\DeclarePairedDelimiter{\abs}{\lvert}{\rvert}
\DeclarePairedDelimiter{\ceil}{\lceil}{\rceil}
\DeclareMathSymbol{\widetildesym}{\mathord}{largesymbols}{"65}
\newcommand{\N}{\mathbb{N}}
\newcommand{\R}{\mathbb{R}}
\newcommand{\C}{\mathbb{C}}
\newcommand{\E}{\mathbb{E}}
\newcommand{\Z}{Z_{n,[w]}}
\newcommand{\Y}{Y_{n,[w]}}
\DeclareMathOperator{\arccosh}{cosh^{-1}}
\newtheorem*{rep@theorem}{\rep@title}
\newcommand{\newreptheorem}[2]{%
\newenvironment{rep#1}[1]{%
 \def\rep@title{#2 \ref{##1}}%
 \begin{rep@theorem}}%
 {\end{rep@theorem}}}
\def\blfootnote{\xdef\@thefnmark{}\@footnotetext}
\newtheorem{theorem}{Theorem}[section]
\newtheorem{defi}{Definition}
\newtheorem{prop}[theorem]{Proposition}
\newtheorem{lemma}[theorem]{Lemma}
\newtheorem{claim}{Claim}
\title{The length spectrum of random hyperbolic 3-manifolds}
\author{Anna Roig-Sanchis}
\address{Sorbonne Université and Université Paris Cité, CNRS, IMJ-PRG, F-75005 Paris, France}
\date{\today}
\email{\href{mailto:anna.roig-sanchis@imj-prg.fr}{anna.roig-sanchis@imj-prg.fr}}
\begin{document}

\maketitle

\begin{abstract}
We study the length spectrum of a model of random hyperbolic 3-manifolds introduced in \cite{Bram_Jean}. These are compact manifolds with boundary constructed by randomly gluing truncated tetrahedra along their faces. We prove that, as the volume tends to infinity, their length spectrum converge in distribution to a Poisson point process on $\mathbb{R}_{\geq0}$, with computable intensity $\lambda$. 
\end{abstract}

\section{Introduction}

Perelman's proof of Thurston's geometrisation conjecture \cite{Thurston}, together with other important results such as the surface subgroup theorem \cite{SurfGroupThm}, the virtual Haken theorem \cite{HakenAgol} or the ending lamination theorem \cite{MinskyModel, BrockCanaryMinskyModel}, led to remarkable progress in the study of hyperbolic 3-manifolds, the wildest class among all geometric 3-manifolds.

Even so, there are still plenty of open questions in the field. Many of them are about the asymptotic behaviour of geometric invariants of these manifolds, such as the lengths of their geodesics or the spectral gap (see for instance \cite[Conjecture 1.5]{MageeThomas}).

A way of tackling these problems concerning asymptotic, extremal or "typical" behaviours is by using probabilistic techniques. 
This method have been proven to be very successful in areas like graph theory, and more recently also in hyperbolic geometry (see for instance \cite{ParlierHypGeo, LubotzkyHypGeo, BudCurBram, BrooksMakover, MageeNaudPuder, tetra, Mirzakhani, HideMagee, AnantharamanMonk, LipnowskiWright, WuXue}).

In our case, this probabilistic approach consists in the study of random manifolds. The model of random 3-manifolds we consider for this article is the one called \textit{random triangulations} \cite{Bram_Jean}. In this model, a random 3-manifold $M_n$ is constructed by randomly gluing together $n$ truncated tetrahedra along their hexagonal faces, resulting in a compact, oriented 3-manifold with boundary. By Moise \cite{Moise}, one has that, as $n$ tends to infinity, every compact 3-manifold with boundary gets sampled by this model.
Moreover, it turns out that asymptotically almost surely (a.a.s) these manifolds are hyperbolic  \cite{Bram_Jean}, which makes it a suitable model for studying properties of hyperbolic 3-manifolds.

The purpose of this article is to study the typical behaviour of the \textit{primitive length spectrum} -the (multi-)set of lengths of all primitive closed geodesics- of a random hyperbolic 3-manifold $M_n$ under the model of random triangulations.

\subsection{Main result} 
The main result of the article is then the following:
\begin{theorem} \label{Poisson} 
As $n\rightarrow\infty$, the primitive length spectrum of a random compact hyperbolic 3-manifold with boundary $M_n$ converges in distribution to a Poisson point process (PPP), of computable intensity $\lambda$.
\end{theorem}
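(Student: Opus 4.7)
The plan is to establish convergence to a Poisson point process via the method of factorial moments. Writing $N_n(I)$ for the number of primitive closed geodesics of $M_n$ with length in a bounded Borel set $I \subset \R_{\geq 0}$, by a standard characterisation of simple point processes it is enough to prove, for any finite collection of pairwise disjoint bounded intervals $I_1, \ldots, I_k$ and positive integers $j_1, \ldots, j_k$, that
\begin{equation*}
\E\bigl[(N_n(I_1))_{j_1} \cdots (N_n(I_k))_{j_k}\bigr] \longrightarrow \prod_{i=1}^{k} \left(\int_{I_i} \lambda(x)\,dx\right)^{j_i},
\end{equation*}
where $(m)_j := m(m-1)\cdots(m-j+1)$ denotes the falling factorial.

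The first step is a combinatorial parametrisation of closed geodesics. A closed geodesic traverses a cyclic sequence of truncated tetrahedra, entering and leaving through hexagonal faces, and the full data of this sequence together with the rotational parameters of the face-identifications forms a closed combinatorial loop --- call it a \emph{type} $\tau$ --- in the dual complex of the triangulation. Once $\tau$ is fixed, the hyperbolic length $\ell(\tau)$ is determined by composing explicit isometries of $\mathbb{H}^3$ that transport a frame across each truncated tetrahedron along the loop. The first moment then factors as
\begin{equation*}
\E[N_n(I)] = \sum_{\tau} \mathbb{P}\bigl[\tau \text{ is realised in } M_n\bigr]\cdot \mathbf{1}_{\ell(\tau) \in I},
\end{equation*}
where the realisation probability (coming from the uniform random matching of faces and rotations) decays geometrically in the number $k(\tau)$ of tetrahedra used. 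One then argues that the dominant contribution comes from \emph{simple} types (no repeated tetrahedron, loop with no self-intersection): the combinatorial count of types with $k$ tetrahedra is bounded by a polynomial times a geometric factor, which for non-simple types is strictly outweighed by the extra decay of the realisation probability, while for simple types it balances out to give, in the limit, the integral $\int_I \lambda(x)\,dx$ with $\lambda$ computable as an explicit series over isomorphism classes of simple types, weighted by the push-forward of their realisation probability under $\ell$.

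For the higher factorial moments, the sum runs over ordered tuples of distinct types, and the dominant contribution arises from tuples that are pairwise \emph{disjoint} in the tetrahedra and face-identifications they use; the realisation probabilities then factor asymptotically, yielding the product form above. The main obstacle, as in the parallel surface results of Mirzakhani--Petri and subsequent works, is twofold: \emph{(i)} carrying out the combinatorial enumeration tightly enough that all overlapping or non-simple contributions are seen to have vanishing mean as $n \to \infty$ --- this requires a careful count of closed loops in a uniform random face-matching stratified by combinatorial complexity; and \emph{(ii)} establishing a non-degeneracy statement for the trigonometric length function $\tau \mapsto \ell(\tau)$, so that the weighted sum of point masses at $\{\ell(\tau)\}$ has an absolutely continuous limit on $\R_{\geq 0}$ with computable density. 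Once both are in place, the factorial-moment criterion delivers the PPP convergence with the announced intensity.
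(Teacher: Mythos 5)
Your proposed direct combinatorial parametrisation of geodesics on $M_n$ has a fundamental gap: unlike the octahedral model, the hyperbolic metric with totally geodesic boundary on the compact manifold $M_n$ is \emph{not} obtained by endowing each truncated tetrahedron with an explicit hyperbolic metric and gluing isometrically. Its existence is guaranteed abstractly (via Andreev/Thurston, and in the random setting via \cite[Theorem 2.1]{Bram_Jean}), but there are no ``explicit isometries of $\mathbb{H}^3$ that transport a frame across each truncated tetrahedron'' whose composition would give the length of a loop type $\tau$. Your $\ell(\tau)$ is therefore not a well-defined or computable function of the combinatorial data on $M_n$. This is precisely why the paper runs a two-step argument: it first proves the Poisson convergence (Theorem \ref{PoissonYn}, Theorem \ref{cyclesYn}) for the auxiliary cusped model $Y_n$, where each block is an ideal right-angled regular octahedron and the lengths \emph{are} given explicitly as translation lengths of products of the nine matrices $S\theta^j, R\theta^j, L\theta^j$; then it shows (Proposition \ref{length_gammas}, Lemma \ref{homotopy}) that under Dehn filling $Y_n \to M_n$ short closed geodesics persist with nearly the same length and remain pairwise non-homotopic, which transfers the result. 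Without this detour through $Y_n$ and the drilling/filling control (Andreev plus Futer--Purcell--Schleimer), step one of your outline cannot be executed.

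There is a second, independent error in your step \emph{(ii)}: the limiting intensity measure is not absolutely continuous. Since there are only countably many word classes and each determines a fixed length, the intensity is an atomic measure, $\lambda([a,b]) = \sum_{[w]\in\mathcal{W}_{[a,b]}} \lambda_{[w]}$ with $\lambda_{[w]} = \tfrac{|[w]|}{2|w| 3^{|w|}}$, a finite sum of point masses. The key technical input is not a non-degeneracy statement for $\ell$ but rather the growth estimate Proposition \ref{lgrow}, which guarantees that $\mathcal{W}_{[a,b]}$ is finite, so that each bounded interval receives mass from only finitely many word classes. Asking for a density $\lambda(x)$ with $\int_I \lambda(x)\,dx$ is the wrong target and the claimed ``absolutely continuous limit'' would not hold.

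Where your outline does align with the paper is in the combinatorial moment computation on the \emph{graph} level: the reduction of factorial moments to counting disjoint vs. overlapping tuples of cycles in a random $4$-regular configuration graph, with overlapping tuples contributing $O(n^{-1})$ because they have more edges than vertices, is essentially the argument of Theorem \ref{cyclesYn} (following Bollob\'as). But in the paper this is carried out for the dual graph $G_{Y_n}$, where the correspondence between cycles and closed geodesics with explicit lengths is actually valid, not for $M_n$.
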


The theorem gives a concrete characterisation of the behaviour of the length spectrum of these random hyperbolic 3-manifolds $M_n$. Moreover, since the distribution of a Poisson random variable has an explicit formula, and we have an explicit expression for its mean, this result also tells us that it is possible to compute -with the help of a computer- the limits of the probability that a measurable subset $[a,b]\subset \mathbb{R}_{>0}$ contains $k$ points, for any $k\in\N$ and $a,b>0$ fixed. A more precise statement of the theorem with the defined intensity can be found in Section \ref{statement}.

\subsection{Structure and ideas of the proof}
The structure of the proof of Theorem \ref{Poisson} has some common points with the one of the proof of hyperbolicity of the manifolds $M_n$ \cite[Theorem 2.1]{Bram_Jean}. There, they first construct a specific model of non-compact hyperbolic manifolds -made out of a gluing of hyperbolic ideal right-angled octahedra- named $Y_n$, and observe that these can be transformed to the manifolds $M_n$ via Dehn filling. Then, they prove that after this compactification process, the resulting manifolds $M_n$ are still hyperbolic, using as main tools Andreev's theorem \cite{Andreev} and a result of Futer-Purcell-Schleimer \cite{Futer_Purcell}.

The point here is that the geometry of these non-compact random manifolds $Y_n$ is much better understood -by construction of the manifolds and their hyperbolic metric- than the one of the manifolds $M_n$, so it is easier to study their geometric properties. 

Therefore, in order to prove Theorem \ref{Poisson} regarding the length spectrum of $M_n$, we will follow a similar strategy: first, we will prove the result for these manifolds $Y_n$, and then we will see that after the Dehn filling, the result is still true for the compactified manifolds, that is, the $M_n$.  

The main idea behind the proof of this first part is to translate the problem of counting closed geodesics in $Y_n$ to the problem of counting certain cycles in the dual graph of this complex, which is a random 4-regular graph. Then, the asymptotic behaviour of the expected number of these cycles is given by Theorem \ref{cyclesYn}, using the method of moments. To prove this part it is also essential a technical result concerning the growth of the translation length in terms of the word length of the cycles in the graph (Proposition \ref{lgrow}).

On the other hand, the second part of the proof of Theorem \ref{Poisson} comes down to showing two points: the first and principal, is that the length of the curves after the Dehn fillings of the cusps stays roughtly the same (given by Proposition \ref{length_gammas}). For this, we'll rely as well on Andreev's theorem \cite{Andreev} and Futer-Purcell-Schleimer \cite{Futer_Purcell}, although in a different way. 
And secondly, that closed geodesics don't collapse after the compactification (given by Lemma \ref{homotopy}). Both results -as well as some other statements in the article- are proved to hold asymptotically almost surely (a.a.s), which means with probability tending to 1 as $n\rightarrow\infty$.

Finally, we note that one could also try to carry out a proof for this result using 
the Chen–Stein method for Poisson approximation \cite{PoisAprox, SteinMethod} -instead of the method of moments- which might provide explicit errors terms in the approximation.

\subsection{Organisation}
In Section 2, we recall the probabilistic model of random 3-manifolds introduced in \cite{Bram_Jean}. Then, Section 3 is devoted to the proof of the first step of Theorem \ref{Poisson}. Thus, first, we present the model of hyperbolic manifolds $Y_n$ for which we show the Poisson convergence, and then, we enter into the proof of this intermediary result. Finally, Section 4 contains the proof of the second step of Theorem \ref{Poisson}. We explain first how one passes from the model $Y_n$ to the model $M_n$, and then we continue with the proof of the two points mentioned in the previous section, finishing with the proof of Theorem \ref{Poisson}.

\subsection{Notes and references}
Several models of random hyperbolic surfaces were developed by Mirzakhani \cite{Mirzakhani}, Brooks-Makover \cite{BrooksMakover}, Guth-Parlier-Youg \cite{ParlierHypGeo}, Budzinski-Curien-Petri \cite{BudCurBram} and Magee-Naud-Puder \cite{MageeNaudPuder}. In the case of 3 dimensions, there are two other well known models, called random Heegaard splittings and random mapping tori, that were both introduced by Dunfield and Thurston \cite{tetra}.

The former model is probably the most studied one, and is obtained from a Heegaard splitting, that is, by gluing together two copies of a handlebody $H_g$ of genus $g$ along a "random" orientation preserving diffeomorphism $f$ of the boundary $\partial H_g = \Sigma$. Since the manifold depends only on the isotopy class of $f$, it is well defined for the mapping class $[f]\in \ \textrm{Mod}(\Sigma)$. Thus, a random Heegard splitting is one such that the mapping class is taken at random by doing a random walk in the mapping class group Mod$(\Sigma)$. Note, that this model samples only manifolds of bounded Heegaard genus.
Considerable work has been done in the study of geometric invariants of this model, such as the spectrum of the Laplacian \cite{UrsulaLap}, the growth of the diameter and injectivity radius \cite{FellerDiam}, or that of their volume \cite{GabrieleVol} -providing an answer to the volume conjecture of Dunfield and Thurston \cite[Conjecture 2.11]{tetra}-. These often used combinatorial models similar to those that are behind the proof of the ending lamination conjecture \cite{MinskyModel, BrockCanaryMinskyModel}.

\subsection{Acknowledgements} I would like to thank first my PhD advisor Bram Petri, for his help and patience when discussing the details of this article, and his time spent reading its multiple earlier drafts. I would also like to thank my lab colleagues Pietro Mesquita Piccione and Joaquín Lejtreger for very useful discussions.

\section{Random manifolds: The probabilistic model} \label{modelM_n}

In this section, we explain the model of random hyperbolic 3-manifolds used for our results in this paper. This probabilistic model, called \textit{random triangulations}, is an analogue in three dimensions of Brooks and Makover's model for random surfaces \cite{BrooksMakover}. 

The general idea is to construct manifolds by randomly gluing polyhedra together along their faces. A priori, it seems that the natural choice for creating this complex would be the 3-simplex, i.e., the tetrahedron. However, as shown in \cite[Proposition 2.8]{tetra}, as the number of tetrahedra tends to infinity, the probability that the resulting complex is a manifold tends to 0. This is due to the fact that the neighbourhoods of the vertices of such complexes are not typically homeomorphic to $\R^3$. 

This problem can be solved, however, by truncating the tetrahedra at their vertices (see Figure \ref{truncated1}). Then, the complex we obtain by gluing $n$ of these polytopes along their hexagonal faces, namely $N_n$, is a compact 3-manifold with boundary.

The random aspect of the construction comes from the gluing. The $4n$ hexagonal faces are partitioned into pairs uniformly at random, and for each pair, one of the three cyclic-order-reversing gluings is also chosen with respect to this measure.

\begin{figure}[H]
    \centering
    \includegraphics[scale = 0.4]{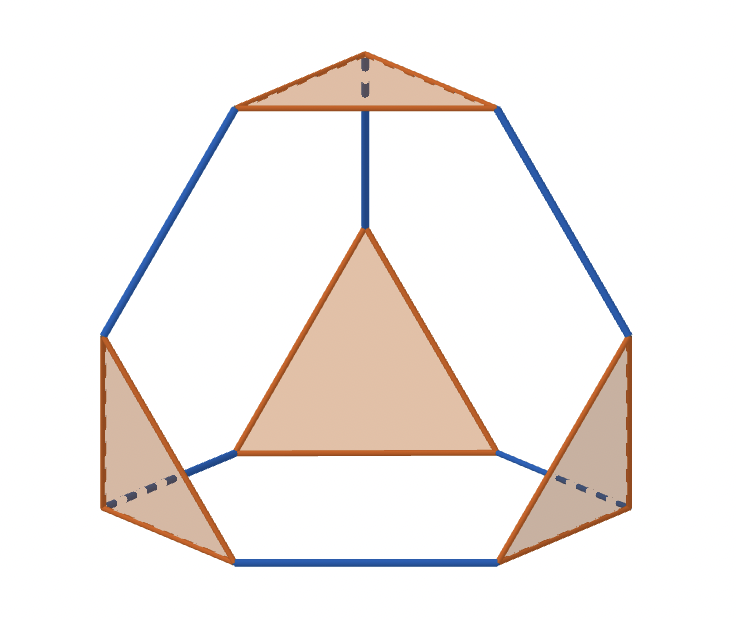} 
    \caption{A truncated tetrahedron. The orange faces are the boundary faces, and the white ones the interior faces. $N_n$ is obtained by gluing these polyhedra along the interior (hexagonal) faces.}
    \label{truncated1}
\end{figure}

One can deduce from classical work by Moise \cite{Moise} that every compact  orientable 3-manifold with boundary can be obtained with this construction. In other words, every such manifold gets, as $n$ goes to infinity, sampled by this model.

Furthermore, we can obtain random closed manifolds from $N_n$: it suffices to take two disjoint copies of $N_n$ and glue their boundaries together using the identity map. Note, however, that these will have additional symmetries, hence do not represent "typical" closed 3-manifolds. The resulting random manifold will be denoted by $DN_n$. 

The dual graph of the complex $N_n$, that is, the graph obtained by considering a vertex in each tetrahedron of $N_n$ and joining them with an edge whenever they have a face in common, is a random 4-regular graph. The construction of this graph follows exactly the one described by the configuration model -a well-known model for random graphs- which consists generally in these two following steps: 
\begin{enumerate}
    \item We consider n vertices. We assign a degree $k_i$ to each vertex, which is represented as $k_i$ half-edges. 
    \item We choose two half edges uniformly at random and connect them to form an edge. The same procedure is repeated with the remaining ones until all of them are used. 
\end{enumerate}

An important observation is that, in this model, the expected number of loops and multi-edges converge to Poisson random variables with parameters $\frac{3}{2}$ and $\frac{9}{4}$ respectively. Thus, for any property P that holds a.a.s. for $N_n$ (i.e, $\mathbb{P}[ N_n \ \textrm{has} \ P] \rightarrow 1$ as $n\rightarrow \infty$), we have:
\begin{align*}
    \mathbb{P}[ N_n \ \textrm{doesn't have} \ P &| \ \{\textrm{its dual graph is simple}\}] \\ &=
    \frac{\mathbb{P}[ N_n \ \textrm{doesn't have} \ P  \ \textrm{and its dual graph is simple}]}{\mathbb{P}[\textrm{Its dual graph is simple}]}  \\ &\leq
    \frac{\mathbb{P}[ N_n \ \textrm{doesn't have} \ P]}{\mathbb{P}[\textrm{There are no loops or multi-edges}]} \xrightarrow{n\rightarrow \infty} 0. 
\end{align*}

Therefore, we can condition our manifold $N_n$ on not having any loops or multi-edges in the dual graph of the tetrahedral complex. Let $M_n$ denote the manifold satisfying it. 
We finish this section by giving the most important geometric property of these manifolds:
\begin{theorem}[\cite{Bram_Jean}, Theorem 2.1] \label{hyperbolic}
    \[\lim_{n\rightarrow \infty} \mathbb{P}[M_n \ \textrm{carries a hyperbolic metric with totally geodesic boundary}] = 1.\]
\end{theorem}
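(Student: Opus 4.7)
My plan is to produce the hyperbolic metric on $M_n$ by first equipping an auxiliary non-compact model $Y_n$ with an explicit hyperbolic structure, and then transferring that structure to $M_n$ through Dehn filling. For $Y_n$ I would replace each truncated tetrahedron in the construction of $M_n$ by a single right-angled ideal hyperbolic octahedron, pairing the four hexagonal interior faces of the tetrahedron with four of the eight ideal triangular faces of the octahedron (the remaining four forming a portion of the ideal totally geodesic boundary of $Y_n$), and performing the same random face pairing as in the construction of $M_n$. Since every dihedral angle of a right-angled ideal octahedron equals $\pi/2$ and the dual graph of the complex is $4$-regular, the local hyperbolic structures glue across interior edges to a smooth complete metric, so $Y_n$ is, for every realization of the random gluing, a finite-volume cusped hyperbolic $3$-manifold with totally geodesic ideal boundary.

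Next I would identify $M_n$ topologically with a Dehn filling of $Y_n$ along a canonical family of slopes. Each cusp torus of $Y_n$ corresponds to an ideal vertex of an octahedron, hence to a triangular truncation circle in the corresponding tetrahedron, and filling along the meridian prescribed by that triangle reproduces the boundary collar of the truncated vertex in $M_n$. This identification is entirely combinatorial and deterministic once the random gluing has been fixed, so the whole problem reduces to showing that the Dehn filling preserves hyperbolicity. Andreev's theorem enters at this point to guarantee that the relevant combinatorial polyhedra admit the angle structures required to carry hyperbolic metrics with totally geodesic boundary, and hence that the hyperbolic metric of $Y_n$ and the topological model of $M_n$ as its filling are genuinely compatible.

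The main obstacle is this final step: showing that these Dehn fillings a.a.s.\ yield a hyperbolic metric with totally geodesic boundary. For this I would invoke the quantitative Dehn filling theorem of Futer--Purcell--Schleimer, which ensures that filling preserves hyperbolicity -- along with good geometric control on the filled cores and the boundary surface -- as soon as the normalized length of every filling slope exceeds a universal constant. At a given cusp of $Y_n$, the normalized slope length is determined by the local geometry of the maximal horoball around the corresponding ideal vertex, which in turn depends only on the local combinatorics of the dual graph around that vertex. This reduces the geometric estimate to a local statistic of the configuration model, and a first-moment computation should show that the expected number of cusps with short normalized slope is $o(1)$. A union bound then gives that a.a.s.\ every cusp is long enough simultaneously, so applying Futer--Purcell--Schleimer at every cusp produces the desired hyperbolic metric on $M_n$ and proves Theorem \ref{hyperbolic}.
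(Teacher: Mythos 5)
Your overall architecture is exactly the one the cited proof follows: replace the truncated tetrahedra by ideal right-angled octahedra, observe the octahedral complex glues to a complete finite-volume hyperbolic $3$-manifold $Y_n$ with totally geodesic boundary because all dihedral angles are $\pi/2$, identify $M_n$ with a Dehn filling of $Y_n$, and then control the filling geometrically. That part is correct and matches the paper's description of the Bram--Petri argument.

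The gap is in the final step, where you propose a first-moment computation showing that the expected number of cusps with short normalized slope is $o(1)$ followed by a union bound, so that Futer--Purcell--Schleimer applies to every cusp. That first-moment claim is false. In the configuration model the expected number of short edges of the octahedral complex -- equivalently, cusps surrounded by a bounded number of octahedra -- converges to a positive constant, not to zero; conditioning away loops and bigons only removes the very shortest ones. So with probability bounded away from zero, $Y_n$ has cusps whose normalized slope length is $O(1)$, and the total normalized length in the Futer--Purcell--Schleimer hypothesis (which satisfies $1/L^2 = \sum_j 1/L_j^2$) is then bounded above, so the theorem simply does not apply to the collection of all cusps. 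You also misassign the role of Andreev's theorem: it is not a general compatibility lemma in this argument but precisely the tool used to handle those short cusps for which the quantitative Dehn filling theorem cannot be invoked. The paper's strategy (visible in Section~4, which mirrors the cited proof) is a three-scale argument: fill the \emph{small} cusps (combinatorial length up to $\tfrac{1}{8}\log_3 n$) via Andreev's theorem, then fill the \emph{medium} cusps and finally the \emph{large} cusps in two further passes using Futer--Purcell--Schleimer, because even after removing the small cusps the total normalized length of all remaining cusps simultaneously is not large enough; the medium and large regimes must be separated so that the $L^2$ threshold is met at each stage. Without that stratification, the argument as you have written it breaks precisely at the cusps that exist a.a.s.
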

Note that this condition on the dual graph of $M_n$ is needed for the proof of this theorem. Hence, from now on, \textit{these} are the manifolds we're gonna work with.

Mostow's rigidity theorem tells us that, in this case, this metric is unique up to isometry. This implies, in particular, that geometric invariants of $M_n$ like the length spectrum become topological invariants, and so can be understood from the combinatorics of the gluing. This will be the strategy for the proof of the next theorem in Section \ref{sectionY_n}.

\section{Proof step 1: The length spectrum of $Y_n$} \label{sectionY_n}

In this section, we explain first the random model of non-compact hyperbolic 3-manifolds needed for the first step of the proof. Moreover, we describe how the combinatorics of the model gives us information about its curves. After that, we prove a version of Theorem \ref{Poisson} for these manifolds, which we reformulate in more geometric terms as shown next.

We encode the length spectrum of $Y_n$ by the following counting function: for any $a, b>0$, we define:
    \[ C_{[a,b]}(Y_n) \coloneqq \# \{\textrm{primitive closed geodesics of length $\in [a, b]$ on $Y_n$} \},\]
where $C_{[a,b]}(Y_n): \Omega_n \rightarrow \mathbb{R}$ is a random variable, since $Y_n$ is a random manifold.
Then, we prove the following statement, equivalent to the convergence to a PPP.
\begin{theorem} \label{PoissonYn} 
For any finite collection of disjoint intervals $[a_1,b_1], \ldots, [a_t,b_t] \subset \R_{\geq 0}$ , the random vector $(C_{[a_1,b_1]}(Y_n), \ldots, C_{[a_t,b_t]}(Y_n))$
converges jointly in distribution, as $n\rightarrow\infty$, to a vector of independent random variables $$(\mathcal{C}_{[a_1,b_1]}, \ldots, \mathcal{C}_{[a_t,b_t]}),$$ where $\forall i=1,\ldots,t$, $\mathcal{C}_{[a_i,b_i]}$ is Poisson distributed with parameter $\lambda = \lambda([a_i,b_i]) > 0$.

\end{theorem}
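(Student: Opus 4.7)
The plan is to reduce the geometric problem to a combinatorial cycle-counting problem in the random $4$-regular dual graph, and then apply the method of moments. Since $Y_n$ is obtained by randomly gluing a fixed collection of hyperbolic pieces (ideal right-angled octahedra) whose isometry type and dihedral angle data are fixed, its geometry is encoded by the combinatorics of the gluing. Each primitive closed geodesic $\gamma \subset Y_n$ lifts to a primitive closed path in the dual graph that records the sequence of pieces $\gamma$ traverses, together with the local face-pairing data at each crossing. I would first make this correspondence precise: to every primitive closed geodesic one associates an equivalence class (up to cyclic rotation and inversion) of non-backtracking cycles in the dual graph decorated with local combinatorial type, and conversely, any such cycle of appropriate type is realized by a unique primitive closed geodesic.

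Given this dictionary, the control of geometric length by combinatorial length is provided by Proposition \ref{lgrow}: the translation length of the geodesic corresponding to a cycle grows (at least linearly) with the word length. In particular, for any fixed interval $[a,b]$ there is a threshold $K = K([a,b])$ such that only decorated cycles of word length $\leq K$ can produce a geodesic of length in $[a,b]$. Consequently $C_{[a,b]}(Y_n)$ is, up to a combinatorial repackaging, a finite sum of counts of bounded-length decorated cycles in a configuration-model random $4$-regular graph, weighted by the indicator that the explicitly computable geometric length of the associated geodesic lies in $[a,b]$.

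At this point the proof becomes standard: compute the factorial moments. For each $r \geq 1$, I would expand $\E[(C_{[a,b]}(Y_n))_r]$ as a sum over $r$-tuples of distinct decorated cycles $(\sigma_1, \ldots, \sigma_r)$, and use the configuration-model computation of the probability that a prescribed set of half-edge pairings is realized. The dominant contribution comes from vertex-disjoint tuples of cycles, whose expected count factorizes and converges to $\lambda([a,b])^r$, where $\lambda([a,b])$ is the limiting expected number of short decorated cycles producing a geodesic of length in $[a,b]$; configurations in which two cycles share a vertex or an edge contribute $o(1)$, by standard bounds on subgraph counts in the configuration model. This is essentially the content of Theorem \ref{cyclesYn}. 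Since the factorial moments of a Poisson$(\lambda)$ random variable are exactly $\lambda^r$, and all moments determine a Poisson, one obtains $C_{[a,b]}(Y_n) \Rightarrow \mathrm{Pois}(\lambda([a,b]))$.

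For the joint convergence over disjoint intervals $[a_1,b_1], \ldots, [a_t,b_t]$, I would run the same argument on the mixed factorial moments $\E\bigl[\prod_{i=1}^t (C_{[a_i,b_i]}(Y_n))_{r_i}\bigr]$: again the dominant contribution comes from $(r_1+\cdots+r_t)$-tuples of pairwise vertex-disjoint decorated cycles, whose probability of being realized factorizes in the limit. Since the intervals are disjoint, cycles contributing to different counts are distinct (their deterministic geometric lengths lie in disjoint sets), so the joint moments converge to $\prod_i \lambda([a_i,b_i])^{r_i}$, matching independent Poissons. The main obstacle I expect is the careful execution of the first step, namely establishing the clean dictionary between primitive closed geodesics and decorated non-backtracking cycles and verifying that the intensity $\lambda$ is finite and explicitly computable from the local data; the Poisson and independence statements are then forced by the configuration-model moment computation combined with Proposition \ref{lgrow}.
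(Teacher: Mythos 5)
Your proposal follows essentially the same route as the paper: translate closed geodesics in $Y_n$ into equivalence classes of decorated closed paths (words) in the random $4$-regular dual graph, use Proposition \ref{lgrow} to ensure only finitely many word classes $\mathcal{W}_{[a,b]}$ can contribute to a given length interval, and then run the method of moments in the configuration model to get convergence to independent Poissons. The only difference is one of packaging: the paper isolates the moment computation as Theorem \ref{cyclesYn} (joint Poisson convergence of the cycle counts $Z_{n,[w]}$) and then obtains $C_{[a,b]}(Y_n)$ as a finite sum $\sum_{[w]\in\mathcal{W}_{[a,b]}} Z_{n,[w]}$ plus an error term, whereas you propose computing the factorial moments of $C_{[a,b]}(Y_n)$ directly; the underlying calculation is identical. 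One detail you slide past: a geodesic in $Y_n$ can homotope to a closed walk in the dual graph that is reduced but not simple (a \emph{circuit} that is not a \emph{cycle}, in the paper's terminology), so the dictionary is not literally with simple cycles. The paper handles this by decomposing $C_{[a,b]}(Y_n)$ into the cycle counts $Z_{n,[w]}$ plus circuit counts $Z_{n,[w]}'$ and showing $\E[Z_{n,[w]}']=O(n^{-1})$ (more edges than vertices); your bound on intersecting pairs of cycles is the right kind of estimate but does not cover this case as stated, so you would need to add the analogous argument for self-intersecting circuits.
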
 

\subsection{Model for hyperbolic 3-manifolds} \label{gluing_oct}
Here we describe the details of the construction presented in \cite[Section 3.1]{Bram_Jean}, and how it is related to the one of Section 1.

The building block for the topological random model $M_n$ was a truncated tetrahedron. Now, observe that if we contract the edges joining the triangular faces of a truncated tetrahedron, we get an octahedron (see Figures \ref{truncated} and \ref{oct}). This will be the building block for $Y_n$.

\begin{figure}[H]
    \centering
    \begin{minipage}{0.45\textwidth}
        \centering
        \includegraphics[width=0.9\textwidth]{Truncated_tetra.png} 
        \caption{A truncated tetrahedron.}
        \label{truncated}
    \end{minipage}\hfill
    \begin{minipage}{0.50\textwidth}
        \centering
        \includegraphics[width=0.9\textwidth]{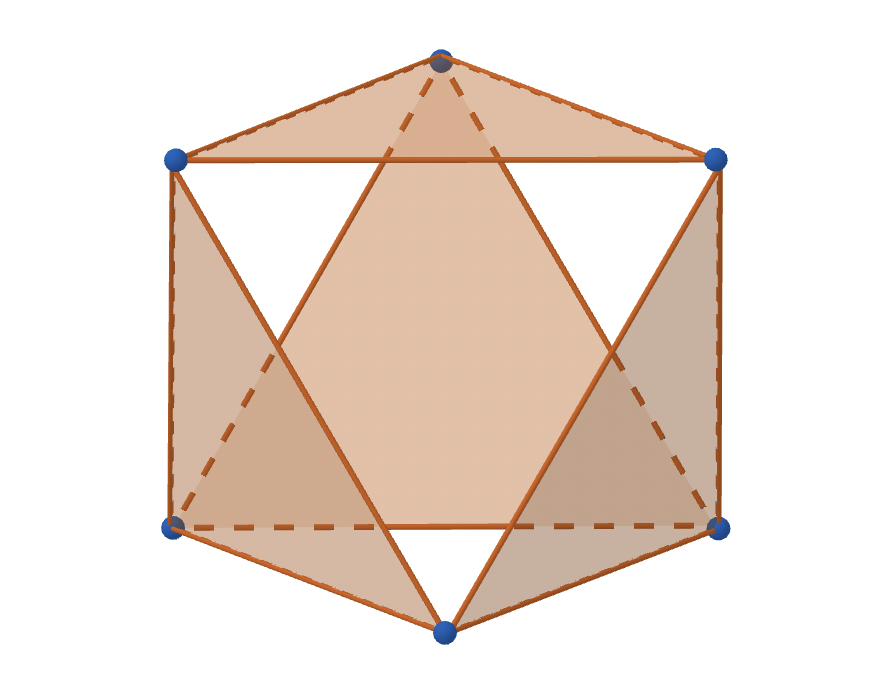}
        \caption{The octahedron resulting from contracting the blue edges of Figure 2}
        \label{oct}
    \end{minipage}
\end{figure}

Now, let's describe the gluing. We consider initially $n\in\N$ copies of a regular octahedron. Then, 
\begin{enumerate}
    \item For each octahedron $O_i$, $i=1, \ldots , n$, we attribute to its vertices a unique label in $\{a^i, b^i, c^i, d^i, e^i, f^i\}$. \\
    We denote the face given by the vertices $v_1, v_2, v_3 \in \{a^i, \ldots, f^i\}$ by a cycle $(v_1 \ v_2 \ v_3)$. The order of the vertices in the cycle determines at the same time an orientation on the face.
    \item We consider four non adjacent faces in each $O_i$, and we partition these $4n$ faces into $2n$ pairs, uniformly at random. We denote this partition by $w_n = (w_n^{(i)})_{i=1}^{2n}$, where $w_n^{(i)} = \{(v_1 \  v_2 \ v_3), (w_1 \ w_2 \ w_3)\}$.
    \item For each pair of faces $w_n^{(i)}$, we choose, again uniformly at random, one of the three cyclic-order-reversing pairings between the vertices. We denote the pairings by $\vartheta_n^{i} = (\vartheta_n^{(i)})_{i=1}^{2n}$, where $\vartheta_n^{(i)} = \{(v_1 \ v_2 \ v_3), (\vartheta_n^{(i)}(v_1)=w_1 \ \vartheta_n^{(i)}(v_2)= w_2 \ \vartheta_N^{(i)}(v_3)= w_3)\}$.
    \item We identify each pair of faces $w_n^{(i)}$ using the pairing of its vertices described by $\vartheta_n^{i}$, for every $i=1,\ldots, 2n$.
\end{enumerate}
The resulting octahedral complex is an oriented compact 3-manifold with boundary. 

Now, when taking out the vertices, this complex admits a hyperbolic metric. Indeed, we can endow each octahedron with the unique (up to isometry) hyperbolic metric of an ideal right-angled regular octahedron. Moreover, we glue them through isometries, so this metric extends nicely to the whole complex. Therefore, after endowing all octahedra in the complex with this hyperbolic metric, we obtain a \textit{complete finite-volume hyperbolic 3-manifold with totally geodesic boundary}.  We will denote this by $X_n$, following the same notation as in \cite{Bram_Jean}.

The dual graph of the complex $X_n$, is, as for $M_n$, a random 4-regular graph. 
In the same way, we can condition $X_n$ on not having loops or bigons in its dual graph. \textit{These} are the manifolds we will work with, and will be denoted by $Y_n$.

The probability space $(\Omega_n, \mathbb{P}_n)$ associated to this model is the following: we define $\Omega_n$ to be the finite set of all possibilities of $w_n$ and $\vartheta_n$, and we choose $\mathbb{P}_n$ to be the uniform probability measure on $\Omega_n$. We have that: 
\[\abs{\Omega_n} = \frac{\binom{4n}{2} \binom{4n-2}{2}\ldots\binom{2}{2}}{(2n)!}\cdot 3^{2n} = (4n-1)!! \ 3^{2n} \]
and so the probability of having one specific configuration is: 
\[\mathbb{P}(\{\textrm{a certain partition $w_n$ and pairing $\vartheta_n$}\}) = \frac{1}{\abs{\Omega_n}}. \]

\subsection{Geometry of $Y_n$}
By construction, the manifold $Y_n$ is hyperbolic. We record this information in the following lemma:
\begin{lemma}
The manifold $Y_n$ carries a complete hyperbolic metric of finite volume with totally geodesic boundary.
\end{lemma}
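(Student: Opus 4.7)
The statement essentially records what is already built into the construction of Section \ref{gluing_oct}, so the plan is to verify — by unwinding that construction — that the three claimed properties hold: the metric is hyperbolic, complete and of finite volume, with totally geodesic boundary. The natural starting point is the building block. The ideal right-angled regular octahedron is a standard hyperbolic polyhedron with six ideal vertices, twelve edges all of dihedral angle $\pi/2$, eight congruent totally geodesic ideal triangular faces, and finite hyperbolic volume. This already endows each individual octahedron with a finite-volume hyperbolic structure whose faces are totally geodesic.

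I would then check that the face identifications in step $(3)$ of the construction are realised by hyperbolic isometries: since the octahedron is regular, the eight faces are pairwise congruent regular ideal triangles, and any cyclic-order-reversing bijection of their ideal vertices extends uniquely to an orientation-reversing hyperbolic isometry, so each pairing $\vartheta_n^{(i)}$ is realised by such an isometry and the hyperbolic pieces match along every glued face. The main step is then to rule out edge and vertex singularities. For the edges, I would exploit the combinatorial fact that the four interior faces of an octahedron are pairwise non-adjacent, so every edge of the octahedron lies on exactly one interior face and one boundary face. Following the edge cycle of an edge $e\in O$ in $Y_n$, one crosses the unique interior face of $O$ at $e$ into a second octahedron $O'$, where one immediately meets a boundary face at the identified edge. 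Hence every edge cycle traverses exactly two octahedra and terminates on $\partial Y_n$, with total dihedral angle $\pi/2 + \pi/2 = \pi$. This is precisely the condition that the boundary surface fit together smoothly as a totally geodesic surface, and it simultaneously shows that there are no interior edges at which the $2\pi$ condition could fail.

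For completeness and finite volume, the link of each ideal vertex inherits a Euclidean structure from the Euclidean square links in the individual octahedra (four right angles per incident octahedron meeting at the vertex). These Euclidean pieces assemble into Euclidean surfaces with boundary which serve as cross-sections of complete, finite-area hyperbolic cusps along $\partial Y_n$, so the metric is complete; and finite volume follows because $Y_n$ is made of $n$ ideal right-angled regular octahedra, each of finite hyperbolic volume. The step I expect to be the most delicate is the edge-cycle analysis, since all three claimed properties hinge on the observation that the non-adjacency of the four interior faces forces each edge cycle to visit exactly two octahedra; once that combinatorial fact is in hand, the rest is bookkeeping from the geometry of the ideal right-angled regular octahedron.
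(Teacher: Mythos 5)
Your proof is correct and takes essentially the same approach as the paper, which records this lemma as an immediate consequence of the construction in Section \ref{gluing_oct} (deferring the verification to \cite{Bram_Jean}). You spell out the omitted details---the isometric face-pairings, the bipartite face structure of the octahedron forcing total dihedral angle $\pi$ along each edge cycle, and the Euclidean vertex links giving complete cusps---which is exactly the bookkeeping needed to justify the claim.
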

As such, we can use the following standard facts from hyperbolic geometry to help us study its length spectrum.

First, we have that every element in the set of free homotopy classes of closed curves in $Y_n$ that is neither trivial or homotopic to a cusp, is represented by a unique closed geodesic \cite[Prop 4.1.13]{Martelli}.

Moreover, as it has totally geodesic boundary, we can write $Y_n$ as a quotient $\mathcal{C}(\mathbb{H}^3)/\Gamma$, where $\mathcal{C}(\mathbb{H}^3)$ is a convex domain of $\mathbb{H}^3$ and $\Gamma$ is a discrete torsion-free subgroup of orientation preserving isometries of $\mathbb{H}^3$. It is known that Isom$^{+}(\mathbb{H}^3) \cong \mathrm{PSL}(2,\C)$, so we can think of $\Gamma$ as a subgroup of matrices in $\mathrm{PSL}(2, \mathbb{C})$.
Then, we have that there is a bijection between the conjugacy classes in $\Gamma$ and the free homotopy classes of closed curves \cite[Section 4.1.5]{Martelli}. This means that, given a closed curve $\gamma$, it corresponds to a conjugacy class of matrices $[M_\gamma]$ in $\mathrm{PSL}(2, \mathbb{C})$. 

An important thing about these fact is that from $[M_\gamma]$ one can compute the length of the corresponding geodesic in the homotopy class of $\gamma$. This length is exactly the translation length of $[M_\gamma]$, that is, the distance between $p$ and $M_\gamma(p)$ for any $p\in \textrm{axis}(M_\gamma$) - the geodesic line in $\mathbb{H}^3$ preserved by the isometry -. This is given by: 
\begin{equation} \label{l}
    l_{\gamma}(M_{\gamma}) = 2\mathrm{Re}\bigg[\arccosh \bigg(\frac{\textrm{trace}([M_{\gamma}])}{2}\bigg)\bigg].
\end{equation}

The first goal is then, to try to describe precisely this class $[M_\gamma]$ of $\Gamma$. For that, we will look at the dual graph of $Y_n$.

\subsection{Curves and paths}
The dual graph of $Y_n$, that will be denoted by $G_{Y_n}$, encodes part of the combinatorics of the complex. To completely determine it, we need to include in $G_{Y_n}$ the orientation-reversing gluings of the pair of faces corresponding to the pair of half-edges. Then, from this "enriched" graph, we can get information about the length of the curves of the manifold $Y_n$. Ultimately, we will see that the distribution of the number of closed curves of a fixed length in $Y_n$ can be studied by looking at the distribution of the number of certain closed paths in $G_{Y_n}$.

Let us start by showing how curves in $Y_n$ and paths in $G_{Y_n}$ are related. A first and essential observation is that any curve in $Y_n$ can be homotoped to a path on the dual graph. This homotopy can be done as follows: we cut the curve into pieces, each one corresponding to the part of the curve that enters and leaves exactly once some octahedron along one of the four non-adjacent faces. Then, if this part of the curve enters and leaves through the same face, we homotope it to the middle point of the face. Otherwise, we homotope the entry and exit points to the center of the faces, and all the remaining part of the curve to the graph. Figure \ref{Homotopy_path} shows an example.

Once this process is done, we remove the possible backtracking from the graph, i.e, we remove the edges through which the path goes and turns back in the opposite direction after reaching some vertex. The resulting path is what we call the \textit{reduced path}. From now on, these are the paths we will consider.

\begin{figure}[H] 
    \centering
    \begin{minipage}{0.45\textwidth}
        \centering
        \includegraphics[width=0.9\textwidth]{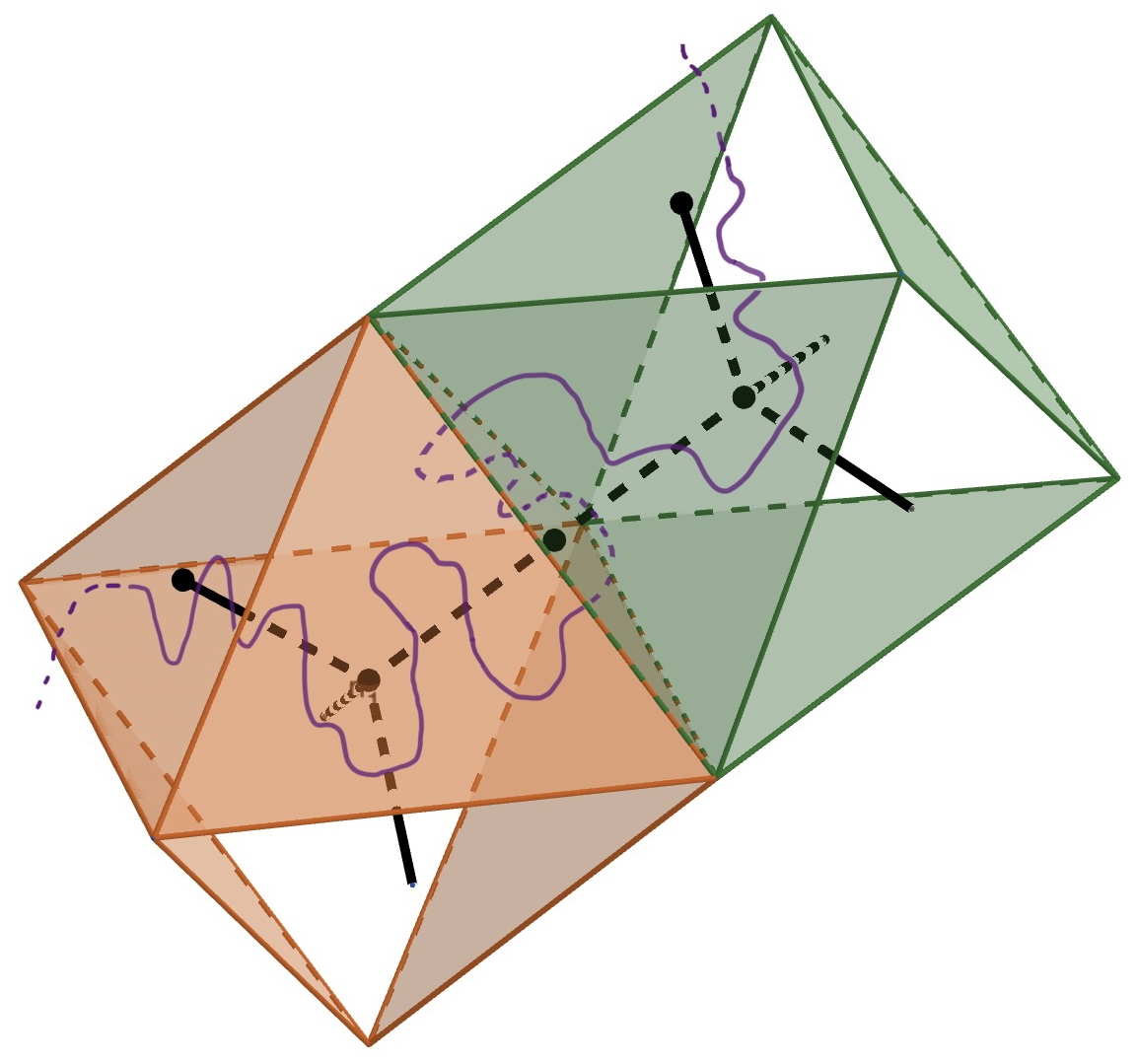} 
    \end{minipage}\hfill
    \begin{minipage}{0.45\textwidth}
        \centering
        \includegraphics[width=0.9\textwidth]{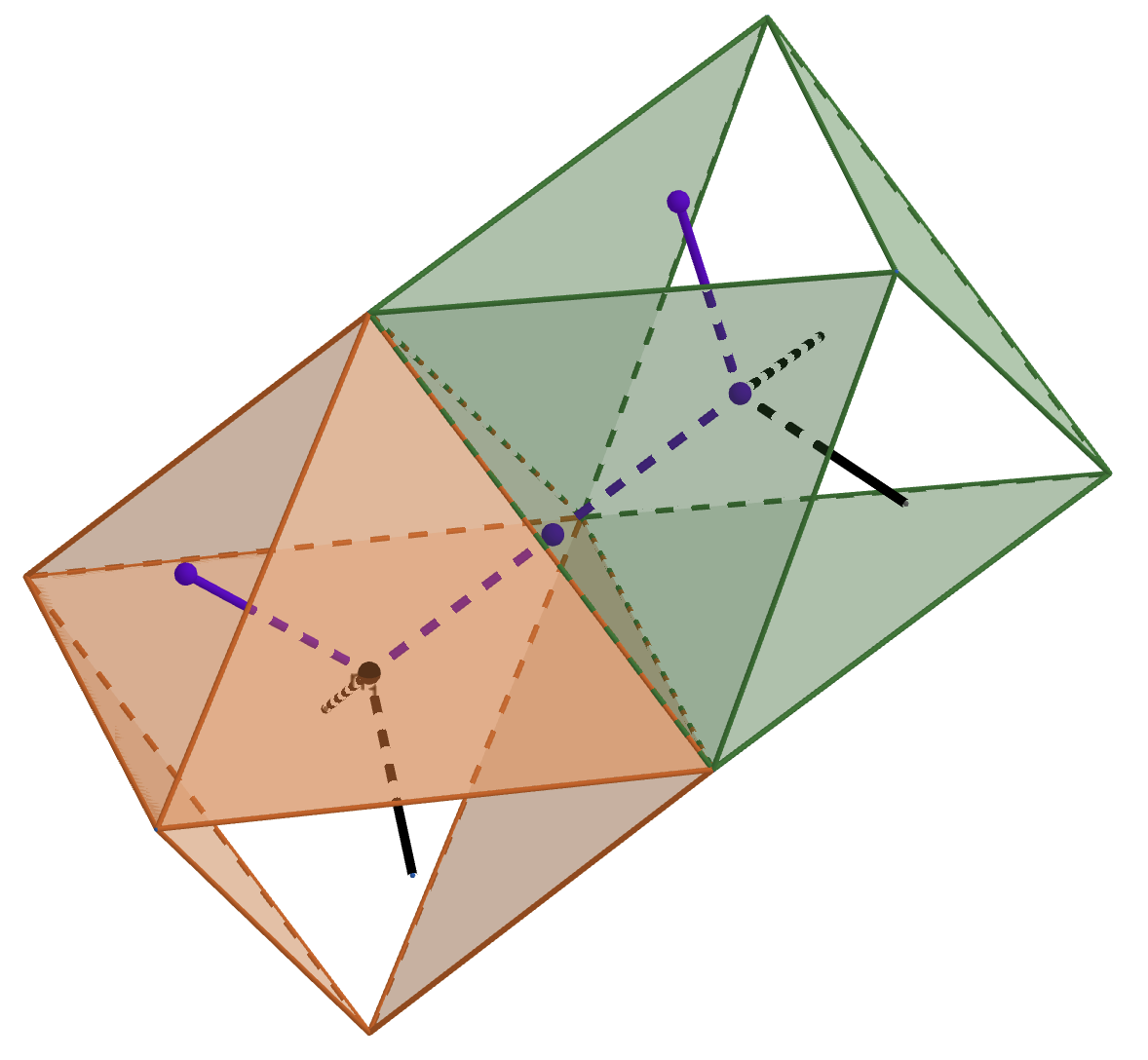}
    \end{minipage}
    \caption{Homotopy of a curve into the dual graph.}
    \label{Homotopy_path}
\end{figure}

Thus, in order to describe a closed curve $\gamma$, we can analyse its reduced path in the dual graph along the octahedra involved.

Now, a closed path of some length $k\in \N $ on this enriched 4-regular graph mention above, can be described by picking a midpoint of an edge as a starting point, and giving a sequence of "movements" $(w_{1}\Theta_1, \ldots, w_{k}\Theta_k)$ that returns to this starting point. Each $w_{i}$ indicates the direction it takes at the $i$-th vertex it encounters, i.e, whether the path is going straight ($S$), turning right ($R$) or turning left ($L$) with respect to the direction of the travel. On the other hand, $\Theta_i$ describes in which of the 3 possible cyclic-order-reversing orientations the two faces of the octahedra are glued.

These movements take on a geometrical meaning when translating our picture to the hyperbolic space. That is, by assigning ideal coordinates in $\mathbb{H}^3$ to the first octahedron, and placing the next ones with the information given by the previous sequence. They, then, correspond to the mapping of one face of an ideal octahedron to some other of its non adjacent faces. These actions are compositions of translations and rotations, and so they are orientation preserving isometries of $\mathbb{H}^3$. As mentioned before, these can be identified with elements of PSL$(2,\C)$.
This group acts on the upper half-space model by Möbius transformations \cite[Theorem 1.8]{Hyp_Kleinian}: 
\[ \begin{bmatrix}
 a & b \\ c & d \end{bmatrix} \cdot z = \frac{az+b}{cz+d}.\]

Therefore, these movements on the graph correspond to some Möbius transformations on the hyperbolic 3-space:
\[S\theta^j, R\theta^j, L\theta^j:\mathbb{H}^3 \rightarrow \mathbb{H}^3, \quad j=0,1,2, \] 
that send a triple of points -realising a face- to another triple in some cyclic order. They are described by these matrices in $PSL(2,\C)$: 
\[ S = \begin{pmatrix} 1 & 1 \\ 0 & 1 \end{pmatrix} \quad \quad R = \begin{pmatrix} -1 & i \\ i-1 & i \end{pmatrix} \quad \quad L = \begin{pmatrix} i & i \\ i+1 & 1 \end{pmatrix} \quad \quad \theta = \begin{pmatrix} 0 & i \\ i & 1 \end{pmatrix}. \]

As there are three possible directions and orientations at each step, we have nine possible isometries:
\begin{align*}
    S\theta^0 &= S = \begin{pmatrix} 1 & 1 \\ 0 & 1 \end{pmatrix} \quad \ \ \quad \quad S\theta = \begin{pmatrix} i & i+1 \\ i & 1 \end{pmatrix} \quad \quad \quad S\theta^2 = \begin{pmatrix} i-1 & i \\ i & 0 \end{pmatrix} \\ 
    R\theta^0 &= R = \begin{pmatrix} -1 & i \\ i-1 & i \end{pmatrix} \quad \quad R\theta = \begin{pmatrix} 1 & 0 \\ 1 & 1 \end{pmatrix} \quad \ \ \ \ \quad \quad R\theta^2 = \begin{pmatrix} 0 & i \\ i & i+1 \end{pmatrix} \\ 
    L\theta^0 &= L = \begin{pmatrix} i & i \\ i+1 & 1 \end{pmatrix} \quad \quad L\theta = \begin{pmatrix} -1 & i-1 \\ -i & i \end{pmatrix} \quad \quad L\theta^2 = \begin{pmatrix} i+1 & 1 \\ 1 & 1-i \end{pmatrix}.
\end{align*}
All these transformations are either parabolic or hyperbolic. More precisely, for each direction, there is one parabolic element - $S$, $R\theta$ and $L\theta^2$ - and two hyperbolic ones.

The product of the elements corresponding to a path (and so a curve $\gamma$) describe the path and it is again an isometry, taking some initial face to the last face the curve goes through. These are the $M_\gamma$ referred to in the previous section.

From now on, we will call \textit{words} these group elements describing curves in the manifold. They will be denoted by:
\[w = w_{1} \Theta_1 \cdot w_{2} \Theta_2 \cdot \ldots \cdot w_{\abs{w}} \Theta_{\abs{w}}\]
where $w_{i} \in \{ S, R, L \}$ and $\Theta_i \in \{Id, \theta , \theta^2\}$ for $i=1, \ldots, \abs{w}$, and $\abs{w}$ is the length of the word, being the number of elements $w_i\Theta_i$ in $w$. For an example, see Figure \ref{Path}.

\begin{figure}[H]
    \centering
    \includegraphics[scale=0.35]{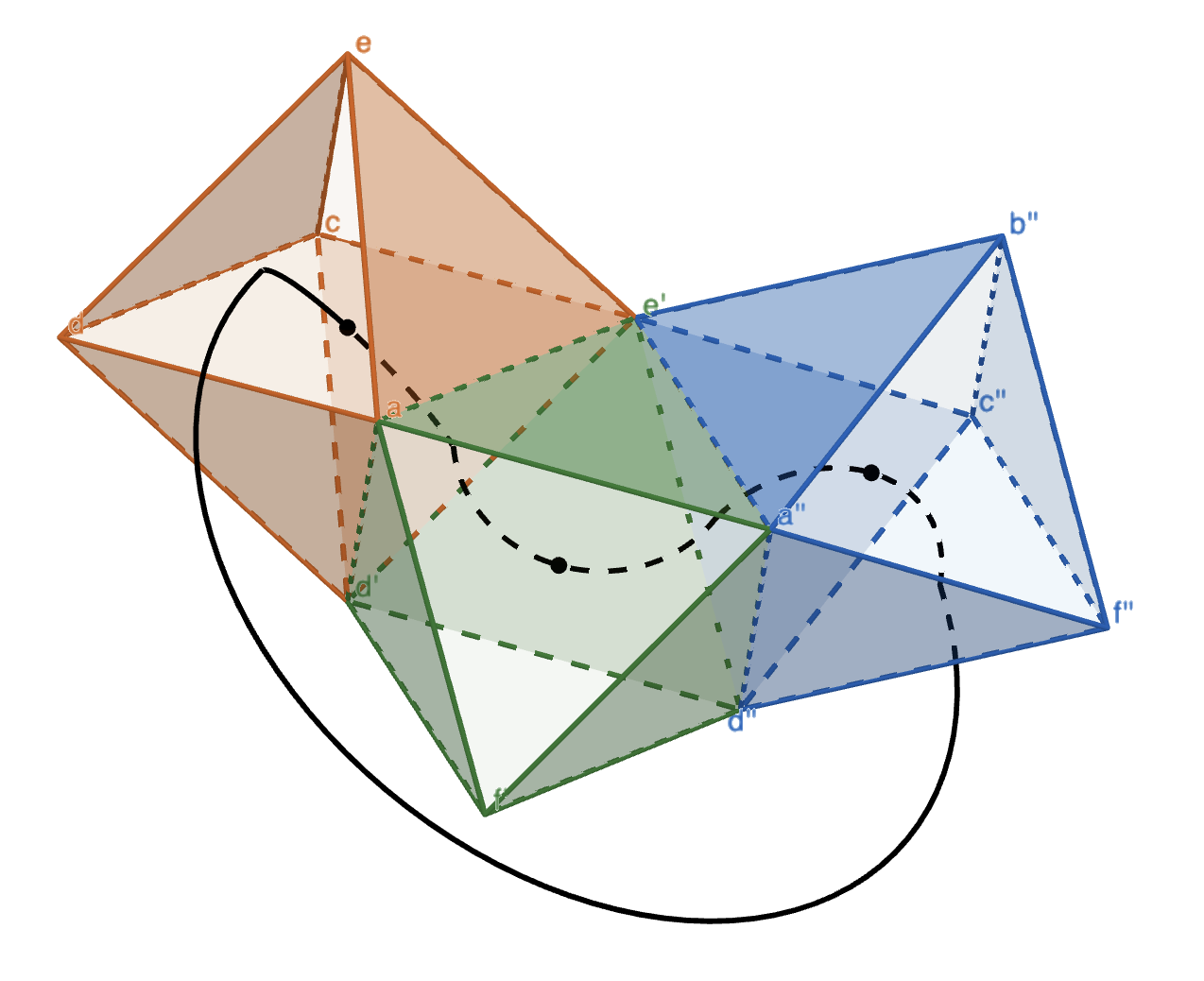}
    \caption{A closed curve in the octahedral complex. If we start at the orange octahedron, an element $w$ describing this curve could be $w = R\theta S L\theta^2$.}
    \label{Path}
\end{figure}

\subsubsection{Words}
In order to compute "the" word corresponding to some path, we need to precise which ideal vertex goes to which. Note that all previous Möbius transformations take the triple of points $(0, i, \infty)$ to some other triple in $\mathbb{H}^3$.
So, in practice, to compute the word, we put every octahedron the curves goes through in the standard position (see Figure \ref{hyp_oct}), and look for the Möbius transformations that sends this triple $(0, i, \infty)$ to the triple of points realising the face of the next octahedron, in a specific order.

\begin{figure}[H]
    \centering
    \begin{minipage}{0.60\textwidth}
        \centering
        \includegraphics[width=0.9\textwidth]{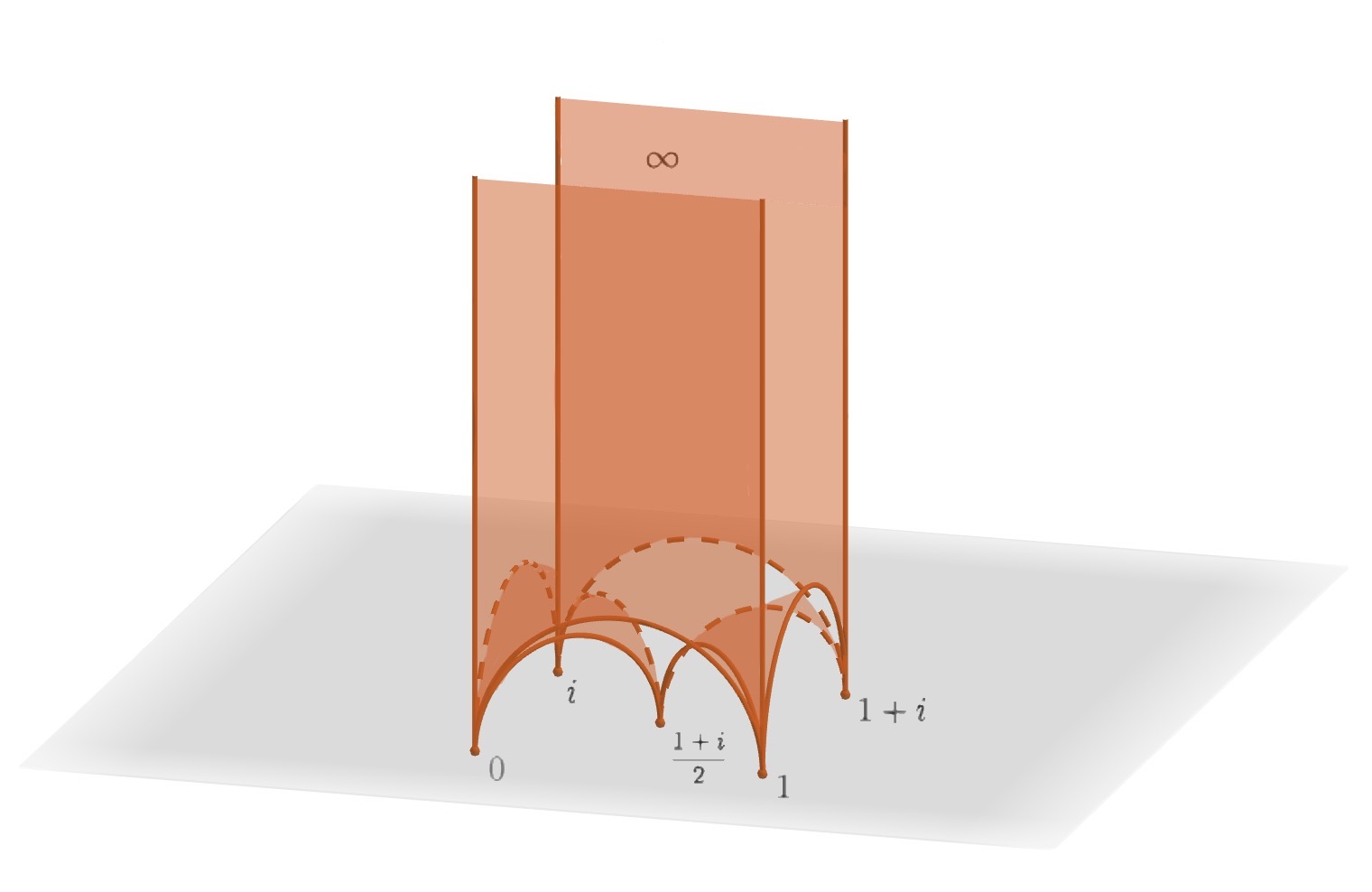} 
    \end{minipage}\hfill
    \begin{minipage}{0.35\textwidth}
        \centering
        \includegraphics[width=0.9\textwidth]{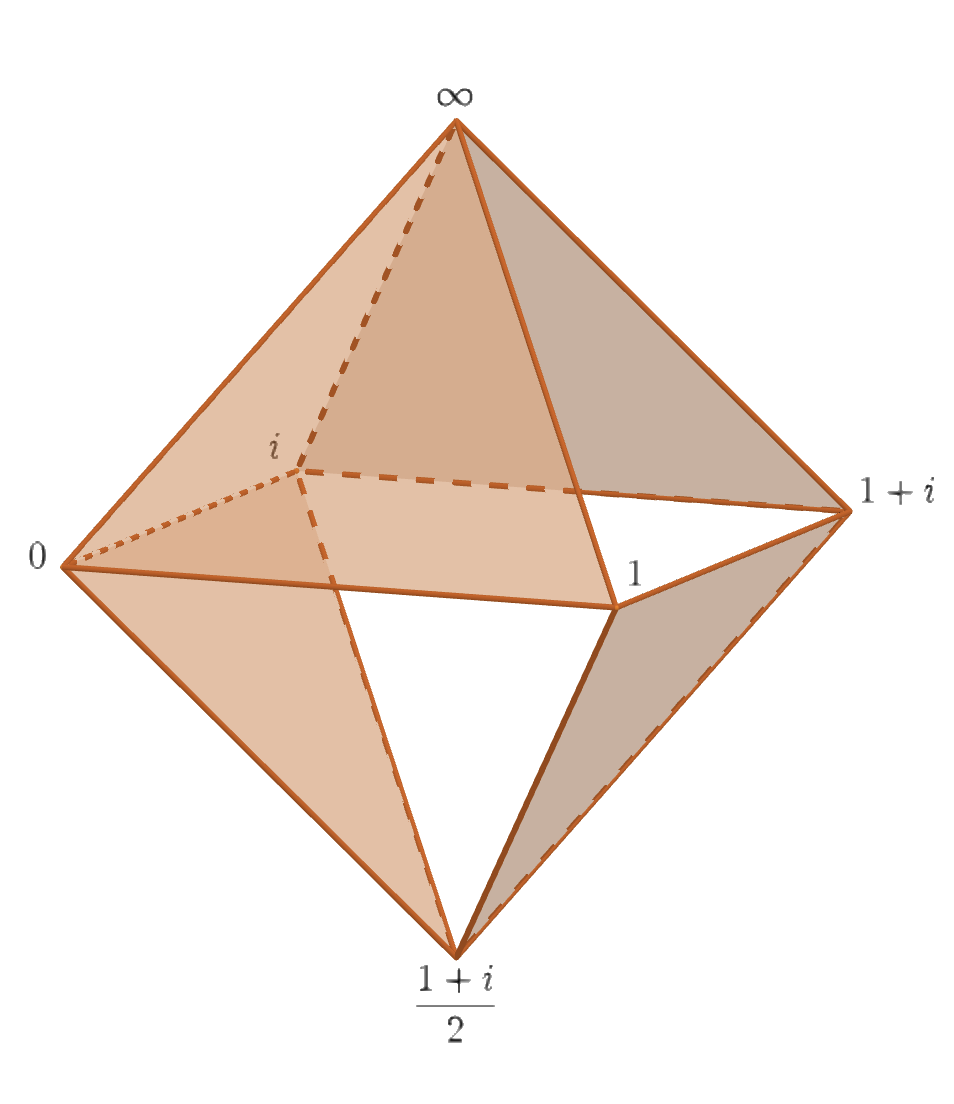}
    \end{minipage}
    \caption{Ideal regular octahedron in the upper half-space model $\mathcal{H}^3$ (left) and its topological image (right).}
    \label{hyp_oct}
\end{figure}

Then, we will call \textit{going straight} when the image of $(0, i, \infty)$ is the tuple $(1, i+1, \infty)$ -transformation carried out by S-, \textit{right} when its image is $(0, \frac{i+1}{2}, 1)$ -realised by R- and \textit{left} when its image is $(i, i+1, \frac{i+1}{2})$ -carried out by L-. On the other hand, the element $\Theta_i$ will precise to whom is each element in $(0, i, \infty)$ sent.

One can observe that, with this procedure, we're changing the way we uniformize $Y_n$ as the quotient of a convex set of $\mathbb{H}^3$ by a Kleinian group $\Gamma$ all the time. Indeed, we're considering  a different conjugate of $\Gamma$ every time we set the next octahedron the curve traverses in the standard position, as we are changing the basepoint at each step. However, this doesn't have influence in the computation of the length of the curve.

Another important observation is that, depending on our choice of starting point and direction, the same curve gives rise to many different words. 
Similarly, depending on the cyclic order we identify the vertices in the tuples when going straight, right or left in every octahedron the curve traverses, we also obtain different words. Thus, we need to define a notion of equivalence class of words.

\vspace{0.2cm}
Let $W$ denote the words $w$ formed with $\{S, R, L\}$ and $\{Id, \theta, \theta^2 \}$.
\begin{itemize}[leftmargin=20px] 
    \item[A)] We focus first on the case when the cyclic order of the ideal coordinates varies: recall that for each incoming face, we can identify its vertices $(v_1 \ v_2 \ v_3)$ with $\{0, i, \infty\}$ in three possible ways -$(0 \ i \ \infty), (\infty \ 0 \ i), (i \ \infty \ 0)$-. This implies that, for a curve going through $k$ octahedra, there will be $3^k$ possible choices to make, each of them yielding a different word, but describing the same curve. 
For every change in the cyclic order of the coordinates in some octahedron $O_i$, the word changes as follows: 
\vspace{0.2cm}

\begin{itemize}

    \item[$\diamond$] The orientation $\Theta_{i-1}$ of the gluing of $O_{i-1}$ and $O_i$ does one twist, i.e, gets multiplied by $\theta$ (here observe that $\theta^3=Id$).
    \vspace{0.15cm}
    
    \item[$\diamond$] The orientation $\Theta_i$ of the gluing of $O_1$ and $O_{i+1}$ does one twist, i.e, gets multiplied by $\theta$.
    \vspace{0.15cm}
    \item[$\diamond$] The direction of the movement in $O_i$ changes following the cyclic order $(R \ S \ L)$.
\end{itemize} 

\vspace{0.3cm}
For example, if consider the word $w = w_1\Theta_1 \cdot w_2\Theta_2 = S \cdot R\theta$, the $3^2=9$ words describing the same curve while changing the cyclic order of the coordinates in $O_1$ and $O_2$ (or equivalently the orientation of the gluings $\Theta_1$ and $\Theta_2$) are: 
\vspace{0.25cm}

\begin{center}
\begin{tabular}{ |c|c|c|c| } 
\hline
 $O_2$ $\backslash$ $O_1$ & $1$ & $2$ & $3$ \\ 
 \hline
 $1$ & $SR\theta$ & $L\theta R\theta^2$ & $R\theta^2 R$ \\ 
 \hline
 $2$ & $S\theta S\theta^2$ & $L\theta^2 S$ & $R S\theta$ \\ 
 \hline
 $3$ & $S\theta^2 L$ & $LL\theta$ & $R\theta L\theta^2$ \\ 
 \hline
\end{tabular}
\end{center}
\vspace{0.2cm}

where $(1) = (0 \ i \ \infty), (2) = (\infty \ 0 \ i)$, and $(3) = (i \ \infty \ 0)$.

\vspace{0.4cm}
\item[B)] Now, in each of the previous words, we are considering the same starting point and direction. Changing any of these parameters will gives us also new words for the same curve. Thus, we will say that two words $w,w`\in W$ are also equivalent if either:
\vspace{0.2cm}

\begin{itemize}
    \item[$\diamond$] $w'$ is a cyclic permutation of $w$ (considering $w_i\Theta_i$ as one element). \\ 
    For instance, if $w = w_1\Theta_1 \cdot w_2\Theta_2 = S R\theta$, there would be one cyclic permutation $w'=R\theta S$. 
    \vspace{0.15cm}
    
    \item[$\diamond$] $w'$ is a cyclic permutation of $w^{\ast}$, where $w^{\ast}$ is the word obtained by reading $w$ backwards, and changing the orientation $\Theta_i$ of each letter by the orientation $\Theta_{i-1}$ of the previous one. In other words, if $w = w_{1} \Theta_1 \cdot w_{2} \Theta_2 \cdot \ldots \cdot w_{\abs{w}} \Theta_{\abs{w}}$, then 
    \[ w^{\ast} =   w_{\abs{w}} \Theta_{\abs{w}-1} \cdot  w_{\abs{w}-1} \Theta_{\abs{w}-2} \cdot \ldots \cdot  w_{2} \Theta_1 \cdot  w_{1} \Theta_{\abs{w}}. \]

    \vspace{0.15cm}
    Following the previous example, the word $w^{\ast}$ of $w = w_1\Theta_1 \cdot w_2\Theta_2 = S R\theta$ is $w^{\ast}= RS\theta$, so the $w'$ given by these transformations would be: $\{RS\theta, S\theta R\}$.
\end{itemize}

\end{itemize}
\vspace{0.15cm}

With all, we consider the following:
\begin{defi}
The equivalence class of a word $w\in W$ is formed by all words $w'\sim w$ resulting from any of the transformations described in A) and B). We denote it by $[w]$.
\end{defi}

The previous formula (\ref{l}) gives us a precise relation between the length of closed geodesics and their (class of) words. 
Thus, in order to study $C_{[a,b]}(Y_n)$, we will need to find the (classes of) words that correspond to (homotopy classes of) closed curves on $Y_n$.
Then, by counting the number of homotopy classes of curves corresponding to each of these $[w]$, we will have insight on the number of closed geodesics of bounded length $\in [a,b]$.

We will see next that this argument can be indeed carried out substituting the counting of homotopy classes of curves by the counting of certain paths in $G_{Y_n}$.

\vspace{0.2cm}

\subsection{Distribution of cycles}

We're interested in counting closed geodesics in $Y_n$. One can see that each of them can be contracted into non-homotopic closed paths in the dual graph. This one-to-one correspondence between free homotopy classes of closed curves in the manifold $Y_n$ and closed paths in $G_{Y_n}$, is due to the fact that the octahedral complex deformation retracts onto the graph. Hence, any two curves in $Y_n$ will be homotopic if and only if their representatives in the dual graph are so.

Therefore, in order to count homotopy classes of curves, we can count closed paths in the dual graph. From now on, we will adopt the following terminology: we will call any closed path a \textit{circuit}, and a simple closed path a \textit{cycle}.

It might happen that some simple closed curves in $Y_n$ are homotopic to non-simple circuits in the graph. However, as it is written later on in the proof of Theorem \ref{cyclesYn}, the number of circuits of bounded length in the graph that are not cycles goes asymptotically almost surely to 0 as $n$ tends to infinity. This tells us, then, that it is enough for our purpose to study the number of cycles in $G_{Y_n}$. 
With this in mind, we consider the random variable: 
\[Z_{n, [w]} : \Omega_n \rightarrow \N, \quad n\in \N, \ [w]\in \mathcal{W} \coloneqq W /\sim,\]

defined as

\[Z_{n, [w]} (\omega) \coloneqq  \#\{\textrm{cycles $\gamma$ on $G_{Y_n}$ : $\gamma$ is described by $[w]$}\}. \]

\vspace{0.2cm}
We prove the following about the asymptotic behaviour of $\Z$:
\vspace{0.15cm}

\begin{theorem}\label{cyclesYn}
Consider a finite set $\mathcal{S}$ of equivalence classes of words in W. Then, as $n \rightarrow \infty$,
\[\Z \rightarrow Z_{[w]} \quad \textrm{in distribution for all} \ [w]\in \mathcal{S},\]
where:
\begin{itemize}
    \item $Z_{[w]}: \N \rightarrow \N$ is a Poisson distributed random variable with mean $\lambda_{[w]}= \frac{\abs{[w]}}{3^{\abs{w}}2\abs{w}}$ for all $[w]\in \mathcal{S}$.
    \item The random variables $Z_{[w]}$ and $Z_{[w']}$ are independent for all $[w],[w'] \in \mathcal{S}$ with $[w] \neq [w']$.
\end{itemize}
\end{theorem}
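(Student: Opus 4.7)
The plan is to apply the method of moments for joint Poisson convergence. It suffices to show, for every finite collection $[w_1],\dots,[w_s]\in\mathcal{S}$ and every tuple of non-negative integers $r_1,\dots,r_s$, that
\[
\mathbb{E}\!\left[\prod_{i=1}^{s}\bigl(Z_{n,[w_i]}\bigr)_{r_i}\right] \xrightarrow[n\to\infty]{} \prod_{i=1}^{s}\lambda_{[w_i]}^{r_i},
\]
where $(X)_r = X(X-1)\cdots(X-r+1)$ denotes the falling factorial. In the configuration-model setting this is the classical route to Poisson convergence of small-substructure counts; the product form simultaneously encodes the Poisson mean of each coordinate and the asymptotic independence across distinct classes.

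For the first moment, I would begin by fixing a canonical convention for the cyclic ordering of ideal coordinates at each octahedron, so that every rooted, oriented cycle in $G_{Y_n}$ produces a single word, and then write $Z_{n,[w]}$ as a sum of indicators over labeled potential cycles — each specified by an ordered tuple of $|w|$ distinct octahedra together with an entry/exit half-edge pair at each — divided by the $2|w|$ rootings and directions of a given cycle. Out of the $12^{|w|}$ per-octahedra choices of entry/exit pairs, only $4^{|w|}$ are compatible with the direction sequence $(w_1,\dots,w_{|w|})$ prescribed by a representative $w'\in[w]$. The configuration model realises the $|w|$ required face-pairings with probability $\sim(4n)^{-|w|}$ and selects the prescribed orientations with probability $3^{-|w|}$. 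Putting this together, summing over the $|[w]|$ words in the equivalence class, and taking $n\to\infty$, one obtains
\[
\mathbb{E}[Z_{n,[w]}] \to \frac{|[w]|}{3^{|w|}\cdot 2|w|} = \lambda_{[w]}.
\]

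For the higher and joint factorial moments, $\mathbb{E}[\prod_i (Z_{n,[w_i]})_{r_i}]$ counts ordered $(r_1+\dots+r_s)$-tuples of pairwise distinct cycles, the $i$-th block consisting of $r_i$ cycles of type $[w_i]$. The dominant contribution comes from tuples whose cycles are mutually vertex-disjoint: combinatorics and configuration-model probabilities then factor across the $r_1+\dots+r_s$ cycles, yielding in the limit $\prod_i \lambda_{[w_i]}^{r_i}$. Every configuration where two cycles share at least one octahedron or edge loses a factor $n^{-1}$ in the combinatorial count without compensation in the probability, so all overlapping terms contribute only $O(1/n)$. Two technicalities then close the argument: the expected number of non-simple closed circuits of bounded length tends to zero by a routine first-moment bound, so restricting to simple cycles is harmless; and the conditioning that $G_{Y_n}$ is simple changes probabilities by only a bounded factor converging to a positive constant, so the resulting distributional limits pass through.

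The main obstacle is the symmetry bookkeeping in the first-moment calculation: one has to verify that the operations $A$ (changing the cyclic ordering of ideal coordinates at each of the $|w|$ octahedra) and $B$ (cyclic rotation and reversal of the word) together generate exactly the set of re-labelings of a cycle in $G_{Y_n}$, so that $3^{|w|}$ and $2|w|$ appear cleanly in the denominator of $\lambda_{[w]}$ from the two genuine sources of over-counting. The analogous difficulty in the higher-moment step is the uniform control of overlapping configurations across the fixed finite family $\mathcal{S}$, handled by standard tangle-free estimates available for the configuration model of $4$-regular graphs.
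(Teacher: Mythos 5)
Your proposal follows essentially the same route the paper takes: apply the method of moments \`a la Bollob\'as, compute the first moment by summing indicators over rooted directed labeled potential cycles in the configuration model (accounting for the $3^{-|w|}$ from the orientation choices and $(4n)^{-|w|}$ from the half-edge pairings, and dividing by the $2|w|$ rootings/directions), then show that for higher and joint factorial moments only vertex-disjoint tuples survive in the limit because any overlapping configuration has $\#\text{edges}-\#\text{vertices}\geq 1$ and so contributes $O(1/n)$, with the same Euler-characteristic argument disposing of non-simple circuits. Two small remarks: (1) your invocation of tangle-free estimates is unnecessary here --- the $O(1/n)$ bound for overlapping tuples is the elementary vertex/edge count, and the paper reserves tangle-freeness for the later Dehn-filling arguments; (2) you are right to flag that passing from the unconditioned configuration model to $G_{Y_n}$ (conditioned on no loops/bigons) requires an asymptotic-independence argument rather than merely a ``bounded factor'' --- the paper does not spell this out either, so this is a point worth making explicit, and the correct justification is that the loop and bigon counts are themselves part of the same joint Poisson limit and are asymptotically independent of the $[w]$-cycle counts for $|w|\geq 3$.
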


The proof of this result follows the same structure as the proof of Bollobás' theorem on the asymptotic number of cycles in a random regular graph \cite[Theorem 2]{Bollobas}.
The argument is based on a version of the \textit{method of moments}, which consists on the following: let $X_{n,1}, \ldots, X_{n,k}$ be random variables, for $k\in\N$. We define the variables $(X_{n,i})_m$ as $(X_{n,i})_m = X_{n,i}(X_{n,i} -1) \cdots (X_{n,i}-m+1)$.
Then, if $\lambda_1, \ldots, \lambda_n \geq 0$ are such that, as $n\rightarrow \infty$, 
\[\E[(X_{n,1})_{m_1} \cdots (X_{n,k})_{m_k}] \rightarrow \lambda_{1}^{m_1} \cdots \lambda_{k}^{m_k},\]
for every $m_1, \ldots, m_k \geq 0$, we have that $(X_{n,1}, \ldots, X_{n,k}) \xrightarrow{d} (X_1, \ldots, X_k)$, where the $X_i$ are independent Poisson random variables of parameter $\lambda_i$ \cite[Theorem 6.10]{Moments}.

\begin{proof}
Take as random variables the $\Z$ and consider, for all $n\in \N$ and $[w]\in \mathcal{S}$:
\[(Z_{n,[w]})_{m_{[w]}} = Z_{n,[w]}\cdot (Z_{n,[w]}-1) \cdot \ldots \cdot (Z_{n,[w]} - m_{[w]} + 1),\]
where $m_{[w]}\in \N$.
Then, we prove that $\exists$ $(\lambda_{[w]})_{[w]\in \mathcal{S}}$, with $\lambda_{[w]} \in \R$, such that as $n \to \infty$, 
\[\E\Bigg[\prod_{[w]\in \mathcal{S}} (\Z)_{m_{[w]}} \Bigg] \rightarrow \prod_{[w]\in \mathcal{S}} \lambda_{[w]}^{m_{[w]}} \quad \textrm{for all} \ (m_{[w]})_{[w]\in \mathcal{S}}\in \N ^{\abs{\mathcal{S}}},\]
where $\lambda_{[w]} = \dfrac{\abs{[w]}}{2\abs{w}3^{\abs{w}}}$ for all $[w]\in \mathcal{S}$.

We start with the first moment: $\E[\Z]$. If we denote by $Lab_{[w]}$ the set of all labellings of a $[w]-\textrm{cycle}$, we can write this expectation as: 
\begin{align*}
    \E[\Z] &= \sum_{l\in Lab_{[w]}} \E[\mathbf{1}_{\{\textrm{the labelling $l$ appears in} \ G_{Y_n}\}}]  \\ 
    &= \sum_{l\in Lab_{[w]}} \mathbb{P}[\{\textrm{the labelling $l$ appears in} \ G_{Y_n}\}] = a_{n,[w]}\cdot p_{n,[w]}
\end{align*}
where $a_{n,[w]}$ denotes the number of possible labels of a $[w]$-cycle, and $p_{n,[w]}$ the probability that an element of $\Omega_n$ contains a given set of $\abs{w}$ pairs of half edges. 

To count $a_{n, [w]}$, we will fix first a starting vertex and a direction. Since there are 2 possible directions in a cycle, and $\abs{w}$ possible starting vertices, we will be counting in fact $2\abs{w}a_{n, [w]}$. 

Hence, if the word associated to a $[w]$-cycle is: 
\[w = w_{1} \Theta_1 \cdot w_{2} \Theta_2 \cdot \ldots \cdot w_{\abs{w}} \Theta_{\abs{w}},\]
where $w_i \in \{ S, R, L \}$ and $\Theta_i \in \{ Id, \theta, \theta^2 \}$ for $i=1, \ldots, \abs{w}$, we can describe a directed cycle with starting vertex $v_1$ by a list: 
\[ \{ (x_1, w_1\Theta_1 x_1), (x_2, w_2\Theta_2 x_2), \ldots, (x_{\abs{w}}, w_{\abs{w}}\Theta_{\abs{w}} x_{\abs{w}}) \}, \]
where $x_i$ is a half-edge of $v_i$ and $w_{i}\Theta_i x_1$ is the half edge on the left of $x_i$ if $w_i=L$, on the right of $x_i$ if $w_i=R$, and in front of $x_i$ if $w_i=S$, for every $i=1, \ldots, \abs{w}$ (see Figure \ref{half edges}).

\begin{figure}[H]
    \centering
    \includegraphics[scale=0.3]{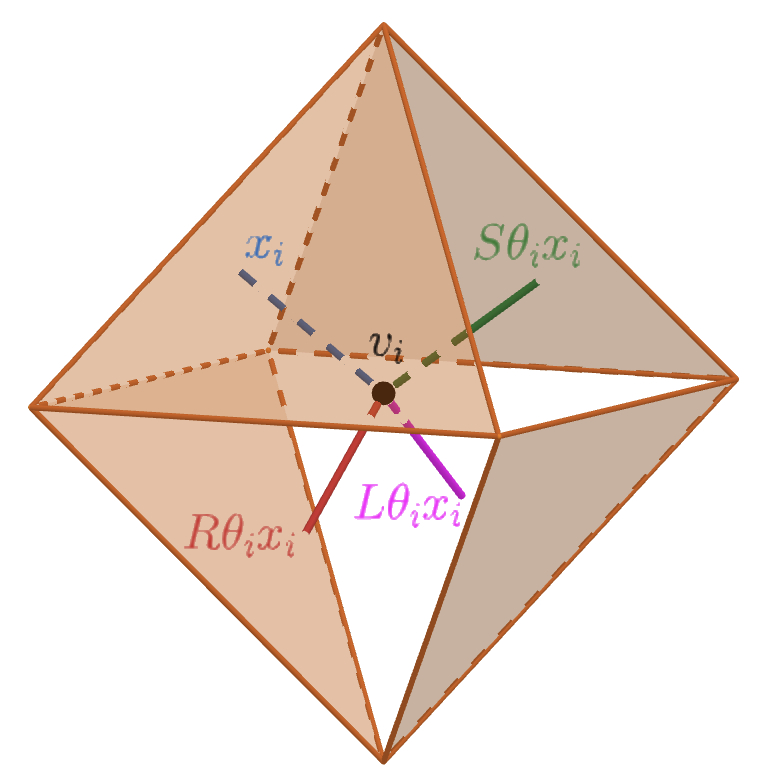}
    \caption{Half edges of $v_i$}
    \label{half edges}
\end{figure}

Taking into account that there are $n$ vertices to choose from (we consider $n$ octahedra), and that in every vertex we have 4 possibilities for $x_i$, 
we have: 
\[2\abs{w}a_{n, w} = 4^{\abs{w}} n (n-1) (n-2) \ldots (n - \abs{w}+1),\]
and since we get these lists for all the representatives of $[w]$, we finally obtain that: 
\[ a_{n, [w]} = \frac{\abs{[w]}}{2\abs{w}}4^{\abs{w}} n (n-1) (n-2) \ldots (n - \abs{w}+1).\]

Now we compute $p_{n, [w]}$.  
We have that the total number of possible partitions of the set of half-edges -of cardinal $4n$- into pairs is:
\[\mathcal{N}(2n) = \frac{\binom{4n}{2} \binom{4n-2}{2}\ldots\binom{2}{2}}{(2n)!} = (4n-1)!!. \]

Moreover, if we fix $k$ independent (vertex disjoint) edges, there are:
\[\mathcal{N}_k(2n) = \frac{\binom{4n-2k}{2} \binom{4n-2k-2}{2}\ldots\binom{2}{2}}{(2n-k)!} = (4n-2k-1)!! \]
configurations containing these k edges.

However, in our case, we also need to consider the 3 possible orientations in every join of two half edges. With this, $p_{n, [w]}$ is given by:

\[ p_{n, [w]} = \frac{3^{2n-\abs{w}}(4n-2\abs{w}-1)!!}{3^{2n}(4n-1)!!} =\frac{1}{3^{\abs{w}}(4n-1)(4n-3)\ldots(4n-2\abs{w}+1)}. \]
All together, it yields:
\begin{align*}
\E[\Z] &= \frac{\abs{[w]}}{2\abs{w}} \frac{4^{\abs{w}} n (n-1) (n-2) \ldots (n - \abs{w}+1)}{3^{\abs{w}}(4n-1)(4n-3)\ldots(4n-2\abs{w}+1)} \\ 
&\sim \frac{\abs{[w]}}{2\abs{w}}\frac{4^{\abs{w}} n^{\abs{w}}}{3^{\abs{w}}(4n)^{\abs{w}}} \longrightarrow \frac{\abs{[w]}}{2\abs{w}3^{\abs{w}}} \quad \textrm{as} \ n\rightarrow \infty.
\end{align*}

Now, we go on to the second factorial moment. $(\Z)_2$ counts the number of ordered pairs of distinct $[w]$-cycles. These two may or may not intersect, so we can split $(\Z)_2$ as:
\[(\Z)_2 = \Y' + \Y'', \]
where $\Y'$ counts the number of ordered pairs of vertex disjoint $[w]$-cycles, and $\Y''$ the number of ordered pairs of intersecting $[w]$-cycles.

The expectation of $\Y'$ can be computed using a similar argument as for $\E[\Z]$. As before, we write $\E[\Y']$ as:
\[\E[\Y'] = a_{n,[w]}' \cdot p_{n,[w]}' \]
where $a_{n,[w]}'$ counts the number of label·lings of an ordered pair of distinct non-intersecting $[w]$-cycles, and $p_{n,[w]}'$ is the probability that an element of $\Omega_n$ contains a given pair of disjoint sets, each of them containing $\abs{w}$ pairs of half-edges. 

In order to count $a_{n,[w]}'$ we fix again a direction and a starting vertex in each $[w]$-cycle. Thus, we will be counting $4\abs{w}^2a_{n,[w]}$. In the same way, 
\[4\abs{w}^2a_{n, w}' = 4^{2\abs{w}} n (n-1) (n-2) \ldots (n - 2\abs{w}+1),\]
and considering all the representatives of $[w]$ for each cycle, we obtain: 
\[ a_{n, [w]}' = \frac{\abs{[w]}^2}{(2\abs{w})^2}4^{2\abs{w}} n (n-1) (n-2) \ldots (n - 2\abs{w}+1).\]
As for $p_{n,[w]}'$, we observe that:
\[p_{n,[w]}' = p_{n,[w]}\cdot p_{n-\abs{w},[w]}. \]
Therefore,
\[ \E[\Y'] = a_{n,[w]}'\cdot p_{n,[w]}' \sim \bigg(\frac{\abs{[w]}}{2\abs{w}} \bigg)^2 \frac{4^{2\abs{w}} n^{2\abs{w}}}{3^{2\abs{w}}(4n)^{2\abs{w}}} \longrightarrow \bigg(\frac{\abs{[w]}}{2\abs{w}3^{\abs{w}}} \bigg)^2 \quad \textrm{as} \ n\rightarrow \infty. \]
Now let's study $\E[\Y'']$. $\Y''$ counts the pair of $[w]$-cycles that have at least one common vertex. Note that these can be seen also as a connected (by the common vertices and edges) multi-graph $P$ which has more than one cycle. Each of these $P$, then, will have more edges than vertices by construction. Expressing $\E[\Y'']$ as in the previous cases, i.e,
\[\E[\Y''] = a_{n,[w]}'' \cdot p_{n,[w]}'',\]
we observe that the number $a_{n,[w]}''$ is of the order $O(n^{\#\textrm{vertices}})$ and $p_{n,[w]}''$ depends only on the pairs of half-edges, so it is of the order $O(n^{-\#\textrm{edges}})$.  Therefore,
\[\E[\Y''] = O(n^{\#\textrm{vertices}} \cdot n^{-\#\textrm{edges}}) = O(n^{-1}).\]

Observe that this same argument works to show that asymptotically there won't be any short circuits that are not cycles in the dual graph: non-simple circuits have more edges than vertices, so the expected number of copies of them is of the order:
\[\E[\textrm{circuits}]  = O(n^{\#\textrm{vertices}} \cdot n^{-\#\textrm{edges}}) = O(n^{-1}).\]

All in all, we obtain that: 
\[\lim_{n\rightarrow \infty} \E[(\Z)_2] = \bigg(\frac{\abs{[w]}}{2\abs{w}3^{\abs{w}}} \bigg)^2 .\]
As shown in \cite[Theorem 2]{Bollobas}, the same argument applies for any factorial moment $\E[(\Z)_m]$, $m\in\N$, and for any joint factorial moment $\E\bigg[\prod_{[w]\in \mathcal{S}} (\Z)_{m_{[w]}} \bigg]$. 
In this case, we consider the number of sequences of $\sum_{[w]\in \mathcal{S}} m_{[w]}$ distinct $[w]$-cycles. We split them into the sum of intersecting and non-intersecting, and by the same reasoning as before:
\[ \lim_{n\rightarrow \infty} \E\bigg[\prod_{[w]\in \mathcal{S}} (\Z)_{m_{[w]}} \bigg] = \prod_{[w]\in \mathcal{S}} \bigg(\frac{\abs{[w]}}{2\abs{w}3^{\abs{w}}} \bigg)^{m_{[w]}} .\]
By the method of moments, this implies that the random variables $(\Z)_{[w]\in \mathcal{S}}$ converge in distribution, as $n \rightarrow \infty$ to independent Poisson random variables $Z_{[w]}$, with mean $\lambda_{[w]}= \frac{\abs{[w]}}{2\abs{w}3^{\abs{w}}}$ for all $[w]\in \mathcal{S}$. 
\end{proof}

\subsection{Proof of Theorem \ref{PoissonYn}} \label{sectionCl_Y}

Now that we have Theorem \ref{cyclesYn}, we can re-state and prove Theorem \ref{PoissonYn} as will follow next.

Recall that $W$ was the set of all words with letters in $\{S, R, L \}$ and $\{Id, \theta, \theta^2\}$, and $|w|$ denoted the length of a word $w\in W$. On the other hand, we denoted by $\mathcal{W} = W \addslash \sim$ the set of equivalence classes of words in $W$. From this, we define:
\[\mathcal{W}_{[a,b]} \coloneqq \{[w]\in \mathcal{W} : \abs{w} > 2, \ \abs{tr([w])} > 2 \ \textrm{and} \ 2\mathrm{Re}\big[\arccosh \big(\frac{tr([w])}{2}\big)\big] \in [a,b]\}.\]
This is a finite set, consequence of the following result. 

\begin{prop}\label{lgrow}
    Let $w\in W$ be any hyperbolic word formed by the letters $\{S, R, L\}$ and $\{Id, \theta, \theta^2 \}$ of word length $\abs{w}=r>0$. Then, there exists a  constant $J(r)>0$ satisfying that:
    \begin{itemize}
        \item[$\diamond$] J(r) is strictly increasing,
        \item[$\diamond$] $\frac{J(r)}{\log(r)} \rightarrow 1$ as $r\rightarrow\infty$,
    \end{itemize}
    such that the translation length of the geodesic $\gamma$ corresponding to the equivalence class of $w$ is bounded below by $J(r)$, that is, 
    \[l_{\gamma}(w) \geq J(r).\]
\end{prop}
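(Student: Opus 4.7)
The plan is to reduce the proposition to a lower bound on the trace of $M_w$. By formula~\eqref{l}, if $\lambda$ is the eigenvalue of $M_w$ of modulus $\geq 1$, then $l_\gamma(w) = \log |\lambda|^2$. Writing $\lambda = \rho e^{i\phi}$ with $\rho \geq 1$ and using $\lambda + \lambda^{-1} = \textrm{trace}(M_w)$, a short computation yields
\[ (\rho + \rho^{-1})^2 = |\textrm{trace}(M_w)|^2 + 4\sin^2\phi \geq |\textrm{trace}(M_w)|^2,\]
so that $l_\gamma(w) \sim \log|\textrm{trace}(M_w)|^2$ whenever $|\textrm{trace}(M_w)|$ is large. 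Moreover, the four matrices $S$, $R$, $L$, $\theta$ all lie in $\textrm{SL}(2, \mathbb{Z}[i])$ (direct check of entries and determinants), so $M_w \in \textrm{SL}(2, \mathbb{Z}[i])$ and $\textrm{trace}(M_w) \in \mathbb{Z}[i]$. The loxodromy condition rules out the finitely many real Gaussian integer traces in $[-2,2]$, already yielding a uniform positive constant lower bound $l_\gamma(w) \geq l_0 > 0$ independent of $r$. The proposition thus reduces to the estimate $|\textrm{trace}(M_w)|^2 \gtrsim r$ for hyperbolic $w$ of length $r$.

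The target asymptotic is matched by the explicit family $w = S^{r-1}\theta$: a direct matrix computation gives $\textrm{trace}(M_w) = 1 + (r-1)i$, so $|\textrm{trace}(M_w)|^2 = (r-1)^2 + 1$ and $l_\gamma(w) \sim 2\log r$. For arbitrary hyperbolic words, the plan is to proceed by induction on $r$, exploiting the structure of the nine letters. Among them, the only parabolic letters are $S$, $R\theta$, and $L\theta^2$, whose fixed points on $\partial\mathbb{H}^3$ are $\infty$, $0$, and $i$ respectively -- in particular, pairwise distinct. Consequently, any sufficiently long word either contains a loxodromic letter, which contributes rotational growth to some matrix entry, or consists only of parabolics whose fixed points do not all coincide, in which case their product cannot stabilise near the identity and some entry of $M_w$ must grow at least polynomially in $r$. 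In either case the growth propagates to the trace.

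The main obstacle is ruling out subtle cancellations in words that combine many near-parabolic segments, where \emph{a priori} distinct entries could conspire to keep the trace small. Here the discreteness of $\textrm{SL}(2, \mathbb{Z}[i])$ is decisive: for each constant $C > 0$, only finitely many $g \in \textrm{SL}(2, \mathbb{Z}[i])$ satisfy $|\textrm{trace}(g)|^2 \leq C$. If $|\textrm{trace}(M_w)|^2$ remained bounded by $C$ along a sequence of hyperbolic words of unbounded length, then some single group element would admit semigroup factorisations of arbitrarily large length in the nine generators, contradicting the exponential growth rate of the semigroup they generate. Once $|\textrm{trace}(M_w)|^2 \geq cr$ is established, it remains to define $J(r)$ as a strictly increasing function bounded above by $\min_{|w|=r}\, l_\gamma(w)$ with $J(r)/\log r \to 1$ (for example, the monotone envelope of this minimum, slightly perturbed to enforce strict monotonicity), which completes the proof.
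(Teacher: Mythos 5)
Your plan reduces the proposition to a lower bound on $|\mathrm{trace}(M_w)|$, but the central step is incorrect. You claim that for each $C>0$, only finitely many $g\in\mathrm{SL}(2,\mathbb{Z}[i])$ satisfy $|\mathrm{trace}(g)|^2\leq C$. That is false: every matrix $\begin{pmatrix}1&n\\0&1\end{pmatrix}$, $n\in\mathbb{Z}[i]$, has trace $2$, and for any fixed hyperbolic trace value $t$ there are infinitely many $g$ with that trace (arbitrary conjugates, which have unboundedly large off-diagonal entries). Only the \emph{set of trace values} in a disk is finite, not the set of group elements. Consequently the pigeonhole-on-group-elements argument, and the ensuing claim of a contradiction with the ``exponential growth rate of the semigroup,'' do not go through. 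Moreover, even a repaired version of this line of reasoning would at best show that the trace is unbounded along words of unbounded length; it would not yield the quantitative bound $|\mathrm{trace}(M_w)|^2\gtrsim r$, which is what you need to get $J(r)\sim\log r$. Note that the paper itself explicitly cautions against exactly this route: the moduli of the traces are not monotone in word length (it records $w=RLRR$ with $l_\gamma=3.47$ but $w'=RLRRL$ with $l_\gamma=3.33$), so any argument based purely on trace growth has to confront this non-monotonicity head-on, and yours does not.

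The paper's actual proof is geometric and effective. It introduces $d(w)$, the hyperbolic distance between the vertical plane $P=\langle 0,i,\infty\rangle$ and its image $w(P)$, and shows by a nesting argument on half-spaces ($w(H_2)\subset H_2$ for all nine generators) that the axis of a hyperbolic $w$ crosses both planes, hence $l_\gamma(w)\geq d(w)$. One then splits into two cases: if $d(w)\geq J(r)$, done; otherwise one rewrites $w$ as a product of powers of the three parabolic generators $S$, $R\theta$, $L\theta^2$, uses the fact that each disjoint occurrence of a 2-letter hyperbolic subword pushes the plane by $\arccosh 3$ to bound the number of ``blocks,'' applies the pigeonhole principle to find a long parabolic power $s_i\geq K_r$, and computes the distance contributed by the resulting subword explicitly as $\arccosh(2K_r+1)$. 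Setting $J(r)$ so that the two cases balance gives the implicit definition of $J$, from which monotonicity and the asymptotics $J(r)\sim\log r$ follow via the Lambert $W$-function. Your observation that the generators lie in $\mathrm{SL}(2,\mathbb{Z}[i])$ is correct, and the resulting uniform systole lower bound $l_\gamma(w)\geq l_0>0$ is a reasonable qualitative remark, but it does not substitute for the effective geometric argument.
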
 

\begin{proof}
    Let $P$ be the plane in $\mathbb{H}^3$ spanned by the triple $\{0,i,\infty\}$, and $d(w)$ be the distance in the upper half-space of $\mathbb{H}^3$ between the planes $P$ and $w(P)$, that is,
\begin{align*}\label{distword}
    d(w) &= \min_{\substack{(x_1,y_1)\in P \\ (x_2,y_2)\in w(P)}} \{d((x_1, y_1),(x_2, y_2))\} \\ &= \min_{\substack{(x_1,y_1)\in P \\ (x_2,y_2)\in w(P)}} \bigg\{ \arccosh\bigg(1+\frac{(x_2 - x_1)^2 + (y_2 - y_1)^2}{2y_1y_2}\bigg) \bigg\}.
\end{align*}

    We start by considering some number $J(r)>0$. Then, there are two possibilities, that $d(w) \geq J(r)$ or that $d(w) < J(r)$. From the study of each case, we aim to optimise this constant, while making it a valid lower-bound in any of these two initial cases.

    As mentioned above, we have these two possible scenarios:
    \begin{itemize}[leftmargin=20px]
        \item $d(w) \geq J(r)$: We show that $l_{\gamma}(w) \geq d(w)$. For this, it is enough to see that the axis of the isometry runs through the planes spanned by the transformation. 

        To do so, we first observe the following: Let $P$ denote the plane described by $\{0,i,\infty\}$, and consider the partition $\mathbb{H}^3 \backslash P = H_1 \sqcup H_2$, where $H_2$ contains all points in $\mathbb{H}^3$ with positive first component. Then, as described in Figure \ref{planes_spanned}, for any of the 9 isometries $M\in\{S\theta^{i}, R\theta^{i}, L\theta^{i}, \ i=0,1,2\}$, we have
        \[M(H_2) \subset H_2.\]
        From this, one infers that for any isometry $w$ product of the previous matrices, we get:
        \[w(M(H_2)) \subset w(H_2).\]
        This means that every time we add a transformation M to our word, the half-space spanned by this new isometry is contained in the previous one. This tells us, in particular, that the attracting fix point of the axis in any hyperbolic transformation $w$ -sending the initial plane $P$ to $w(P)$- has to be lie inside the hyperplane $w(P)$, and the repulsive fix point then, needs to be in $H_1$. Therefore, the axis intersects both $P$ and $w(P)$, as well as all the planes spanned by all shorter words of $w$.

        Hence, by definition of the distance, it's clear that: 
        \[d(w) = d(P, w(P)) \leq l_{\gamma}(w).\]

        Since we had $d(w) \geq J(r)$, we obtain that $l_{\gamma}(w)\geq J(r)$.
        \vspace{0.2cm}
        
        \item $d(w) < J(r)$: We can re-write any word $w\in W$ as 
        \[w = S^{s_1} (R\theta)^{s_2} (L\theta^2)^{s_3} \cdot \ldots \cdot S^{s_{n-2}} (R\theta)^{s_{n-1}} (L\theta^2)^{s_n} \theta^{t},\]
        where $s_i\in\N$ for $i=1,\ldots,n$, $s_1 + \cdots + s_n = r$ and $t\in\{0,1,2\}$. We suppose then that $w$ is of this form. 
        
        Now, we have the following: any hyperbolic word of two letters spanning a plane of empty intersection with $P$ is at distance $d = \arccosh(3)$ from it. Indeed, these words can be reduced, by equivalence, to the list:
        \[ \{ SR\theta, R\theta S, R\theta L\theta^2, L\theta^2 R\theta, SL\theta^2, L\theta^2 S \}\]
        which span all possible different planes (see Figure \ref{planes_spanned}), all of them at hyperbolic distance $\arccosh(3)$.

        Since we have, from above, that for any word $w\in W$, $w=w_1 \ldots w_n$: 
        \[\quad \quad \quad w(H_2 \sqcup P) \subset w_1 \cdots w_{n-1}(H_2 \sqcup P) \subset w_1 \cdots w_{n-2}(H_2 \sqcup P) \subset \cdots \subset H_2 \sqcup P,\]
        we get that that the geodesic realising the distance from $P$ to $w(P)$ intersects the intermediary planes in some point. The segment from $P$ to these points will be then, larger or equal that the distance between the planes, by definition of distance. Therefore, any word $w$ containing at least $k$ of these words in the list -$w_1, \ldots, w_k$- disjointly, will have distance: 
        \[d(w) \geq \sum_{i=1}^{k}d(w_i) = k \arccosh(3). \]
         
        \begin{figure}[H]
            \centering
            \includegraphics[scale = 0.4]{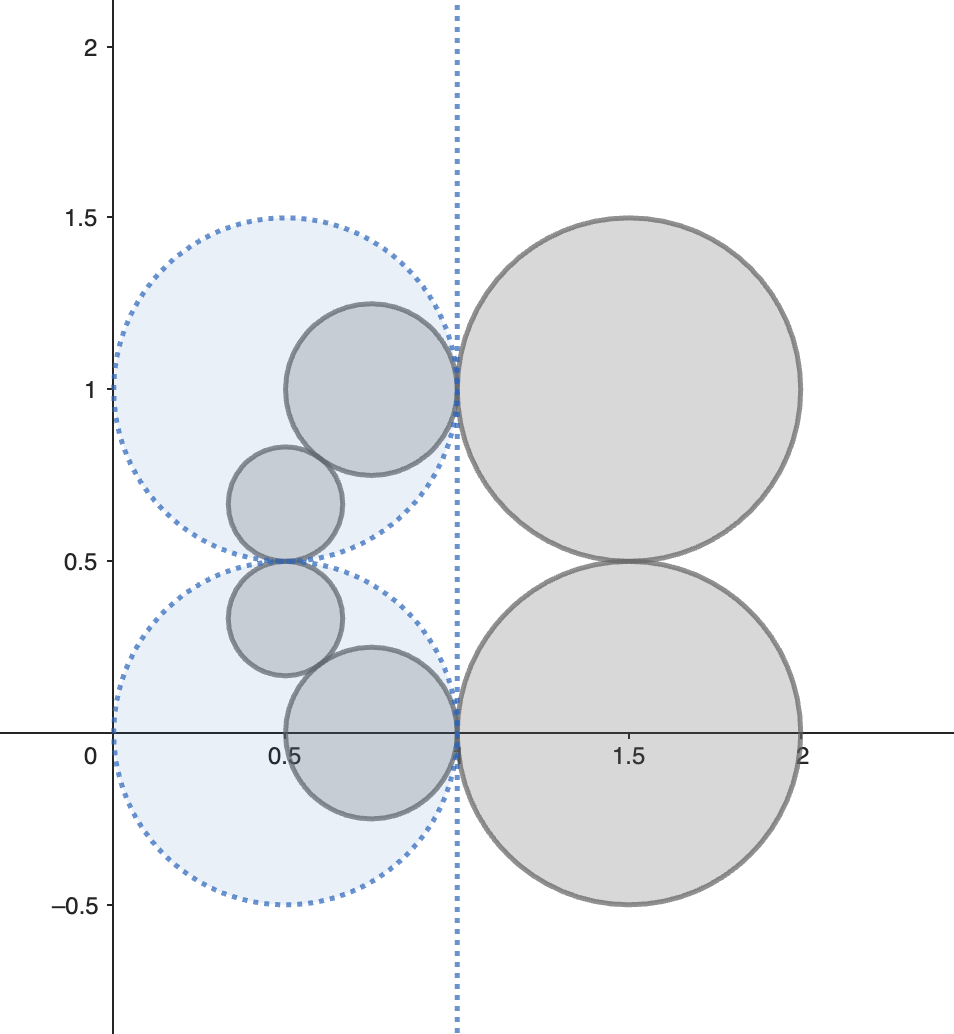}
            \caption{View from "infinity" in the upper half-space model of $\mathbb{H}^3$: the circles in blue represent the hemispheres spanned by 1-letter words, and the ones in grey the hemispheres spanned by all 2-letter hyperbolic words of positive distance from $P$.}
            \label{planes_spanned}
        \end{figure}
        Equivalently and relying it to our case, if $d(w)\leq  J(r) = k \arccosh(3)$, this tells us that $w$ contains at most $\frac{J(r)}{\arccosh(3)}$ disjoint words of the previous type. This fact gives us then an upper bound on the number of $s_i$'s in $w$:
        \[\quad \quad m = \textrm{nº$s_i$} \leq 2\cdot\textrm{nº disjoint 2-letter hyp. words}+1 \leq \frac{2J(r)}{\arccosh(3)}+1.\] 
        Since $w$ is of length $r$, by the pigeonhole principle, we obtain that there exists at least an $s_i$ such that
        \[ s_i \geq \ceil[\bigg]{\frac{r}{m}} \geq \ceil[\bigg]{\frac{r}{\frac{2J(r)}{\arccosh(3)}+1}} = \ceil[\bigg]{\frac{\arccosh(3)r}{2J(r)+\arccosh(3)}}. \]

        Let $K_r = \ceil[\bigg]{\frac{\arccosh(3)r}{2J(r)+\arccosh(3)}}$. From this last inequality, one can deduce that the word $w$ has as a sub-word one of the followings: 
        \[ \quad \quad \{S^{K_r}R\theta, \  R\theta^{K_r}S,  \ (R\theta)^{K_r}L\theta^2, \ (L\theta^2)^{K_r}R\theta, \ S^{K_r}L\theta^2, \ (L\theta)^{K_r}S\}.\]
        All of these words have distance $\arccosh(2K_r+1)$ (by direct computation). Therefore, since $l_{\gamma}(w) \geq d(w) \geq d(\tilde{w})$, for $\tilde{w}$ in the previous list, we obtain:
        \[\quad \quad l_{\gamma}(w) \geq \arccosh(2K_r+1) \geq \arccosh\bigg(2\bigg(\frac{\arccosh(3)r}{2J(r)+\arccosh(3)}\bigg)+1\bigg).\]   
    \end{itemize}
    Hence, in general, we have: 
    \[l_{\gamma}(w) \geq d(w) \geq \min \bigg\{ J(r), \arccosh\bigg(2\bigg(\frac{\arccosh(3)r}{2J(r)+\arccosh(3)}\bigg)+1\bigg)\bigg\}.\]
    This bound is optimised whenever $J(r) = \arccosh\bigg(2\bigg(\frac{\arccosh(3)r}{2J(r)+\arccosh(3)}\bigg)+1\bigg)$. Thus, we define the function 
    \begin{align*}
        J: \R_{>0} &\longrightarrow \R_{>0} \\
        r&\longmapsto J(r)
    \end{align*}
    
    implicitly by $J(r) = \arccosh\bigg(2\bigg(\frac{\arccosh(3)r}{2J(r)+\arccosh(3)}\bigg)+1\bigg)$.

    \vspace*{0.2cm}
    It rests to prove that this function is increasing: we can re-write it as 
    \begin{equation}\label{rewrite}
        (\cosh(J(r))-1)(2J(r)+\arccosh(3)) = 2\arccosh(3)r
    \end{equation}

    Since $\arccosh(3)>0$ and $r>0$, both sides of the equation are positive. Then, for $0<r_1<r_2$, we have that $\arccosh(3)r_1 < \arccosh(3)r_2$, which implies:
    \[ \frac{1}{2}(\cosh(J(r_1))-1)(2J(r_1)+\arccosh(3)) < \frac{1}{2}(\cosh(J(r_2))-1)(2J(r_2)+\arccosh(3)).\]
    
    Since both components of the product are positive, this means that either: 
    \begin{itemize}
        \item[$\diamond$] $(2J(r_1)+\arccosh(3)) < (2J(r_2)+\arccosh(3))$: yielding that $J(r_1)<J(r_2)$.
        \vspace{0.15cm}
        \item[$\diamond$] $(\cosh(J(r_1))-1) < (\cosh(J(r_2))-1)$: giving also $J(r_1)<J(r_2)$ as $\cosh(x)$ is an increasing function.
    \end{itemize}
    
    \vspace*{0.1cm}
    As a final observation, we see that $J(r)$ has a logarithmic behaviour in the limit, that is, $\frac{J(r)}{\log(r)} \rightarrow 1$ as $r\rightarrow\infty$.

    Indeed, looking at (\ref{rewrite}), we note that $J(r)$ is the $W-$function $J(r) = W_n(2\arccosh(3)r)$. In particular, since $r>0$, and both $r$ and $J(r)$ are real numbers, $J(r)$ is the principal branch, that is,
    \[J(r) = W_0(2\arccosh(3)r).\]
    
    Now, one has that, for large values or $r$, this function $W_0(r)$ is asymptotic to $\ln(r) - \ln(\ln(r)) + o(1)$. In our case, 
    
    \begin{align*}
        W_0(2\arccosh(3)r) &= \ln(2\arccosh(3)r) - \ln(\ln(2\arccosh(3)r)) + o(1) \\ 
        &= \ln(r) - \ln(2\arccosh(3)) -  \ln(\ln(2\arccosh(3)r)) + o(1).
    \end{align*}
    
  Therefore, we get
  
  \[\lim_{r\rightarrow\infty}\frac{J(r)}{\ln(r)} = \frac{\ln(r) - \ln(2\arccosh(3)) -  \ln(\ln(2\arccosh(3)r)) + o(1)}{\ln(r)} = 1.\]
\end{proof}

Note that, although this lower bound may not be be the sharpest possible, here we cannot use the growth of the traces or the distances in $\mathbb{H}^3$ spanned by the words to control the growth of the length, as it was possible for the two-dimensional case of this model (see \cite{Bram1}). Indeed, the traces are complex numbers so they don't have a natural ordering, and there exists the possibility that their absolute value decrease whenever we add a letter to a word. In the same way, it is neither true that the translation length grows whenever the distance $d(w)$ does so. For example, for $w=RLRR$, we have:
\[ d(w) = 2.63 \ \textrm{and} \  l_{\gamma}(w) = 3.47, \]
while if we add one more letter $w' = RLRRL$, we get:
\[ d(w') = 3.26 \ \textrm{and} \ l_{\gamma}(w) = 3.33. \]

\vspace*{0.1cm}
After having proved this, we get to our main point. 
For $a, b>0$, $C_{[a,b]}(Y_n)$ denoted the number of primitive closed geodesics of length $\in [a, b]$ on $Y_n$. Also, 
\[\mathcal{W}_{[a,b]} = \{[w]\in \mathcal{W} : \abs{w} > 2, \ \abs{tr([w])} > 2 \ \textrm{and} \ 2\mathrm{Re}\big[\arccosh \big(\frac{tr([w])}{2}\big)\big] \in [a,b]\}.\]

Then, we prove:
\begin{reptheorem}{PoissonYn}
For any finite collection of disjoint intervals $[a_1,b_1], \ldots, [a_t,b_t] \subset \R_{\geq 0}$ , the random vector $(C_{[a_1,b_1]}(Y_n), \ldots, C_{[a_t,b_t]}(Y_n))$
converges jointly in distribution, as $n\rightarrow\infty$, to a vector of independent random variables $$(\mathcal{C}_{[a_1,b_1]}, \ldots, \mathcal{C}_{[a_t,b_t]}),$$ where for all $i=1,\ldots,t$, $\mathcal{C}_{[a_i,b_i]}$ is Poisson distributed with parameter 
\[\lambda = \sum_{[w]\in \mathcal{W}_{[a_i,b_i]}} \lambda_{[w]}.\]
\end{reptheorem}

\begin{proof}
For simplicity, we will consider the interval $[0, l]$, and denote $C_{[0,l]}(Y_n)$ by $C_{l}(Y_n)$.
We can write $C_l(Y_n)$ as:
\[ C_l(Y_n) = \sum_{[w]\in \mathcal{W}_l} Z_{n,[w]} + \sum_{[w]\in \mathcal{W}_l} Z_{n,[w]}', \]
where $Z_{n,[w]}'$ denote the number of circuits of $G_{Y_n}$ described by $[w]\in \mathcal{W}_{[0,l]}$ that are not cycles.

Notice that the right-hand side is a sum of two finite sums of independent random variables $Z_{n,[w]}$ and $Z_{n,[w]}'$ respectively. On the one hand, since we have seen that the expected number of copies of any non-simple circuit in $G_{Y_n}$ tends to 0 as $n\rightarrow\infty$, the second summand will vanish in the limit. Indeed, we have that:

\[\lim_{n\rightarrow\infty}\E[\sum_{[w]\in \mathcal{W}_l} Z_{n,[w]}'] =  \lim_{n\rightarrow\infty} \sum_{[w]\in \mathcal{W}_l} \E[Z_{n,[w]}'] = \sum_{[w]\in \mathcal{W}_l} \lim_{n\rightarrow\infty}\E[Z_{n,[w]}']= 0.\]

For the other summand, we apply Theorem \ref{cyclesYn}. All in all, we obtain that:

\[C_l(Y_n) \xrightarrow{d} \sum_{[w]\in \mathcal{W}_l} Z_{[w]} = \mathcal{C}_l .\]
Since each $Z_{[w]}$ is Poisson distributed with parameter $\lambda_{[w]}= \frac{\abs{[w]}}{2\abs{w}3^{\abs{w}}}$, and they are all independent of each other, we obtain that the random variable $\mathcal{C}_l$ is Poisson distributed with parameter $\lambda = \sum_{[w]\in \mathcal{W}_l} \lambda_{[w]}$.
Finally, as none of the $[w]\in\mathcal{W}$ can belong to any two $\mathcal{W}_{[a,b]}$, $\mathcal{W}_{[c,d]}$ for $[a,b],[c,d]$ disjoint intervals, one concludes that for any finite collection $(C_{[a_1,b_1]}(Y_n), \ldots, C_{[a_t,b_t]}(Y_n))$, the corresponding random variables $(\mathcal{C}_{[a_1,b_1]}, \ldots, \mathcal{C}_{[a_t,b_t]})$ are independent.

\end{proof}

\section{Proof step 2: The length spectrum of $M_n$} \label{statement}

In this section, we explain first how $M_n$ can be obtained from the intermediary $Y_n$ by Dehn filling. After that, we finish the proof of Theorem \ref{Poisson} by showing that, after this compactification procedure, the result obtained in Section \ref{sectionCl_Y} still holds, that is:

\begin{reptheorem}{Poisson}
As $n\rightarrow\infty$, the length spectrum of a random compact hyperbolic 3-manifold with boundary $M_n$ converges in distribution to a Poisson point process (PPP) on $\R_{\geq 0}$ of intensity $\lambda$, where for any  $a, b\geq 0$,
\[\lambda([a,b]) = \sum_{[w]\in \mathcal{W}_{[a,b]}} \lambda_{[w]}.\]
\end{reptheorem}

\subsection{From $Y_n$ to $M_n$}

In general, a \textit{hyperbolic Dehn filling} is a standard operation in three dimensional geometry used to construct compact hyperbolic manifolds from cusped ones. It consists on the following: let $N$ be a cusped hyperbolic 3-manifold. Topollogically, we can think of $N$ as the interior of a compact manifold with toroidal boundary. Then, a \textit{Dehn filling} is the operation of gluing a solid torus $D \times S^1$ along each torus boundary component T to $N$ via a diffeomorphism $\psi: \partial D \times S^1 \rightarrow T$. This mapping takes each of the meridians $\partial D \times \{x\}$ to some closed curve $\gamma$ in T. The choice of $\gamma$ determines the Dehn filling up to homeomorphism. In our case, we do a special case of Dehn filling, that goes as follows. 

Recall that the manifolds $Y_n$ are non-compact random hyperbolic 3-manifolds with totally geodesic boundary, obtained from the gluing of octahedra described in Section \ref{gluing_oct}. It turns out, that their boundary is a random hyperbolic surface $S$ with cusps (see \cite{Bram_Jean}).

Now, if we remove a horospherical neighbourhood of each cusp (Figure \ref{cusp_nbhd}) we obtain a compact manifold with boundary consisting on $S$ together with some open cylinders. These can be seen as "half a torus"; then, by doubling this manifold, we would recover the torus, in which we could apply the exact procedure explained before. In the single one, though, we do this particular case of filling: in each cylinder, we glue a solid cylinder $D \times [0,1]$ along its boundary. Note that in this case, the image of the meridian is determined, and so the Dehn fillings as well. The result is a compact manifold M whose boundary is a compact surface $\overline{S}$.

\begin{figure}[H]
    \centering
    \includegraphics[scale=0.35]{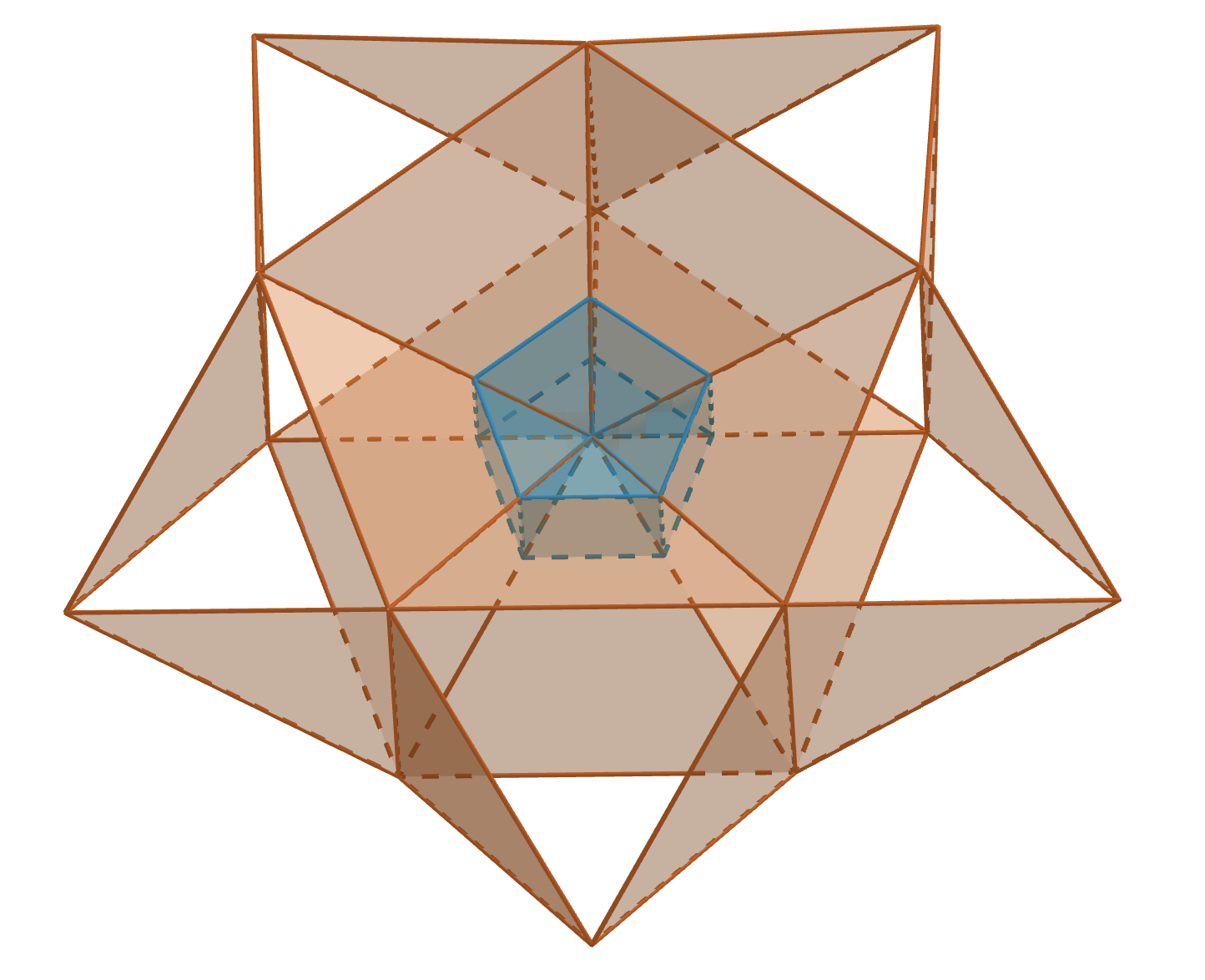}
    \caption{Cusp neighbourhood in $Y_n$ (blue).}
    \label{cusp_nbhd}
\end{figure}

The manifolds M obtained by this procedure have the same distribution as the $M_n$ presented in Section \ref{modelM_n}. Hence, we can, and will, think of $M_n$ as the Dehn fillings of $Y_n$.

\subsection{Behaviour of curves after compactification} 
In order to prove Theorem \ref{Poisson}, we need to see that the number of closed geodesics of some bounded length stays asymptotically the same after the Dehn filling. 
This comes down to checking mainly two things: first, that the length of the curves doesn't change "too much" after the compactification; and second, that they do not collapse into a point, or into each other -that is, that any two short closed geodesics that weren’t homotopic in $Y_n$, don’t become homotopic after the filling of the cusps-.

\subsubsection{Change in length of closed geodesics}

We start proving the first and main point. 

\begin{prop} \label{length_gammas}
Let $l_{max}>0$. For every $\epsilon >0$, the following holds a.a.s as $n\rightarrow \infty$: for every closed geodesic $\gamma$ in $M_n$ with length$(\gamma) \leq l_{max}$, there exists a closed geodesic $\gamma'$ in $Y_n$ such that the image of $\gamma'$ is homotopic to $\gamma$, and
\[\frac{1}{1+\epsilon}\textrm{length}(\gamma) \leq \textrm{length}(\gamma') \leq (1+\epsilon)\textrm{length}(\gamma).\]
\end{prop}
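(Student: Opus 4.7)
My plan is to deduce the proposition from a quantitative Dehn-filling theorem of Futer-Purcell-Schleimer \cite{Futer_Purcell}: if each slope along which we fill $Y_n$ has normalized length at least a threshold $L=L(\epsilon)$, then there is a $(1+\epsilon)$-bilipschitz diffeomorphism $\phi$ between the complement in $Y_n$ of horoball neighbourhoods of its cusps and the complement in $M_n$ of Margulis tubes around the core curves of the filled solid tori. Given such a $\phi$, the length estimate for a closed geodesic $\gamma$ in $M_n$ of length at most $l_{max}$ that avoids the core tubes is essentially formal: the pullback $\phi^{-1}(\gamma)$ is a loop in $Y_n$ whose geodesic representative $\gamma'$ is homotopic to $\gamma$ under the inclusion $Y_n \hookrightarrow M_n$, and the bilipschitz constant immediately gives the inequality in the statement.

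The first step is therefore to verify the normalized-length hypothesis a.a.s. Each cusp of $Y_n$ inherits a Euclidean structure assembled from the square links of the ideal vertices of the octahedra meeting it; the combinatorics of this tiling, and hence the metric of the cusp cross section, is completely encoded in the random data $(w_n,\vartheta_n)$. Andreev's theorem \cite{Andreev} is invoked at this stage to control the hyperbolic structure on $M_n$ itself and to translate the combinatorial description of the cusps into a geometric one. A first-moment calculation in the spirit of the proof of Theorem \ref{cyclesYn}, based on the configuration-model description of the dual graph, should show that the expected number of cusps of combinatorial size below any fixed constant is $O(1)$. Combined with a union bound and a quantitative conversion from combinatorial cusp size to normalized slope length, this will imply that the minimum normalized slope length across all cusps of $Y_n$ tends to infinity a.a.s.

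Once $\phi$ is available, the argument splits into two cases. If $\gamma$ is disjoint from all Margulis tubes around the core curves, then $\phi^{-1}(\gamma)$ is a non-peripheral closed loop in $Y_n$ that straightens to the desired $\gamma'$, with lengths related by the $(1+\epsilon)$ bilipschitz factor. If instead $\gamma$ enters a tube around a core curve $c$, then the length of $c$ tends to zero a.a.s.\ and its Margulis tube has radius tending to infinity, so the only closed geodesics of length at most $l_{max}$ that can live inside the tube are iterates of $c$ itself. These iterates correspond on the $Y_n$ side to peripheral curves around the corresponding cusp and are a separate phenomenon, treated independently by the statement of Lemma \ref{homotopy} on the behaviour of homotopy classes under the filling.

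The hardest step will be the first one: promoting typical-case control on a single cusp into a uniform bound across all cusps of $Y_n$ simultaneously. The number of cusps grows with $n$, and the normalized slope length depends not only on the number of squares around a cusp but on their combinatorial arrangement, so the required concentration estimate must be quantitative in the combinatorial cusp data. Making this translation precise, and verifying that the Futer-Purcell-Schleimer threshold $L(\epsilon)$ can indeed be surpassed simultaneously across all cusps a.a.s., is the technical heart of the argument.
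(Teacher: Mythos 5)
The central difficulty in this proposition is exactly the part you flag as the technical heart, and your plan for it does not go through. Specifically, your claim that ``the minimum normalized slope length across all cusps of $Y_n$ tends to infinity a.a.s.'' is false. Cusps of the octahedral complex correspond to edge-cycles of bounded combinatorial length in the gluing, and in the configuration model the number of such short cycles converges to a (positive-parameter) Poisson random variable; it does not go to zero. So a.a.s.\ there \emph{are} cusps of bounded combinatorial size, hence of bounded normalized slope length, and the Futer--Purcell--Schleimer threshold $L(\epsilon)$ cannot be surpassed uniformly across all cusps. Your first-moment bound of $O(1)$ actually confirms this: $O(1)$ means the expectation is bounded, not that it vanishes. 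A single application of the Futer--Purcell--Schleimer bilipschitz theorem to all cusps at once is therefore impossible, which is the reason the paper cannot use your proposed one-step argument.

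What the paper does instead is split the cusps into three regimes and fill them in stages. Small cusps (combinatorial length $\leq \frac{1}{8}\log_3 n$) are handled by Andreev's theorem through Lemma~\ref{lemma_Andreev}, which yields only a bilipschitz constant $J_0$ that is \emph{not} close to $1$; crucially, the paper then argues via tangle-freeness of the dual graph (Lemma~\ref{tanglefree}) that short closed geodesics a.a.s.\ do not enter the region near small cusps at all, so the weak constant $J_0$ never gets used and the length is preserved exactly. Medium cusps are filled one at a time with Theorem~\ref{purcell}, which works because medium cusps a.a.s.\ do not meet each other (Claim~\ref{claim}); large cusps are filled all at once. Even restricting to cusps of length $\geq \frac{1}{8}\log_3 n$, the paper notes that the \emph{total} normalized length is too small for a single application of Theorem~\ref{purcell}, which is why the medium/large split and the one-at-a-time filling are needed. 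Your proposal also mislocates the role of Andreev's theorem (it is used for a bilipschitz estimate on the small-cusp part, not just to establish the hyperbolic structure) and the role of Lemma~\ref{homotopy} (it rules out geodesics becoming homotopic after filling, not iterates of the core curve; the paper instead uses nested Margulis tubes and \cite{DistMargulistube} to show a short geodesic cannot enter the very thin part of a filled cusp). So while both arguments revolve around Futer--Purcell--Schleimer, the decomposition into regimes and the tangle-freeness step that removes the small-cusp obstruction are essential ideas missing from your plan.
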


Like in \cite{Bram_Jean}, we'll control the change in geometry when doing the Dehn filling of the cusps separately in three steps. First, we will deal with "small" cusps, that is, with cusps made of few octahedra around them, and apply Andreev's theorem \cite{Andreev} to make sure the length doesn't change "too much". Then, we'll treat the "medium" and "large" cusps, in two separate steps, using a result of Futer-Purcell-Schleimer \cite[Theorem 9.30]{Futer_Purcell}. 

We set some notation beforehand, recalling as well previous objects that will appear. We start with specifying the different types of cusps. We define the combinatorial length of a cusp as the number of octahedra forming it. Then,

\vspace*{0.3cm}
\begin{tabular}{@{} *5l @{}}    \toprule
\emph{Cusps of $Y_n$} & \emph{Notation} & \emph{Description}  \\\midrule
 Small    &  $c_1, \ldots, c_s$ & Of combinatorial length up to $\frac{1}{8}\log_3(n)$  \\ 
 Medium  &  $c_{s+1}, \ldots, c_m$ & Of combinatorial length between $\frac{1}{8}\log_3(n)$ and $n^{1/4}$ \\ 
 Large & $c_{m+1}, \ldots, c_n$ & Of combinatorial length bigger than $n^{1/4}$\\\bottomrule
 \hline
\end{tabular}

\vspace*{0.3cm}
With this, we distinguish the following manifolds and their parts.
\vspace*{0.3cm}

\begin{tabular}{@{} *5l @{}}    \toprule
\emph{Models} & \emph{Description}  \\\midrule
 $Y_n$ & Hyperbolic 3-manifold constructed in section \ref{gluing_oct} conditioned on not \\
 & having loops or bigons in its dual graph\\ 
 $Y_{n,1}$ & Union of all octahedra of $Y_n$ containing a small cusp \\ 
 $Y_{n,2}$ & Complement of $Y_{n,1}$ in $Y_n$ \\\midrule
 $K_n$ & Manifold obtained from $Y_n$ by filling the small cusps\\ 
 $K_{n,1}$ & Union of all octahedra of $K_n$ containing a medium cusp\\ 
 $K_{n,2}$ & Complement of $K_{n,1}$ in $K_n$\\
 $DK_n$ & Double of $K_n$ \\\midrule
 $Z_n$ & Manifold obtained from $K_n$ by filling the medium cusps \\
 $DZ_n$ & Double of $Z_n$ \\\midrule
 $M_n$ & Manifold obtained from $Z_n$ by filling the large cusps, homeomorphic to \\
 & the $M_n$ described in Section \ref{modelM_n} \\ 
 $DM_n$ & Double of $M_n$ \\\bottomrule
 \hline
\end{tabular}

\vspace{0.3cm}
\begin{proof}[Proof of Proposition \ref{length_gammas}] 
$ $\newline

\vspace{0.15cm}

\noindent
\textit{Filling small cusps}.
Here we use, more precisely, a consequence of Andreev's theorem (for the original theorem, see \cite{Andreev}). As stated in \cite{Bram_Jean}, this gives the following result:

\begin{lemma}[\cite{Bram_Jean},  Lemma 3.6] \label{lemma_Andreev}
There exists $J_0>0$ such that the following holds for any $\epsilon >0$ below the Margulis constant for $\mathbb{H}^3$, and any $\delta>0$: with probability at least $1-\delta$ in the model $Y_n$ for $n$ large enough, we have:
\begin{itemize}
    \item[1.] The $\epsilon$-thick part of the image of $Y_{n,1}$ in $K_n$ is $J_0$-bilipschitz to that of $Y_{n,1}$,
    \item[2.] The image of $Y_{n, 2}$ in $K_n$ is isometric to $Y_{n, 2}$.
\end{itemize}
\end{lemma}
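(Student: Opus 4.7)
The plan is to reverse the Dehn filling process in three successive stages, passing from $M_n$ back to $Z_n$, then to $K_n$, and finally to $Y_n$, lifting the geodesic $\gamma$ at each step and compounding multiplicative length distortions no worse than $(1+\epsilon_i)$ with $(1+\epsilon_1)(1+\epsilon_2)(1+\epsilon_3) \leq 1+\epsilon$. The tools differ depending on whether the cusps being unfilled are large, medium, or small.

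For the unfilling of large cusps (from $M_n$ to $Z_n$), the combinatorial length of each cusp is at least $n^{1/4}$, so a.a.s.\ the horocyclic length of the meridional slope on the maximal horospherical neighbourhood tends to infinity with $n$. We are then in the regime of Theorem 9.30 of \cite{Futer_Purcell}: outside a thin tubular neighbourhood of the core curves of the filling solid tori, the inclusion is $(1+\epsilon_1)$-bilipschitz with $\epsilon_1 \to 0$. The core curves have length tending to $0$ and are surrounded by disjoint Margulis tubes of diverging radii; a closed geodesic of length $\leq l_{max}$ cannot enter such a tube without being forced to wind around it many times and acquire length far greater than $l_{max}$. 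Therefore $\gamma$ lies entirely in the bilipschitz region, and tightening its image under the bilipschitz inverse map to a geodesic produces a closed geodesic $\gamma_1 \subset Z_n$ homotopic to $\gamma$ with length within a factor $(1+\epsilon_1)$ of $\mathrm{length}(\gamma)$.

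The unfilling of medium cusps (from $Z_n$ to $K_n$) proceeds identically: the combinatorial length is at least $\tfrac{1}{8}\log_3(n) \to \infty$, which by the analysis in \cite{Bram_Jean} ensures a.a.s.\ that the relevant slope lengths diverge. A second application of Futer-Purcell-Schleimer gives a $(1+\epsilon_2)$-bilipschitz equivalence away from the new core curves, and by the same Margulis-tube argument as before we lift to a closed geodesic $\gamma_2 \subset K_n$ homotopic to $\gamma_1$, with length controlled up to the factor $(1+\epsilon_2)$.

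The final step, from $K_n$ back to $Y_n$, is the main obstacle, and the available tool is Lemma \ref{lemma_Andreev}. That lemma provides only a $J_0$-bilipschitz equivalence on the thick part of $K_{n,1}$ and a genuine isometry on $K_{n,2}$; since $J_0$ is a fixed constant rather than $1+\epsilon_3$, we cannot afford for $\gamma_2$ to enter $K_{n,1}$. The plan is to show that a.a.s.\ $\gamma_2$ avoids $K_{n,1}$ entirely, so that part~2 of Lemma \ref{lemma_Andreev} lets us lift $\gamma_2$ to a closed geodesic $\gamma' \subset Y_n$ of length exactly $\mathrm{length}(\gamma_2)$. Two ingredients combine: first, Proposition \ref{lgrow} implies that any closed geodesic in $K_n$ of length at most $(1+\epsilon_1)(1+\epsilon_2) l_{max}$ corresponds to an equivalence class of words of length at most some constant $R$ depending only on $l_{max}$ and $\epsilon$, so the cycle representing $\gamma_2$ in the dual graph has combinatorial length at most $R$; second, a first-moment computation in the configuration model, in the same spirit as the expectation $\E[\Z]$ in the proof of Theorem \ref{cyclesYn}, shows that the expected number of cycles of length $\leq R$ sharing a vertex with any small-cusp subgraph tends to $0$ as $n\to\infty$, since the total number of small-cusp octahedra is polylogarithmic while the probabilistic cost of placing a short cycle at a prescribed set of vertices is polynomial in $1/n$. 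This a.a.s.\ rules out the bad event, completes the isometric reduction to $Y_{n,2}$, and delivers $\gamma'$ with the desired two-sided $(1+\epsilon)$ length bound. The hardest part will be making the last combinatorial estimate uniform over the (random) number of small cusps, which I expect to require a union bound together with the polylogarithmic upper bound on $|K_{n,1}|$ from \cite{Bram_Jean}.
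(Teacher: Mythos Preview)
Your proposal does not prove the stated Lemma~\ref{lemma_Andreev} at all. That lemma asserts a bilipschitz/isometric comparison between pieces of $Y_n$ and their images in $K_n$; it is a structural statement about the metric effect of filling the small cusps, proved in \cite{Bram_Jean} via Andreev's theorem, and the present paper simply quotes it without proof. What you have written is instead a proof sketch for Proposition~\ref{length_gammas} --- indeed you explicitly invoke Lemma~\ref{lemma_Andreev} as a black box in your third paragraph. So as a proof of the target statement there is nothing to evaluate: the proposal addresses a different result.

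If one reads your proposal as an attempt at Proposition~\ref{length_gammas}, it is close in spirit to the paper's argument but organised in reverse order (you unfill $M_n\to Z_n\to K_n\to Y_n$, the paper fills $Y_n\to K_n\to Z_n\to M_n$) and differs in one substantive point. For the small-cusp step you propose a direct first-moment bound on short cycles meeting the small-cusp subgraph; the paper instead appeals to $l$-tangle-freeness of random $4$-regular graphs (Lemma~\ref{tanglefree}) to rule out a short geodesic cycle coexisting with a short parabolic cycle in a neighbourhood of radius $\tfrac{1}{8}\log_3(n)$. These are morally the same mechanism, but the tangle-freeness route is cleaner because it avoids the uniformity issue you flag at the end: one does not need to control the total size of $Y_{n,1}$, only to know that two short cycles cannot sit in the same small ball. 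Your version would work too, but the union bound over a polylogarithmic-size set is an extra (if mild) step. The paper also handles medium cusps one at a time and proves separately (Claim~\ref{claim}) that they are a.a.s.\ disjoint so the bilipschitz constants do not accumulate --- a point your sketch glosses over.
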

We can think of $K_n \subset Z_n \subset M_n$, hence the same lemma holds in particular for the manifold $K_n$. 

\vspace{0.2cm}
So, let $\gamma$ be a closed geodesic in $K_n$. We look at its preimage $\tilde{\gamma}$ in $Y_n$. We have the following cases:

\begin{itemize}[leftmargin=*]
\item The curve $\tilde{\gamma}$ lies entirely in $Y_{n, 2}$: by the second point of Lemma \ref{lemma_Andreev}, its length will be exactly the same as the one of $\gamma$. Thus, we can take this as $\gamma'$.
\item  The curve $\tilde{\gamma}$ lies partly in $Y_{n, 1}$: To handle this case, we recall the notion of tangle-freeness for graphs. 

Let G be a multigraph. A neighbourhood of radius $l$ in G is the subgraph spanned by vertices at graphical distance at most $l$ from some fixed vertex. Then, 
\begin{defi}
A multigraph G is tangle-free if it contains at most one cycle (including loops and multiple edges). G is $l$-tangle-free if every neighbourhood of radius $l$ in H contains at most one cycle. Otherwise, we say that G is tangled or $l$-tangled.
\end{defi}
         
Now, it turns out that random 4-regular graphs -so in particular $G_{Y_n}$- are $l$-tangle free, for $l>0$ not too large. More precisely, 
\begin{lemma} [\cite{Tangle-free}, Lemma 9] \label{tanglefree}
Let $d\geq 3$, and $G_d$ be a random $d$-regular graph on $n$ vertices generated by the configuration model. Then, 
\[ \mathbb{P} [\ \textrm{$G_d$ is $l$-tangle-free}\ ] = 1 - O\Big(\frac{(d-1)^{4l}}{n}\Big).\]
\end{lemma}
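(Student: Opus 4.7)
The plan is a standard first-moment argument in the style of the expected-count computation in the proof of Theorem \ref{cyclesYn}. Define $X_l$ to be the number of connected subgraphs $H$ of $G_d$ with first Betti number at least $2$ (equivalently, containing two independent cycles) and with diameter at most $2l$. Then $G_d$ is $l$-tangled if and only if some vertex has at least two cycles in its $l$-neighborhood, which happens if and only if $X_l \geq 1$. Thus, by Markov's inequality, it is enough to bound $\mathbb{E}[X_l]$ by $O((d-1)^{4l}/n)$.

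To estimate this expectation, I would stratify by the isomorphism type of the tangle shape $H$. Exactly as in the computation of $p_{n,[w]}$ carried out earlier in the paper, a fixed labeled copy of a subgraph with $e(H)$ edges appears in the configuration model for $G_d$ with probability $\Theta(n^{-e(H)})$, while there are $O(n^{v(H)})$ labeled copies, so the expected number of copies of a fixed shape $H$ is $O(n^{v(H)-e(H)})$. Because $H$ has first Betti number at least $2$, one has $e(H)\geq v(H)+1$, so each shape contributes $O(n^{-1})$. I would then bound the number of tangle shapes of diameter at most $2l$ in a graph of maximum degree $d$ by a constant multiple of $(d-1)^{4l}$, using a BFS exploration from a chosen root vertex: two interacting cycles whose union fits in a radius-$l$ region have combined edge count at most $4l$, and each edge-insertion step in the exploration offers at most $d-1$ choices. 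Multiplying the per-shape estimate by the shape count gives $\mathbb{E}[X_l]=O((d-1)^{4l}/n)$, which is the required bound.

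The main obstacle I expect is the combinatorial counting that produces exactly the factor $(d-1)^{4l}$. The two cycles in a tangle may share vertices or edges, so one cannot simply treat them as independent objects and multiply counts. The cleanest way around this is to classify tangles by the topological type of their core, namely a theta graph, a figure-eight (two cycles meeting at a single vertex), or two disjoint cycles joined by a path, each of which is determined up to embedding by a constant number of parameters. One then counts the ambient realizations of each core via BFS, absorbing the constants from the three topological types into the implicit $O$. A secondary technical point is that the $\Theta(n^{-e(H)})$ asymptotic for the configuration model is only uniform when $e(H)$ is substantially smaller than $n$; this restricts the range of $l$ for which the bound is meaningful, but precisely covers the regime $l=O(\log n)$ that is needed when this lemma is invoked in the proof of Proposition \ref{length_gammas}.
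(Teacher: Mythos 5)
The paper does not prove this lemma; it imports it from the cited reference. Your first-moment strategy is a reasonable route, and the broad structure is sound, but two of your intermediate claims are wrong in a way that happens to compensate, and a third step has a genuine gap. First, the probability that a fixed labeled copy of $H$ appears in the configuration model is \emph{not} $\Theta(n^{-e(H)})$ with an implied constant independent of $H$: summing over the half-edge assignments at each vertex contributes a prefactor of $\prod_u \frac{d!}{(d-\deg_H(u))!}$, which for theta/barbell/figure-eight cores is $\Theta\big((d-1)^{e(H)}\big)$, so the per-shape expected count is really $\Theta\big((d-1)^{e(H)}/n\big)$. Since $e(H)$ is of order $l$, this is precisely where the exponential-in-$l$ growth lives. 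Correspondingly, the number of isomorphism types of such cores with $O(l)$ edges is only \emph{polynomial} in $l$ (it is determined by a bounded number of arm lengths), not $\Theta((d-1)^{4l})$; your BFS count of $d-1$ choices per edge is a restatement of the half-edge prefactor, not a shape count. The two misattributions cancel, so the product has the right exponential order, but the accounting is inverted.

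The remaining gap is in the passage from $l$-tangled to $X_l \geq 1$. A theta graph with three arms of length $2l$ has intrinsic diameter $2l$ but $6l$ edges and does not fit inside any ball of radius $l$, so the class you define (Betti $\geq 2$ and diameter $\leq 2l$) overshoots: its first moment is $O(\mathrm{poly}(l)\,(d-1)^{6l}/n)$, not $O((d-1)^{4l}/n)$. To obtain the exponent $4l$ one should instead count Betti-$\geq 2$ subgraphs contained in some $B_l(v)$, and observe that any such subgraph contains a Betti-$2$ core lying inside the union of the four BFS-tree paths from $v$ to the endpoints of the two excess edges, together with those two edges; this subgraph has at most $4l+2$ edges and supplies the bound you asserted but did not justify. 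A cleaner route, avoiding shape enumeration altogether, is to run the BFS exploration from each vertex and bound $\mathbb{P}[\text{at least two excess pairings in }B_l(v)]$ by a pair count over the at most $O((d-1)^l)$ pairings revealed, each of which is excess with probability $O((d-1)^l/n)$; this yields $O((d-1)^{4l}/n)$ with no stray polynomial factors. In the regime $l = O(\log_3 n)$ needed for Proposition \ref{length_gammas} your weaker bound would still tend to zero, but it does not establish the lemma as stated.
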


Using this property, we argue as follows: we have that the preimage of $\gamma$, $\tilde{\gamma}$, homotops to a cycle in the dual graph $G_{Y_n}$. As the curve lies in some part of $Y_{n, 1}$, its cycle intersects with the one representing the parabolic element that goes around a small cusp in $Y_{n, 1}$.  On the other hand, since $l(\gamma)<l_{max}$, by Proposition \ref{lgrow} we deduce that the cycle in $G_{Y_n}$ corresponding to $\tilde{\gamma}$ has length bounded above by $K + \frac{1}{8}\log_3(n)$, where $K>0$.
Therefore, both cycles are inside a neighbourhood of radius $l\leq \frac{1}{8}\log_3(n)$ in $G_{Y_n}$.

Nevertheless, we know that the dual graph $G_{Y_n}$ is $l$-tangle free for $l<\frac{1}{4}\log_3(n)$ (Lemma \ref{tanglefree}), which means that as $n\rightarrow\infty$, there won't be more than one cycle in any neighbourhood of this radius w.h.p. This tells us, therefore, that a.a.s this case will not occur.

\end{itemize}

\vspace{0.2cm}
\noindent
\textit{Filling medium cusps}. 
Now, let $\gamma$ be a closed geodesic in $Z_n$. A priori Lemma \ref{lemma_Andreev} can also be applied. So, as before, we consider its preimage $\tilde{\gamma}$ in $K_n$, and treat two possible scenarios:
\begin{itemize}[leftmargin=*]
\item  The curve $\tilde{\gamma}$ lies entirely in $K_{n, 2}$: by the second point of Lemma \ref{lemma_Andreev}, its length will be exactly the same as the one of $\gamma$.

\item The curve $\tilde{\gamma}$ lies partly in $K_{n, 1}$: Lemma \ref{lemma_Andreev} tells us that the part lying in $K_{n, 2}$ will remain exactly of the same length as in $\gamma$. On the other hand, we also have that the $\epsilon$-thick part of the image of $K_{n,1}$ in $Z_n$ is $J_0$-bilipschitz to that of $K_{n,1}$, for some $J_0>0$. However, as one gets very close to the cusp, this bilipschitz constant may degenerate, so we cannot get explicit control on the change in length.
\end{itemize}
Therefore, to treat this case, we'll use the result of Futer-Purcell-Schleimer stated next.

\begin{defi}
Let N be a hyperbolic 3-manifold with rank-two cusps $C_1, \ldots, C_n$. Consider a slope $s_j$ for each cusp torus $\partial C_j$. Then, the normalized length of $s_j$ is 
\[ L_j = L(s_j) = \frac{len(s_j)}{\sqrt{area(\partial C_j)}}, \]
where $len(s_j)$ is the length of a geodesic representative of $s_j$ on $\partial C_j$.
                
Let $s = (s_1, \ldots, s_n)$ be the vector of all slopes. We define the total normalized length $L = L(s)$ via the formula
\[\frac{1}{L^2} = \sum_{j=1}^{n} \frac{1}{L_j^2}.\]
\end{defi}

Then, by Futer-Purcell-Schleimer, we have the following:
\begin{theorem}[\cite{Futer_Purcell}, Theorem 9.30] \label{purcell}
Fix $0<\epsilon \leq \log{3}$. Let $M$ be a finite-volume hyperbolic 3-manifold and $\Sigma$ a geodesic link in $M$. Let $N = M - \Sigma$. Suppose that in the complete structure on $N = M - \Sigma$, the total length of the meridians of $\Sigma$ satisfies:
\[ L^2 \geq \frac{2\pi \cdot 6771\cosh{(0.6\epsilon + 0.1475)}^5}{ \epsilon^5} + 11.7.\]

\vspace{0.1cm}
Then there is a cone-deformation $M_t$ connecting the complete hyperbolic metric $g_0$ on $N$ to the complete hyperbolic metric $g_{4\pi^2}$ on $N$. Furthermore, the cone deformation gives a natural identity map $id:(M - \Sigma,g_0) \rightarrow (M - \Sigma,g_{4\pi^2})$, such that $id$ and $id^{-1}$ restrict to:
\[ id\restriction_{N^{\geq \epsilon}} : N^{\geq \epsilon} \xhookrightarrow{} M^{\geq \frac{\epsilon}{1.2}}, \quad  id\restriction_{M^{\geq \epsilon}}: M^{\geq \epsilon} \xhookrightarrow{} N^{\geq \frac{\epsilon}{1.2}},\]
that are $J$-bilipschitz inclusions for 
\[ J = exp\bigg(\frac{11.35 l}{e^{5/2}}\bigg) \quad \textrm{and} \quad l \leq \frac{2\pi}{L^2 - 11.7}. \]
\end{theorem}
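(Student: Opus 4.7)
The statement is a quantitative cone-deformation / drilling theorem of the sort pioneered by Hodgson-Kerckhoff and made effective by Bromberg and Brock-Bromberg. My plan is to construct a one-parameter path of hyperbolic cone structures interpolating between $g_0$ and $g_{4\pi^2}$ on $N = M \setminus \Sigma$, and then to bound the pointwise distortion of the identity map on $N$ by integrating a derivative estimate along the path.

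\emph{Setup.} Parametrize by $t \in [0, 4\pi^2]$, interpreting $t$ as the squared cone deficit along $\Sigma$: at $t = 0$ the cone angle is $0$ (a rank-two cusp, i.e.\ the complete structure $g_0$ on $N$), and at $t = 4\pi^2$ the cone angle is $2\pi$ (the smooth metric on $M$, whose restriction to $N$ is $g_{4\pi^2}$). By the Hodgson-Kerckhoff local rigidity theorem for cone manifolds, the deformation space of hyperbolic cone structures on $M$ with singular locus $\Sigma$ is locally parametrized smoothly by the cone angle, giving a local smooth family $g_t$ wherever the tube around $\Sigma$ does not collapse.

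\emph{Step 1: persistence of the tube.} Start at $t = 4\pi^2$ and deform backwards in $t$. The only obstruction to extending the path is collapse of the Margulis tube around $\Sigma$; Hodgson-Kerckhoff derived an ODE governing the tube radius $R(t)$, driven by the $L^2$ norm of the harmonic representative of the infinitesimal cone-angle deformation. The hypothesis on $L^2$ (equivalently, the hypothesis that the meridians are geometrically long in the complete cusped structure at $t=0$) is tuned so that this ODE can be integrated all the way from $t = 4\pi^2$ down to $t = 0$ while keeping $R(t)$ bounded below by an explicit function of $\epsilon$. The somewhat mysterious numerical constants $6771$, $0.6\epsilon + 0.1475$, and $11.7$ arise by carefully tracking every constant in the Hodgson-Kerckhoff differential inequality (and in Bromberg's extension to the cusped setting), with margin built in so that the inequality can be solved globally.

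\emph{Step 2: pointwise inflexibility and integration.} With a uniform tube $T_\Sigma$ of radius bounded below in terms of $\epsilon$, the infinitesimal deformation $\dot g_t$ on $N \setminus T_\Sigma$ is represented by a divergence-free, trace-free, harmonic strain field $\omega_t$. Brock-Bromberg geometric inflexibility bounds the $L^2$ norm of $\omega_t$ in terms of boundary data on $\partial T_\Sigma$, and the standard exponential decay of harmonic strain fields away from their sources upgrades this to a pointwise $L^\infty$ bound at each $x \in N$, decaying in $d(x, T_\Sigma)$. Integrating $\|\omega_t\|_\infty$ over $t \in [0, 4\pi^2]$ at a fixed basepoint $x \in N^{\geq \epsilon}$ produces the pointwise bilipschitz distortion between $g_0$ and $g_{4\pi^2}$ at $x$; carrying the constants through yields $J = \exp(11.35\,l/e^{5/2})$ with $l \leq 2\pi / (L^2 - 11.7)$. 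The factor $1.2$ in the thick-part inclusion $N^{\geq \epsilon} \hookrightarrow M^{\geq \epsilon/1.2}$ absorbs the (small) change in injectivity radius under the $J$-bilipschitz identification, ensuring both the identity and its inverse send thick points to (slightly less) thick points.

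\emph{Main obstacle.} The analytic heart of the proof is Step 1: the original Hodgson-Kerckhoff persistence argument is essentially qualitative, and extracting the explicit numerical threshold in the hypothesis requires sharp integration of the tube-radius ODE together with the cusped-case refinements of Bromberg. Once the tube radius is bounded below uniformly along the deformation, the remaining steps are well-understood harmonic-analysis estimates whose constants can be tracked mechanically.
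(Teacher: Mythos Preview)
The paper does not prove this statement at all: Theorem~\ref{purcell} is quoted verbatim from Futer--Purcell--Schleimer \cite[Theorem~9.30]{Futer_Purcell} and used as a black box in the proof of Proposition~\ref{length_gammas}. There is therefore no ``paper's own proof'' to compare against.

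That said, your sketch is a faithful outline of how Futer--Purcell--Schleimer actually establish the result in their paper: they do build a one-parameter family of cone metrics via the Hodgson--Kerckhoff deformation theory, control the tube radius along the path by integrating an effective differential inequality (this is where the explicit numerical constants come from), and then obtain the bilipschitz bounds from pointwise decay estimates for harmonic strain fields on the thick part. So as a summary of the external proof your proposal is accurate, but for the purposes of this paper no proof is required---the theorem is simply invoked.
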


For convenience of the following arguments, we consider as our cusped manifold the double of the manifold $K_n$, $DK_n$, which is a non-compact manifold without boundary. Then, cusps neighbourhoods correspond to (thickened) tori, which get replaced by solid tori after compactification. 
We want to do this filling with Margulis tubes. We recall that a Margulis tube of radius $r>0$ is a tubular neighbourhood of a closed geodesic $\alpha$. Its radius $r$ denotes the distance between the core geodesic of the tube $\alpha$ and its boundary. In $\mathbb{H}^3$, it's an infinite solid cylinder, while in a hyperbolic 3-manifold, it's diffeomorphic to a solid torus. We'll denote it by $T_r(\alpha)$. Although a priori they might not glue very nicely, Theorem \ref{purcell} assures enough control on the change in geometry, so that the subsequent model is a good representation for the geometry of the hyperbolic metric in $DZ_n$.

Another important comment is that, when considering the doubles $DK_n$, $DZ_n$ new closed geodesics appear. However, our asymptotics only concern closed geodesics lying in one of the copies of the single manifolds, so we will only consider these.
                
As a final remark, note that in order to get sharper bounds, we will compactify the medium cusps individually one by one, and apply the previous result at each step, that is, with $L = L_j$, for $j=s+1, \ldots, m$. Although that could make the bilipschitz constants accumulate, this will not occur as the medium cusps don't intersect a.a.s. The proof of this claim is at the end of the section (Claim \ref{claim}).

So, let $\epsilon>0$ be some number arbitrarily small.
For the compactification of each medium cusp, we take a Margulis tube $T_{r(\epsilon)}(\alpha)$ which separates the $\epsilon$-thin and $\epsilon$-thick part of $DZ_n$. In this way, after the filling of all of them, we get that $DZ_n^{<\epsilon} = \sqcup_{j=1}^{m} T_{r(\epsilon)}(\alpha_j)$, corresponding to small and medium cusps.

Note that the only closed geodesics that lie on $DZ_n^{<\epsilon}$ are the core geodesics $\alpha_j$. Indeed, these Margulis tubes are solid tori, so their fundamental groups are isomorphic to $\mathbb{Z}$. In each of them, then, there is only one free-homotopy class of closed curves -form by the core curve and its powers- in which the $\alpha_j$ is the geodesic representative. On the other hand, we must recall that we are interested only in closed geodesics lying in one copy of $Z_n$, in which these core geodesics don't exist. Therefore, they can be dismissed for our argument. 

This yields, then, that the only geodesics object of study are either the ones lying in the $\epsilon$-thick part of $DZ_n$, or the ones having a part inside some Margulis tube corresponding to a medium cusp.

For the first case, we can use directly Theorem \ref{purcell}. This gives us a bilipschitz inclusion between $DZ_n^{\geq \epsilon}$ and $DK_n^{ \geq \frac{\epsilon}{1.2}}$, from which we obtain that the length of the preimage $\tilde{\gamma} \in DK_n$ of the closed geodesic $\gamma \in DZ_n$ might increase at most by a factor $J_{\epsilon, L}\in (1,1.0005)$. Thus, if take as $\gamma'$ the geodesic in the homotopy class of $\tilde{\gamma}$ in $DK_n$, we obtain:
\[length(\gamma') \leq J_{\epsilon, L} \cdot length(\gamma).\]

On the other hand, if we measure now the length of $\textrm{Im}(\gamma')$ in the metric $DZ_n$, we have, by this same result, that its length may decrease by at most $\frac{1}{J_{\epsilon, L}}$. 
As $\gamma$ is the geodesic in the homotopy class of $\textrm{Im}(\gamma')$ in $DZ_n$, we obtain:
\[\frac{1}{J_{\epsilon, L}} \cdot length(\gamma) \leq length(\gamma'),\]
where, in both inequalities, $J_{\epsilon, L}\rightarrow 1$ as $n\rightarrow\infty$.

Finally, let's study the second case. Let $\gamma\in DZ_n$ such that it lies partly in some Margulis tube $T_{r(\epsilon)}(\alpha_j)$, corresponding to a medium cusp. We will see that this curve cannot be in the "very thin" part of $DZ_n$.

\begin{figure}
    \centering
    \includegraphics[scale=0.35]{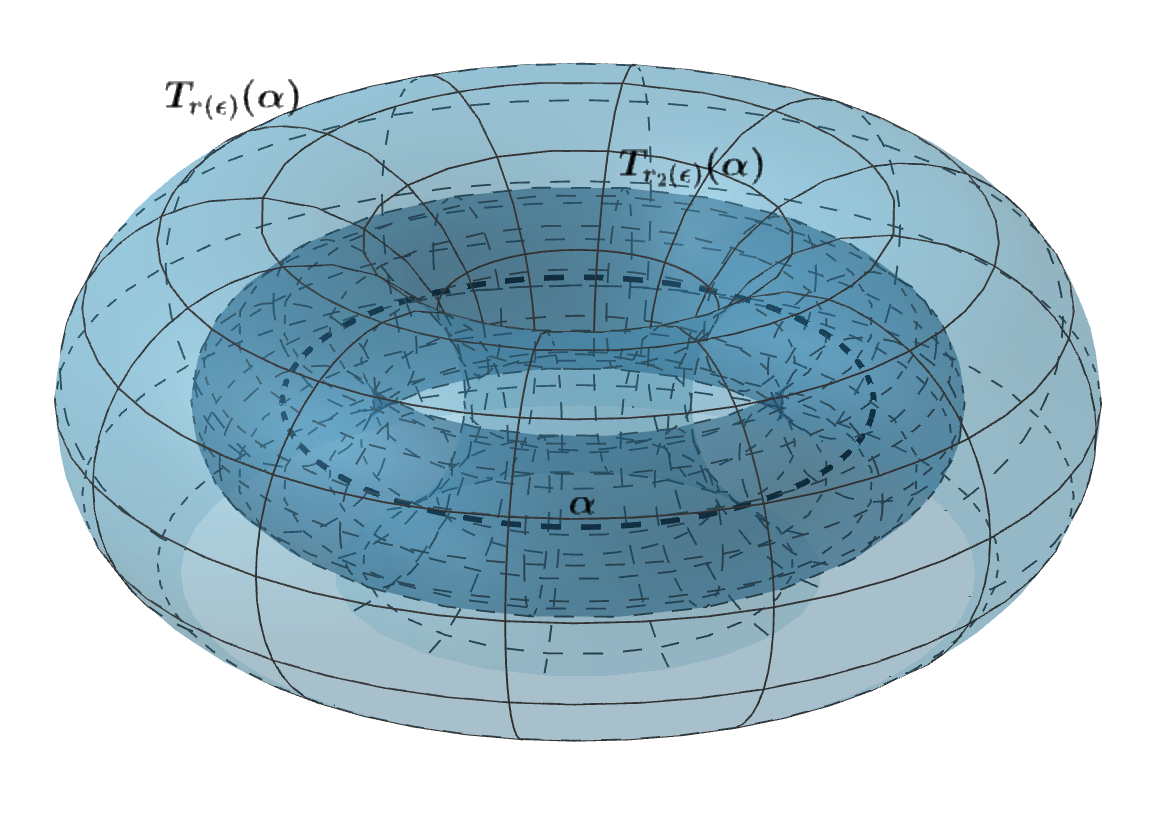}
    \caption{Nested Margulis tubes of radius $r(\epsilon$) and $r_2(\epsilon)$ around the core geodesic $\alpha$.}
    \label{Margulistubes}
\end{figure}

For this, we take $0<\delta<\epsilon$ such that:
\[ \arccosh\bigg(\frac{\epsilon}{\sqrt{7.256 \delta}}\bigg)-0.0424 > l_{max}.\]
We consider also the nested Margulis tube $T_{r(\delta)}(\alpha_j)$ of radius $r(\delta)>0$ around the core curve $\alpha_j$, which contains the $\delta$-thin part of $DZ_n$ around this core geodesic.

Now, suppose that $\gamma$ enters the $\delta$-thin part of that manifold. Since it lies only partly in the $\delta$-thin part on $DZ_n$, that means it has to exit both $T_{r(\delta)}(\alpha_j)$ and $T_{r(\epsilon)}(\alpha_j)$ at some point. Therefore, its length needs to be larger than twice the distance between the boundaries of the two tubes. 

Theorem 1.1 from Futer-Purcell-Schleimer \cite{DistMargulistube} gives lower and upper bounds on the distance between these two nested Margulis tubes, provided that the length of the core geodesic is smaller than $\delta$. Thus, in order to be able to apply it, we first check this condition. Using \cite[Corollary 6.13]{Futer_Purcell}, we have that the length of the core curve $\alpha_j$ is bounded by: 
\[ l(\alpha_j) < \frac{2\pi}{L^2 - 28.78},\]
where $L$ is the total normalized length of the compactified cusp. Since here we're dealing with medium cusps, we have that $L \geq \frac{1}{8}\log_3(n)$. Hence, 
\[ l(\alpha_j) \leq \frac{2\pi}{\frac{1}{8}\log_3(n)^2 - 28.78} < \frac{16\pi}{\log_3(n)^2}, \]
which is less that $\delta$ for $n$ large enough. Therefore, we can apply \cite[Theorem 1.1]{DistMargulistube}, that yields: 
\[ d(\partial T_{r(\delta)}(\alpha), \partial T_{r(\epsilon)}(\alpha)) \geq \arccosh\bigg(\frac{\epsilon}{\sqrt{7.256 \delta}}\bigg)-0.0424 > l_{max}.\]

This leads to a contradiction, since for $\gamma$ to enter into any of the smallest tubes $T_{r(\delta)}(\alpha_j)$, for $j=s+1, \ldots, m$, its length needed to be larger than twice that distance, yet the length of the curve $\gamma$ is bounded by $l_{max}$. Therefore, we conclude that asymptotically as $n\rightarrow\infty$, $\gamma$ won't enter the $\delta$-thin part of $DZ_n$. 

We can apply, consequently, the bilipschitz equivalences of Theorem \ref{purcell} to the $\delta$-thick part of $DZ_n$. In the same way as before, by taking as $\gamma'$ the geodesic in the homotopy class of the preimage of $\gamma$ in $DK_n$, we get:
\[ \frac{1}{J_{\delta, L}} \cdot length(\gamma) \leq length(\gamma') \leq J_{\delta, L} \cdot length(\gamma),\]
where $J_{\delta, L} \rightarrow 1$ as $n\rightarrow \infty$. 

After the filling of all medium cusps, we obtain then that the length of $\gamma'$ will be bounded by the length of $\gamma$ times some product of bilipschitz constants, all tending to 1 as $n\rightarrow \infty$.

\vspace{0.3cm}
\noindent
\textit{Filling large cusps}. Finally for this step, we rely entirely on the previously mentioned result of Futer-Purcell-Schleimer. 

The structure of the argument is exactly like in the case for medium cusps. The reason, then, for not treating them at the same time is that, in that case, the conditions needed to apply Theorem \ref{purcell} wouldn't be satisfied. Indeed, if we considered all cusps of combinatorial length $\geq \frac{1}{8}\log_3(n)$, we would have that the total normalised length would be smaller than the lower bound specified in the statement.

So, as before, we take as our cusped manifold the double of $Z_n$, $DZ_n$. Equally, the compactification process consists of taking out a horospherical neighbourhood of each cusp, 
and filling it by gluing a Margulis tube. This time, however, the procedure is done for all cusps at once, as we don't have the certainty that large cusps don't intersect each other. Then, the resulting manifold, made of the $\epsilon$-thick part of $DZ_n$, for some $\epsilon>0$ -which is compact- and the Margulis tubes attached along the boundary, is a reasonable model for the compact manifold $DM_n$.

The two same cases for the position of $\gamma$ in $DM_n$ appear, and following the same argument, we get that there is a closed geodesic $\gamma'$ in $DZ_n$ such that their lengths differ by a multiplicative constant that tends to 1 as $n\rightarrow\infty$.

\vspace{0.3cm}
All in all, we obtain that for any closed geodesic $\gamma$ in $M_n$ of uniformly bounded length, we can find a closed geodesic $\gamma'$ in $Y_n$ such that its length varies from the one of $\gamma$ by at most some product of bilipschitz constants that tend to 1 as $n\rightarrow\infty$. Hence, this gives Proposition \ref{length_gammas}.

\end{proof}

Before moving into the next point, let's prove the remaining claim, stated before:
\begin{claim} \label{claim}
    Medium cusps don't intersect each other a.a.s. Consequently, when compactifying them one by one, the bilipschitz constants  won't accumulate.
\end{claim}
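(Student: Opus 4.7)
My plan is to establish the claim via a first-moment computation in the configuration model. Writing $X$ for the number of unordered pairs of medium cusps that share an octahedron, I will show that $\mathbb{E}[X] = o(1)$; Markov's inequality then forces $\mathbb{P}[X \geq 1] \to 0$, giving the first half of the claim. The second half (non-accumulation of bilipschitz constants) is an immediate consequence, since the cone-deformation from Theorem~\ref{purcell} applied to a cusp only modifies the geometry inside a neighborhood of the associated Margulis tube, so disjoint cusps give independent deformations.

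First, I would encode the cusps combinatorially. A cusp of $Y_n$ corresponds to a cycle in the \emph{link graph} $H_n$ whose vertices are the $6n$ ideal corners of the octahedra and whose edges are induced by the random face identifications. The combinatorial length of a cusp equals the length of its cycle in $H_n$, and two cusps share an octahedron exactly when their cycles pass through two of the six corners belonging to a common octahedron.

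The key probabilistic input is the estimate: for a fixed ideal corner $v$ of $H_n$ and a fixed $k$ in the medium range $[\tfrac{1}{8}\log_3 n,\, n^{1/4}]$,
\[
\mathbb{P}[v \text{ lies in a cycle of length } k \text{ in } H_n] = O(1/n),
\]
uniformly in $k$. This is analogous to the fact that in a uniformly random permutation on $N$ elements, each element lies in a $k$-cycle with probability exactly $1/N$; here the underlying structure is a bounded-degree configuration model on $6n$ vertices with $k \leq n^{1/4} \ll n$, and the estimate follows by the same enumeration as in the proof of Theorem~\ref{cyclesYn}. Given this, for a fixed octahedron $O$ and fixed pair $v_1, v_2$ among its six corners,
\[
\mathbb{P}[v_1,\, v_2 \text{ lie in distinct medium cusps}] \;\leq\; \sum_{k_1, k_2 = \tfrac{1}{8}\log_3 n}^{n^{1/4}} O(n^{-2}) \;=\; O(n^{1/2} \cdot n^{-2}) \;=\; O(n^{-3/2}).
\]
A union bound over the $n$ octahedra and the $\binom{6}{2}=15$ pairs of corners then yields
\[
\mathbb{E}[X] \;\leq\; 15n \cdot O(n^{-3/2}) \;=\; O(n^{-1/2}) \;\longrightarrow\; 0.
\]

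The main obstacle I anticipate is justifying the uniform bound $\mathbb{P}[v \in \text{cycle of length } k] = O(1/n)$ rigorously: the graph $H_n$ is not quite a vanilla random regular graph, since its degree sequence and the gluing rule are inherited from the same random pairing/orientation process used to build $G_{Y_n}$. The argument should closely mirror the first-moment computation for $a_{n,[w]} \cdot p_{n,[w]}$ in the proof of Theorem~\ref{cyclesYn} (counting labellings of a $k$-cycle through $v$ and dividing by the appropriate $(4n-1)!!$-type denominator), simply transported from the dual graph $G_{Y_n}$ to the link graph $H_n$; once this is in place, the rest of the argument is just a summation and a union bound.
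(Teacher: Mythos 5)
The paper's own proof of this claim is a one-step citation: it invokes Theorem~2.4(d) of \cite{Bram_Jean}, which states directly that for any $K,L=o(n^{1/3})$ the expected number of pairs of edges of the tetrahedral complex of sizes $\leq K$ and $\leq L$ incident to a common tetrahedron tends to zero. Since medium cusps have combinatorial length in $[\tfrac18\log_3 n, n^{1/4}]$ and $n^{1/4}=o(n^{1/3})$, this gives the first half of the claim immediately, and the second half follows just as you argue. Your route is genuinely different --- you attempt a from-scratch first-moment calculation instead of citing the ready-made bound --- so the two approaches should be compared.

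The structure of your argument is sound, but there is a real gap in the key estimate $\mathbb{P}[v\in\text{$k$-cycle in }H_n]=O(1/n)$ \emph{uniformly in $k$}, and your proposed justification of it does not work as written. You point to the enumeration $a_{n,[w]}\cdot p_{n,[w]}$ from the proof of Theorem~\ref{cyclesYn}, but that computation lives on the $4$-regular dual graph $G_{Y_n}$, where the expected number of $k$-cycles grows like $3^{k}/(2k)$ and a fixed vertex lies in one with probability of order $3^{k}/n$ --- which is \emph{not} $O(1/n)$ and in fact blows up already for $k$ of order $\log n$, let alone $k=n^{1/4}$. The analogy to random permutations (where $\mathbb{P}[v\in\text{$k$-cycle}]=1/N$ exactly) is the right intuition, but it holds precisely because a permutation is a $2$-regular structure, not merely a ``bounded-degree'' one. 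The saving observation you would need, and which your writeup does not make, is that $H_n$ really is $2$-regular: each ideal corner of an octahedron lies on exactly two of the four glued faces, so each vertex of $H_n$ has degree two. Even granted that, $H_n$ is not a vanilla $2$-regular configuration model --- each random face pairing simultaneously creates \emph{three} edges of $H_n$, with correlated endpoints and orientations --- so the uniform $O(1/n)$ bound still needs a dedicated argument, which is effectively what \cite{Bram_Jean} already supplies in their Section~2. In short: your plan is plausible and, if carried out carefully for the actual $2$-regular, triple-correlated edge structure, would re-derive the relevant part of \cite[Theorem 2.4]{Bram_Jean}; but as stated, the step you flag as the ``main obstacle'' is genuinely unproven, and the comparison you lean on (the enumeration in Theorem~\ref{cyclesYn}) would give the wrong answer.
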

\begin{proof}
Theorem 2.4 (d) from \cite{Bram_Jean} claims that, for any $K,L = o(n^{1/3})$, the expected number of pairs of edges of size $\leq K$ and $\leq L$ that are incident to a common tetrahedron in the model $M_n$ tends to zero, as $n\rightarrow \infty$. In this setting, having an edge of size $k$ implies having a cusp of length $k$, as there are $k$ glued tetrahedra around it. Thus, the result tells us that the expected number of pairs of intersecting cusps of lengths $\leq C = o(n^{1/3})$ is asymptotically negligible.
                
This fact, in particular, assures that the bilipschitz constants resulting at each step will not accumulate for the following: when applying Theorem \ref{purcell} to each medium cusp, the bilipschitz equivalence will apply only in the corresponding part of $K_{n_1}$, which is not incident to any other. Then, outside each compactified cusp (i.e in $K_{n_2}$)
the isometry given by Lemma \ref{lemma_Andreev} will still hold. 
\end{proof}

\subsubsection{Non-homotopy of closed geodesics}

Once we have this equivalence on the lengths, it rests to rule out the only setback that could arise: the possibility that closed geodesics after compactification are homotopically trivial, or get homotoped into one another. Hence, we prove now our second main point.
\begin{lemma} \label{homotopy}
Let $l_{max}>0$. A.a.s as $n\rightarrow\infty$, the images in $M_n$ of any two non-homotopic closed geodesics in $Y_n$ -of lengths bounded by $l_{max}$- are also non-homotopic, neither homotopically trivial.
\end{lemma}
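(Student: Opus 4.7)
The plan is to establish both assertions---non-triviality of images of nontrivial short geodesics, and distinctness of images of non-homotopic ones---simultaneously, by proving that the induced map on free homotopy classes of closed curves of length $\leq l_{max}$ is injective (the trivial class included). The approach mirrors the three-step structure of Proposition \ref{length_gammas}: small, medium, and large cusps are treated separately. By Proposition \ref{lgrow}, any closed geodesic $\gamma'$ in $Y_n$ with $l(\gamma')\leq l_{max}$ has dual-graph word length bounded by some constant $r=r(l_{max})$ independent of $n$, so its free homotopy class is represented by a cycle of length at most $r$ in $G_{Y_n}$.

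For small cusps, I use tangle-freeness exactly as in Proposition \ref{length_gammas}. Each small cusp contributes to the kernel of $\pi_1(Y_n)\to\pi_1(K_n)$ a meridian represented combinatorially by a cusp cycle of length $\leq \frac{1}{8}\log_3(n)$. If two short geodesics $\gamma'_1,\gamma'_2$ (one possibly the trivial loop) became homotopic in $K_n$, their combinatorial representatives would have to interact with such a cusp cycle, producing at least two disjoint cycles within a ball of radius at most $r+\frac{1}{8}\log_3(n)<\frac{1}{4}\log_3(n)$ in $G_{Y_n}$ for $n$ large. By Lemma \ref{tanglefree}, $G_{Y_n}$ is a.a.s $\frac{1}{4}\log_3(n)$-tangle-free, so this cannot happen, and small-cusp filling is injective on short homotopy classes. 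For medium and large cusps, I invoke Theorem \ref{purcell} as in Proposition \ref{length_gammas}: choosing $\delta>0$ so that $2[\arccosh(\epsilon/\sqrt{7.256\delta})-0.0424]>l_{max}$ forces every short geodesic to remain in the $\delta$-thick part, and the bilipschitz inclusions between thick parts (with constants tending to $1$) are then $\pi_1$-injective, so nontrivial classes remain nontrivial and non-homotopic classes remain non-homotopic.

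The hardest step is making the $\pi_1$-injectivity of the thick-part inclusion precise. A priori, an annular homotopy in $M_n$ between the images of $\gamma'_1,\gamma'_2$ could leave the thick part and pass through a Margulis tube $T_{r(\epsilon)}(\alpha_j)$. I plan to address this by a standard transversality argument: put the annulus in general position with respect to $\bigcup_j \alpha_j$, so that the one-dimensional preimage of $\bigcup_j \alpha_j$ consists of arcs and simple loops, and then compress innermost disks. Each compression replaces a subarc of the annulus by a disk meridional to some $\alpha_j$, but a short closed curve modified by such a meridian still has word length bounded by $r$; for small cusps this violates tangle-freeness, while for medium and large cusps the core length bound $l(\alpha_j)<16\pi/\log_3(n)^2$ together with the tube radius $r(\epsilon)$ rule it out. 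After all compressions the annulus lies in the $\delta$-thick part, where the bilipschitz inverse provides a homotopy in $Y_n$, contradicting the assumption that $\gamma'_1$ and $\gamma'_2$ are not homotopic there.
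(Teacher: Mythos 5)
Your handling of the ``not homotopically trivial'' claim matches the paper's: both reduce the question to tangle-freeness of $G_{Y_n}$ via Lemma~\ref{tanglefree}. For the ``not homotopic to each other'' claim, however, you take a genuinely different route, and it has gaps that I don't think you can close with the tools you invoke.

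The paper's argument for the second claim is probabilistic rather than topological. It doubles the manifolds, passes to the infinite cyclic cover $\mathbb{H}^3/\langle\gamma\rangle$ associated to the common geodesic representative $\gamma$, and uses the exponential ``flaring'' of this solid-torus cover together with Proposition~\ref{length_gammas} to conclude that the lifts of $\phi(\gamma_1)$ and $\phi(\gamma_2)$ are at uniformly bounded distance from the core, hence from each other. Projecting back, $\phi(\gamma_1)$ and $\phi(\gamma_2)$ are at bounded distance in $DM_n$, so the corresponding cycles in the dual graph are at bounded distance; the contradiction then comes from Brooks--Makover \cite[Lemma 5.5]{BrooksMakover}, which says two bounded-length cycles at bounded distance in a random regular graph are asymptotically rare. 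This last combinatorial input is essential and does not appear in your argument.

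Your compression argument, as written, has several problems. First, a dimension count: the annulus is $2$-dimensional and $\bigcup_j\alpha_j$ is $1$-dimensional inside a $3$-manifold, so a transverse preimage of $\bigcup_j\alpha_j$ is $0$-dimensional (finitely many points), not the ``arcs and simple loops'' you describe; presumably you mean the preimage of the tori $\bigcup_j\partial T_{r(\epsilon)}(\alpha_j)$, but even then the argument needs to be carried out for a \emph{singular} annulus (a map, not an embedding), which puts you in loop-theorem territory rather than elementary innermost-disk surgery. Second, and more substantively, the assertion ``the bilipschitz inclusions between thick parts are $\pi_1$-injective'' does not address the actual issue: $\pi_1(M_n^{\geq\epsilon})\to\pi_1(M_n)$ is \emph{not} injective (the meridians of the filled tubes die), so the composition $\pi_1(Y_n)\to\pi_1(M_n)$ has nontrivial kernel, and the content of the lemma is precisely that no two short classes become identified through this kernel. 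Third, for medium and large cusps your mechanism for ruling out an essential interaction with a Margulis tube is the core length bound $l(\alpha_j)<16\pi/\log_3(n)^2$; but that bound controls the length of a \emph{curve} forced into the deep thin part, not the behaviour of a two-dimensional homotopy, and an annulus can sweep through a Margulis tube without either boundary component entering the deep thin part. Tangle-freeness only saves you for small cusps because their combinatorial length is $O(\log n)$; medium and large cusp cycles are too long for $\frac14\log_3(n)$-tangle-freeness to apply, which is exactly why the paper resorts to the cover argument and the Brooks--Makover estimate instead. To repair your proof you would need to extract, from the hypothetical annular homotopy, a bounded-distance relation between the two cycles in the dual graph and then invoke a rarity statement of Brooks--Makover type --- at which point you have essentially reproduced the paper's proof.
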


\begin{proof}

We show first that, a.a.s, the image in $M_n$ of a closed geodesic in $Y_n$ is not homotopically trivial. Observe that for a short geodesic to be homotopically trivial after the compactification, it needs to go around at least two small cusps in $Y_n$. Indeed, if a geodesic homotopic to a point in $M_n$ goes around a single cusp, this would imply that, pre-compactification, it is homotopic to the cusp. But since it is a hyperbolic element, this cannot occur. On the other hand, both the cusps and the distance between them have to be of length $<l_{max}$, since the geodesic is so. 

If we look, then, at the path in the dual graph of $Y_n$ that this potentially homotopically trivial geodesic does, we see that it is a concatenation of cycles at uniformly bounded distance. Having this would imply that $G_{Y_n}$ is $l$-tangled, for $l\leq\frac{1}{8}\log_3(n)$. However, we know by \cite[Lemma 9]{Tangle-free} that this probability tends to 0 as $n\rightarrow\infty$. Therefore, we can conclude that a.a.s as $n \rightarrow \infty$, closed geodesics of this bounded length don't become homotopically trivial after the compactification.

Now, to treat the case of geodesics becomic homotopic to one another, we'll consider the doubles of the manifolds $DY_n$ and $DM_n$. As before, we argue by contradiction. Suppose there exist a set $A_n\in\Omega_n$ with $\lim_{n\rightarrow\infty}\mathbb{P}(A_n)>0$, for which the model $DY_n$ verifies the following: there exist two non-homotopic closed geodesics $\gamma_1$ and $\gamma_2$ in $DY_n$, such that their images $\phi(\gamma_1)$ and $\phi(\gamma_2)$ in $DM_n$ are homotopic to some other closed geodesic $\gamma$, of smaller length $l>0$ (or analogously, that one of them is homotopic to the other).

We take, then, the cover of $DM_n$ corresponding to that geodesic, that is, we consider:
\[\mathbb{H}^3 / \langle \gamma \rangle,\] 
where $\langle \gamma \rangle$ is an infinite cyclic subgroup of the fundamental group of $DM_n$, generated by the loxodromic transformation $\gamma$. This is diffeomorphic to $S^1 \times \mathbb{R}^2$, that is, a solid torus (see Figure \ref{cover}).

\begin{figure}[H]
    \centering
    \includegraphics[scale=0.4]{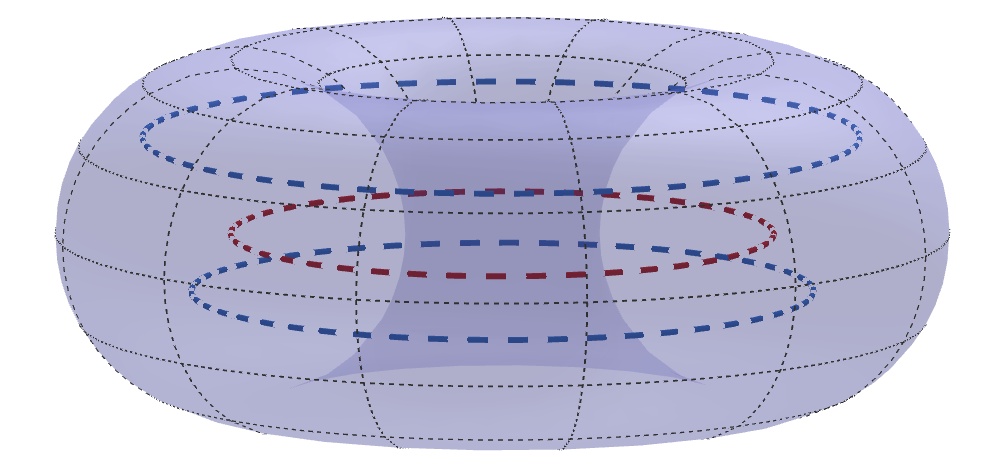}
    \caption{The curves $\tilde{\gamma}$ (red) and $\widetilde{\phi(\gamma_1)}$, $\widetilde{\phi(\gamma_2)}$ (blue) in the cover $\mathbb{H}^3 / \langle \gamma \rangle$.}
    \label{cover}
\end{figure}

We lift now the two curves $\phi(\gamma_1)$ and $\phi(\gamma_2)$ to the cover. Since $\phi(\gamma_1)$ and $\phi(\gamma_2)$ are homotopic to $\gamma$, their lifts $\widetilde{\phi(\gamma_1)}$ and $\widetilde{\phi(\gamma_2)}$ in this cover will also contract into $\tilde{\gamma}$. 

Once in this setting, we can use the fact that the geometry of this cover "flares out": the radius of the solid torus grows exponentially fast. On the other hand, we know, by Proposition \ref{length_gammas}, that the lengths of the images are very close to those in $DY_n$. Hence, the lengths of the lifts, although may increase, will continue to be finite.

All this implies that these curves cannot be too far from $\tilde{\gamma}$, or at least, they need to be at bounded distance from it. Thus, if we denote by $D_1$ the distance from $\tilde{\gamma}$ to $\widetilde{\phi(\gamma_1)}$, and $D_2$ the distance from $\tilde{\gamma}$ to $\widetilde{\phi(\gamma_2)}$, we have:
\[ d(\widetilde{\phi(\gamma_2)}, \widetilde{\phi(\gamma_2)}) \leq d(\widetilde{\phi(\gamma_1)}, \tilde{\gamma}) + d(\tilde{\gamma}, \widetilde{\phi(\gamma_2)}) \leq D_1 + D_2 = D. \]

We have obtained, then, that in the cover, the two curves $\widetilde{\phi(\gamma_1)}$ and $\widetilde{\phi(\gamma_2)}$ are at bounded distance $D$ from each other. But since distances in $DM_n$ are smaller or equal that distances in $\mathbb{H}^3 / \langle \gamma \rangle$, this yields that $\phi(\gamma_1)$ and $\phi(\gamma_2)$ are also at bounded distance in $DM_n$. And in particular, so are their paths in the dual graph $G_{DM_n}$.

However, drawing on graph theory tools, we have that in a random regular graph, the probability that there are two closed paths of lengths $l_1$ and $l_2$ at bounded distance $d>0$ apart, tends to 0 as the number of vertices goes to infinity \cite[Lemma 5.5]{BrooksMakover}. This, then, leads to a contradiction, proving that a.a.s this homotopy will not occur.

\end{proof}

\subsection{Theorem \ref{Poisson}}

Finally, with both Proposition \ref{length_gammas} and Lemma \ref{homotopy}, we are ready to prove our main result. For simplicity of the argument, we will use the same equivalent reformulation of the theorem as for Theorem \ref{PoissonYn}.

\begin{reptheorem}{Poisson}
For any finite collection of disjoint intervals $[a_1,b_1], \ldots, [a_t,b_t] \subset \R_{\geq 0}$, the random vector $(C_{[a_1,b_1]}(M_n), \ldots, C_{[a_t,b_t]}(M_n))$
converges jointly in distribution, as $n\rightarrow\infty$, to a vector of independent random variables $$(\mathcal{C}_{[a_1,b_1]}, \ldots, \mathcal{C}_{[a_t,b_t]}),$$ where for all $i=1,\ldots,t$, $\mathcal{C}_{[a_i,b_i]}$ is Poisson distributed with parameter 
\[\lambda = \sum_{[w]\in \mathcal{W}_{[a_i,b_i]}} \lambda_{[w]}.\]
\end{reptheorem}

\begin{proof}
Proposition \ref{length_gammas} states that their lengths in $Y_n$ and $M_n$ are comparable, for $n$ large enough. This, together with Lemma \ref{homotopy} yields that the number of short closed geodesics in $Y_n$ stays asymptotically the same after compactification. More precisely, given any $a,b\geq0$, the bounds obtained in the same Proposition \ref{length_gammas} imply that, for every $\epsilon>0$,
\[C_{[(1+\epsilon)a, \frac{b}{(1+\epsilon)}]}(Y_n) \leq C_{[a,b]}(M_n) \leq C_{[\frac{a}{(1+\epsilon)}, (1+\epsilon)b]}(Y_n).\]

Since by Theorem \ref{PoissonYn}, $C_{[a,b]}(Y_n)$ converge to a Poisson distributed random variable of parameter $\lambda = \sum_{[w]\in \mathcal{W}_{[a,b]}} \lambda_{[w]}$ as $n\rightarrow \infty$, we can conclude from the inequality above that the same asymptotic distribution of closed geodesics holds for the compact manifold $M_n$.
Moreover, again as a consequence of Theorem \ref{PoissonYn}, for any finite collection of disjoint intervals, these limiting random variables are independent. 
\end{proof}

\printbibliography

@article {Bram1,
    AUTHOR = {Petri, Bram},
     TITLE = {Random regular graphs and the systole of a random surface},
   JOURNAL = {J. Topol.},
  FJOURNAL = {Journal of Topology},
    VOLUME = {10},
      YEAR = {2017},
    NUMBER = {1},
     PAGES = {211--267},
      ISSN = {1753-8416},
   MRCLASS = {57M50 (05C80 53C22)},
  MRNUMBER = {3653067},
MRREVIEWER = {Dietrich Stoyan},
       DOI = {10.1112/topo.12005},
       URL = {https://doi.org/10.1112/topo.12005},
}

@article {Bollobas,
    AUTHOR = {Bollob\'{a}s, B\'{e}la},
     TITLE = {A probabilistic proof of an asymptotic formula for the number
              of labelled regular graphs},
   JOURNAL = {European J. Combin.},
  FJOURNAL = {European Journal of Combinatorics},
    VOLUME = {1},
      YEAR = {1980},
    NUMBER = {4},
     PAGES = {311--316},
      ISSN = {0195-6698},
   MRCLASS = {05C30 (05C38)},
  MRNUMBER = {595929},
MRREVIEWER = {Edward A. Bender},
       DOI = {10.1016/S0195-6698(80)80030-8},
       URL = {https://doi.org/10.1016/S0195-6698(80)80030-8},
}

@article {BrooksMakover,
    AUTHOR = {Brooks, Robert and Makover, Eran},
     TITLE = {Random construction of {R}iemann surfaces},
   JOURNAL = {J. Differential Geom.},
  FJOURNAL = {Journal of Differential Geometry},
    VOLUME = {68},
      YEAR = {2004},
    NUMBER = {1},
     PAGES = {121--157},
      ISSN = {0022-040X},
   MRCLASS = {57M50 (58J50)},
  MRNUMBER = {2152911},
MRREVIEWER = {Igor Rivin},
       URL = {http://projecteuclid.org/euclid.jdg/1102536712},
}

@book {Moments,
    AUTHOR = {Janson, Svante and \L uczak, Tomasz and Rucinski, Andrzej},
     TITLE = {Random graphs},
    SERIES = {Wiley-Interscience Series in Discrete Mathematics and
              Optimization},
 PUBLISHER = {Wiley-Interscience, New York},
      YEAR = {2000},
     PAGES = {xii+333},
      ISBN = {0-471-17541-2},
   MRCLASS = {05C80 (60C05 82B41)},
  MRNUMBER = {1782847},
MRREVIEWER = {Mark R. Jerrum},
       DOI = {10.1002/9781118032718},
       URL = {https://doi.org/10.1002/9781118032718},
}

@article {tetra,
    AUTHOR = {Dunfield, Nathan M. and Thurston, William P.},
     TITLE = {Finite covers of random 3-manifolds},
   JOURNAL = {Invent. Math.},
  FJOURNAL = {Inventiones Mathematicae},
    VOLUME = {166},
      YEAR = {2006},
    NUMBER = {3},
     PAGES = {457--521},
      ISSN = {0020-9910},
   MRCLASS = {57N10 (20F34 20F65 57M10 57M50)},
  MRNUMBER = {2257389},
MRREVIEWER = {Bruno P. Zimmermann},
       DOI = {10.1007/s00222-006-0001-6},
       URL = {https://doi.org/10.1007/s00222-006-0001-6},
}

@misc{Martelli,
  doi = {10.48550/ARXIV.1610.02592},
  url = {https://arxiv.org/abs/1610.02592},
  author = {Martelli, Bruno},
  keywords = {Geometric Topology (math.GT), Differential Geometry (math.DG), FOS: Mathematics, FOS: Mathematics},
  title = {An Introduction to Geometric Topology},
  publisher = {arXiv},
  year = {2016},
  copyright = {Creative Commons Attribution Non Commercial Share Alike 4.0 International}
}

@article {Bram_Jean,
    AUTHOR = {Petri, Bram and Raimbault, Jean},
     TITLE = {A model for random three-manifolds},
   JOURNAL = {Comment. Math. Helv.},
  FJOURNAL = {Commentarii Mathematici Helvetici. A Journal of the Swiss
              Mathematical Society},
    VOLUME = {97},
      YEAR = {2022},
    NUMBER = {4},
     PAGES = {729--768},
      ISSN = {0010-2571,1420-8946},
   MRCLASS = {57K32 (60C05)},
  MRNUMBER = {4527827},
       DOI = {10.4171/cmh/539},
       URL = {https://doi.org/10.4171/cmh/539},
}

@article {Futer_Purcell,
    AUTHOR = {Futer, David and Purcell, Jessica S. and Schleimer, Saul},
     TITLE = {Effective bilipschitz bounds on drilling and filling},
   JOURNAL = {Geom. Topol.},
  FJOURNAL = {Geometry \& Topology},
    VOLUME = {26},
      YEAR = {2022},
    NUMBER = {3},
     PAGES = {1077--1188},
      ISSN = {1465-3060},
   MRCLASS = {57K10 (30F40 57K32)},
  MRNUMBER = {4466646},
       DOI = {10.2140/gt.2022.26.1077},
       URL = {https://doi.org/10.2140/gt.2022.26.1077},
}

@article {Tangle-free,
    AUTHOR = {Bordenave, Charles},
     TITLE = {A new proof of {F}riedman's second eigenvalue theorem and its
              extension to random lifts},
   JOURNAL = {Ann. Sci. \'{E}c. Norm. Sup\'{e}r. (4)},
  FJOURNAL = {Annales Scientifiques de l'\'{E}cole Normale Sup\'{e}rieure. Quatri\`eme
              S\'{e}rie},
    VOLUME = {53},
      YEAR = {2020},
    NUMBER = {6},
     PAGES = {1393--1439},
      ISSN = {0012-9593},
   MRCLASS = {05C50 (05C80)},
  MRNUMBER = {4203039},
MRREVIEWER = {Maryam Mirzakhah},
       DOI = {10.24033/asens.2450},
       URL = {https://doi.org/10.24033/asens.2450},
}

@article {Moise,
    AUTHOR = {Moise, Edwin E.},
     TITLE = {Affine structures in {$3$}-manifolds. {V}. {T}he triangulation
              theorem and {H}auptvermutung},
   JOURNAL = {Ann. of Math. (2)},
  FJOURNAL = {Annals of Mathematics. Second Series},
    VOLUME = {56},
      YEAR = {1952},
     PAGES = {96--114},
      ISSN = {0003-486X},
   MRCLASS = {56.0X},
  MRNUMBER = {48805},
MRREVIEWER = {S.\ S.\ Cairns},
       DOI = {10.2307/1969769},
       URL = {https://doi.org/10.2307/1969769},
}

@book {Hyp_Kleinian,
    AUTHOR = {Matsuzaki, Katsuhiko and Taniguchi, Masahiko},
     TITLE = {Hyperbolic manifolds and {K}leinian groups},
    SERIES = {Oxford Mathematical Monographs},
      NOTE = {Oxford Science Publications},
 PUBLISHER = {The Clarendon Press, Oxford University Press, New York},
      YEAR = {1998},
     PAGES = {x+253},
      ISBN = {0-19-850062-9},
   MRCLASS = {30F40 (20H10 30F10)},
  MRNUMBER = {1638795},
MRREVIEWER = {I.\ Kra},
}

@article {Andreev,
    AUTHOR = {Roeder, Roland K. W. and Hubbard, John H. and Dunbar, William
              D.},
     TITLE = {Andreev's theorem on hyperbolic polyhedra},
   JOURNAL = {Ann. Inst. Fourier (Grenoble)},
  FJOURNAL = {Universit\'{e} de Grenoble. Annales de l'Institut Fourier},
    VOLUME = {57},
      YEAR = {2007},
    NUMBER = {3},
     PAGES = {825--882},
      ISSN = {0373-0956,1777-5310},
   MRCLASS = {51M10 (51F15 52B10 57M50)},
  MRNUMBER = {2336832},
MRREVIEWER = {Joan\ Porti},
       URL = {http://aif.cedram.org/item?id=AIF_2007__57_3_825_0},
}

@article {Thurston,
    AUTHOR = {Thurston, William P.},
     TITLE = {Three-dimensional manifolds, {K}leinian groups and hyperbolic
              geometry},
   JOURNAL = {Bull. Amer. Math. Soc. (N.S.)},
  FJOURNAL = {American Mathematical Society. Bulletin. New Series},
    VOLUME = {6},
      YEAR = {1982},
    NUMBER = {3},
     PAGES = {357--381},
      ISSN = {0273-0979,1088-9485},
   MRCLASS = {57N10 (20H15 30F40 57M35 57M40 57S17)},
  MRNUMBER = {648524},
MRREVIEWER = {Klaus\ Johannson},
       DOI = {10.1090/S0273-0979-1982-15003-0},
       URL = {https://doi.org/10.1090/S0273-0979-1982-15003-0},
}

@article {GabrieleVol,
    AUTHOR = {Viaggi, Gabriele},
     TITLE = {Volumes of random 3-manifolds},
   JOURNAL = {J. Topol.},
  FJOURNAL = {Journal of Topology},
    VOLUME = {14},
      YEAR = {2021},
    NUMBER = {2},
     PAGES = {504--537},
      ISSN = {1753-8416,1753-8424},
   MRCLASS = {57K32 (20P05 30F60)},
  MRNUMBER = {4286047},
MRREVIEWER = {Tsuyoshi\ Kobayashi},
       DOI = {10.1112/topo.12190},
       URL = {https://doi.org/10.1112/topo.12190},
}

@article {UrsulaLap,
    AUTHOR = {Hamenst\"{a}dt, Ursula and Viaggi, Gabriele},
     TITLE = {Small eigenvalues of random 3-manifolds},
   JOURNAL = {Trans. Amer. Math. Soc.},
  FJOURNAL = {Transactions of the American Mathematical Society},
    VOLUME = {375},
      YEAR = {2022},
    NUMBER = {6},
     PAGES = {3795--3840},
      ISSN = {0002-9947,1088-6850},
   MRCLASS = {58C40 (20P05 30F60)},
  MRNUMBER = {4419048},
MRREVIEWER = {Joe\ Thomas},
       DOI = {10.1090/tran/8564},
       URL = {https://doi.org/10.1090/tran/8564},
}

@misc{FellerDiam,
      title={Uniform models and short curves for random 3-manifolds}, 
      author={Peter Feller and Alessandro Sisto and Gabriele Viaggi},
      year={2022},
      eprint={1910.09486},
      archivePrefix={arXiv},
      primaryClass={math.GT}
}

@article {Mirzakhani,
    AUTHOR = {Mirzakhani, Maryam},
     TITLE = {Growth of {W}eil-{P}etersson volumes and random hyperbolic
              surfaces of large genus},
   JOURNAL = {J. Differential Geom.},
  FJOURNAL = {Journal of Differential Geometry},
    VOLUME = {94},
      YEAR = {2013},
    NUMBER = {2},
     PAGES = {267--300},
      ISSN = {0022-040X,1945-743X},
   MRCLASS = {32G15 (53C22 60B05)},
  MRNUMBER = {3080483},
MRREVIEWER = {Zongliang\ Sun},
       URL = {http://projecteuclid.org/euclid.jdg/1367438650},
}

@article {BudCurBram,
    AUTHOR = {Budzinski, Thomas and Curien, Nicolas and Petri, Bram},
     TITLE = {On the minimal diameter of closed hyperbolic surfaces},
   JOURNAL = {Duke Math. J.},
  FJOURNAL = {Duke Mathematical Journal},
    VOLUME = {170},
      YEAR = {2021},
    NUMBER = {2},
     PAGES = {365--377},
      ISSN = {0012-7094,1547-7398},
   MRCLASS = {05C80 (30F10 57M15)},
  MRNUMBER = {4202496},
MRREVIEWER = {Biplab\ Basak},
       DOI = {10.1215/00127094-2020-0083},
       URL = {https://doi.org/10.1215/00127094-2020-0083},
}

@article {MageeNaudPuder,
    AUTHOR = {Magee, Michael and Naud, Fr\'{e}d\'{e}ric and Puder, Doron},
     TITLE = {A random cover of a compact hyperbolic surface has relative
              spectral gap {$\frac{3}{16}-\varepsilon$}},
   JOURNAL = {Geom. Funct. Anal.},
  FJOURNAL = {Geometric and Functional Analysis},
    VOLUME = {32},
      YEAR = {2022},
    NUMBER = {3},
     PAGES = {595--661},
      ISSN = {1016-443X,1420-8970},
   MRCLASS = {58J50 (05C50 32G15)},
  MRNUMBER = {4431124},
MRREVIEWER = {Sugata\ Mondal},
       DOI = {10.1007/s00039-022-00602-x},
       URL = {https://doi.org/10.1007/s00039-022-00602-x},
}

@article {ParlierHypGeo,
    AUTHOR = {Guth, Larry and Parlier, Hugo and Young, Robert},
     TITLE = {Pants decompositions of random surfaces},
   JOURNAL = {Geom. Funct. Anal.},
  FJOURNAL = {Geometric and Functional Analysis},
    VOLUME = {21},
      YEAR = {2011},
    NUMBER = {5},
     PAGES = {1069--1090},
      ISSN = {1016-443X,1420-8970},
   MRCLASS = {32G15 (57M50 60C05)},
  MRNUMBER = {2846383},
MRREVIEWER = {Lee-Peng\ Teo},
       DOI = {10.1007/s00039-011-0131-x},
       URL = {https://doi.org/10.1007/s00039-011-0131-x},
}

@article {LubotzkyHypGeo,
    AUTHOR = {Lubotzky, Alexander and Maher, Joseph and Wu, Conan},
     TITLE = {Random methods in 3-manifold theory},
   JOURNAL = {Tr. Mat. Inst. Steklova},
  FJOURNAL = {Trudy Matematicheskogo Instituta Imeni V. A. Steklova},
    VOLUME = {292},
      YEAR = {2016},
     PAGES = {124--148},
      ISSN = {0371-9685},
      ISBN = {5-7846-0137-7; 978-5-7846-0137-7},
   MRCLASS = {20F34 (20F67 20P05 57N10)},
  MRNUMBER = {3628457},
MRREVIEWER = {Thilo\ Kuessner},
       DOI = {10.1134/S0371968516010088},
       URL = {https://doi.org/10.1134/S0371968516010088},
}

@article {MinskyModel,
    AUTHOR = {Minsky, Yair},
     TITLE = {The classification of {K}leinian surface groups. {I}. {M}odels
              and bounds},
   JOURNAL = {Ann. of Math. (2)},
  FJOURNAL = {Annals of Mathematics. Second Series},
    VOLUME = {171},
      YEAR = {2010},
    NUMBER = {1},
     PAGES = {1--107},
      ISSN = {0003-486X,1939-8980},
   MRCLASS = {30F40 (20H10 57M50)},
  MRNUMBER = {2630036},
MRREVIEWER = {Athanase\ Papadopoulos},
       DOI = {10.4007/annals.2010.171.1},
       URL = {https://doi.org/10.4007/annals.2010.171.1},
}

@article {BrockCanaryMinskyModel,
    AUTHOR = {Brock, Jeffrey F. and Canary, Richard D. and Minsky, Yair N.},
     TITLE = {The classification of {K}leinian surface groups, {II}: {T}he
              ending lamination conjecture},
   JOURNAL = {Ann. of Math. (2)},
  FJOURNAL = {Annals of Mathematics. Second Series},
    VOLUME = {176},
      YEAR = {2012},
    NUMBER = {1},
     PAGES = {1--149},
      ISSN = {0003-486X,1939-8980},
   MRCLASS = {57M50 (30F40)},
  MRNUMBER = {2925381},
MRREVIEWER = {Athanase\ Papadopoulos},
       DOI = {10.4007/annals.2012.176.1.1},
       URL = {https://doi.org/10.4007/annals.2012.176.1.1},
}

@article {HideMagee,
    AUTHOR = {Hide, Will and Magee, Michael},
     TITLE = {Near optimal spectral gaps for hyperbolic surfaces},
   JOURNAL = {Ann. of Math. (2)},
  FJOURNAL = {Annals of Mathematics. Second Series},
    VOLUME = {198},
      YEAR = {2023},
    NUMBER = {2},
     PAGES = {791--824},
      ISSN = {0003-486X,1939-8980},
   MRCLASS = {58J50 (05C50 05C80)},
  MRNUMBER = {4635304},
       DOI = {10.4007/annals.2023.198.2.6},
       URL = {https://doi.org/10.4007/annals.2023.198.2.6},
}

@misc{AnantharamanMonk,
      title={Friedman-Ramanujan functions in random hyperbolic geometry and application to spectral gaps}, 
      author={Nalini Anantharaman and Laura Monk},
      year={2023},
      eprint={2304.02678},
      archivePrefix={arXiv},
      primaryClass={math.SP}
}

@misc{LipnowskiWright,
      title={Towards optimal spectral gaps in large genus}, 
      author={Michael Lipnowski and Alex Wright},
      year={2023},
      eprint={2103.07496},
      archivePrefix={arXiv},
      primaryClass={math.GT}
}

@article {WuXue,
    AUTHOR = {Wu, Yunhui and Xue, Yuhao},
     TITLE = {Random hyperbolic surfaces of large genus have first
              eigenvalues greater than {$\frac{3}{16}-\epsilon$}},
   JOURNAL = {Geom. Funct. Anal.},
  FJOURNAL = {Geometric and Functional Analysis},
    VOLUME = {32},
      YEAR = {2022},
    NUMBER = {2},
     PAGES = {340--410},
      ISSN = {1016-443X,1420-8970},
   MRCLASS = {32G15 (58C40)},
  MRNUMBER = {4408435},
MRREVIEWER = {Joe\ Thomas},
       DOI = {10.1007/s00039-022-00595-7},
       URL = {https://doi.org/10.1007/s00039-022-00595-7},
}

@misc{MageeThomas,
      title={Strongly convergent unitary representations of right-angled Artin groups}, 
      author={Michael Magee and Joe Thomas},
      year={2023},
      eprint={2308.00863},
      archivePrefix={arXiv},
      primaryClass={math.GR}
}

@article {SurfGroupThm,
    AUTHOR = {Kahn, Jeremy and Markovic, Vladimir},
     TITLE = {Immersing almost geodesic surfaces in a closed hyperbolic
              three manifold},
   JOURNAL = {Ann. of Math. (2)},
  FJOURNAL = {Annals of Mathematics. Second Series},
    VOLUME = {175},
      YEAR = {2012},
    NUMBER = {3},
     PAGES = {1127--1190},
      ISSN = {0003-486X,1939-8980},
   MRCLASS = {57M50 (30F40)},
  MRNUMBER = {2912704},
MRREVIEWER = {James\ W.\ Anderson},
       DOI = {10.4007/annals.2012.175.3.4},
       URL = {https://doi.org/10.4007/annals.2012.175.3.4},
}

@article {HakenAgol,
    AUTHOR = {Agol, Ian},
     TITLE = {The virtual {H}aken conjecture},
      NOTE = {With an appendix by Agol, Daniel Groves, and Jason Manning},
   JOURNAL = {Doc. Math.},
  FJOURNAL = {Documenta Mathematica},
    VOLUME = {18},
      YEAR = {2013},
     PAGES = {1045--1087},
      ISSN = {1431-0635,1431-0643},
   MRCLASS = {20F67 (57Mxx)},
  MRNUMBER = {3104553},
MRREVIEWER = {Thomas\ Koberda},
       URL = {https://elibm.org/article/10000267},
}

@article {PoisAprox,
    AUTHOR = {Arratia, R. and Goldstein, L. and Gordon, L.},
     TITLE = {Two moments suffice for {P}oisson approximations: the
              {C}hen-{S}tein method},
   JOURNAL = {Ann. Probab.},
  FJOURNAL = {The Annals of Probability},
    VOLUME = {17},
      YEAR = {1989},
    NUMBER = {1},
     PAGES = {9--25},
      ISSN = {0091-1798,2168-894X},
   MRCLASS = {60F05},
  MRNUMBER = {972770},
MRREVIEWER = {Andrew\ D.\ Barbour},
       URL =
              {http://links.jstor.org/sici?sici=0091-1798(198901)17:1<9:TMSFPA>2.0.CO;2-X&origin=MSN},
}

@proceedings {SteinMethod,
     TITLE = {Stein's method and applications},
    SERIES = {Lecture Notes Series. Institute for Mathematical Sciences.
              National University of Singapore},
    VOLUME = {5},
 BOOKTITLE = {Proceedings of the workshop held in {S}ingapore, {J}uly
              28--{A}ugust 31, 2003},
    EDITOR = {Barbour, A. D. and Chen, Louis H. Y.},
 PUBLISHER = {Singapore University Press, Singapore; and World Scientific
              Publishing Co. Pte. Ltd., Hackensack, NJ},
      YEAR = {2005},
     PAGES = {xx+297},
      ISBN = {981-256-281-8},
   MRCLASS = {60-06},
  MRNUMBER = {2201882},
       DOI = {10.1142/9789812567673},
       URL = {https://doi.org/10.1142/9789812567673},
}

@article {DistMargulistube,
    AUTHOR = {Futer, David and Purcell, Jessica S. and Schleimer, Saul},
     TITLE = {Effective distance between nested {M}argulis tubes},
   JOURNAL = {Trans. Amer. Math. Soc.},
  FJOURNAL = {Transactions of the American Mathematical Society},
    VOLUME = {372},
      YEAR = {2019},
    NUMBER = {6},
     PAGES = {4211--4237},
      ISSN = {0002-9947,1088-6850},
   MRCLASS = {57M50 (30F40)},
  MRNUMBER = {4009389},
MRREVIEWER = {Jason\ Todd\ Callahan},
       DOI = {10.1090/tran/7678},
       URL = {https://doi.org/10.1090/tran/7678},
}

\end{document}